\theoremstyle{plain}
\newtheorem{thm}[subsection]{Theorem}
\newtheorem{proposition}[subsection]{Proposition}
\newtheorem{lemma}[subsection]{Lemma}
\newtheorem{corollary}[subsection]{Corollary}
\newtheorem{ques}[subsection]{Question}
\theoremstyle{definition}
\theoremstyle{remark}
\newtheorem{remark}[subsection]{Remark}
\newtheorem{notation}[subsection]{Notation}
\DeclareMathOperator{\nSm}{nSm}
\DeclareMathOperator{\ord}{ord}
\DeclareMathOperator{\HS}{HS}
\newcommand{\Spec}{\mathop{\mathrm{Spec}}}
\newcommand{\Spf}{\mathop{\mathsf{Spf}}}
\newcommand{\Sch}{\mathop{\mathrm{Sch}}}
\newcommand{\Alg}{\mathop{\mathrm{Alg}}}
\newcommand{\arc}{\mathscr{L}_{\infty}}
\newcommand{\SingLoc}[1]{\nSm(#1)}
\newcommand{\Aff}{\mathbb{A}}
\newcommand{\Hom}{\mathrm{Hom}}
\newcommand{\Id}{\mathrm{Id}}
\newcommand{\Frac}{\mathrm{Frac}}
\newcommand{\Ker}{\mathrm{Ker}}
\newcommand{\acc}[2]{\left\langle #1\, ,\,#2 \right\rangle} 
\newcommand{\inv}{\times} 
\newcommand{\sumu}[1]{\underset{#1}{\sum}}
\newcommand{\sumsubu}[1]{\sumu{\substack{#1}}}
\newcommand{\eg}{{\it e.g.}\ }
\newcommand{\ie}{{\it i.e.}\ }
\newcommand{\opcit}{{\it op.cit.}}
\newcommand{\wt}{\widetilde}
\newcommand{\eps}{\varepsilon}
\let\phi\varphi
\newcommand{\ul}{\boldsymbol}
\newcommand{\dblbr}[1]{\llbracket #1 \rrbracket}
\newcommand{\vart}{t}
\newcommand{\numberset}[1]{\mathbb{#1}}
\newcommand{\torus}{\mathcal{T}}
\newcommand{\Lacp}{\mathfrak{LcCpl}}
\newcommand{\mds}{\mathcal{N}}
\newcommand\compl[1]{\widehat{#1}}
\newcommand{\diso}[1]{\{#1\neq 0\}}
\newcommand{\ann}[1]{\{#1=0\}}
\newcommand{\Fdis}[1]{F_{\bell_#1}} 
\newcommand{\Fdisu}[2]{F_{\bell_#1,#2}}
\newcommand{\struc}[1]{\mathcal{O}(#1)}
\newcommand{\fkh}{\mathfrak{h}}
\newcommand{\fkg}{\mathfrak{g}}
\newcommand{\fka}{\mathfrak{a}}
\newcommand{\fkb}{\mathfrak{b}}
\newcommand{\fkc}{\mathfrak{c}}
\newcommand{\fki}{\mathfrak{i}}
\newcommand{\fkj}{\mathfrak{j}}
\newcommand{\fkm}{\mathfrak{m}}
\newcommand{\fkM}{\mathfrak{M}}
\newcommand{\fkX}{\mathfrak{X}}
\newcommand{\cY}{\mathcal Y}
\newcommand{\cW}{\mathcal W}
\newcommand{\cZ}{\mathcal Z}
\newcommand{\cC}{\mathcal C}
\newcommand{\cA}{\mathcal A}
\newcommand{\cB}{\mathcal B}
\newcommand{\uX}{\ul{X}}
\newcommand{\uY}{\ul{Y}}
\newcommand{\uy}{\ul{y}}
\newcommand{\uJ}{\ul{J}}
\newcommand{\uK}{\ul{K}}
\newcommand{\uF}{\ul{F}}
\newcommand{\uH}{\ul{H}}
\newcommand{\uz}{\ul{z}}
\newcommand{\uw}{\ul{w}}
\newcommand{\uZ}{\ul{Z}}
\newcommand{\bn}{\ul{n}}
\newcommand{\br}{\ul{r}}
\newcommand{\bm}{\ul{m}}
\newcommand{\bell}{\ul{\ell}}
\newcommand{\be}{\ul{e}}
\newcommand{\cuY}{\ul{\cY}}
\newcommand{\oX}[1]{\ide{\uX}}
\newcommand{\oXY}[1]{\ide{\uX}}
\newcommand{\dbT}{\dblbr{\vart}}
\newcommand{\brT}{[\vart]}
\newcommand{\AX}{A[\uX]}
\newcommand{\AXY}{A[\uX,\uY]}
\newcommand{\KdX}{K\dblbr{\uX}}
\newcommand{\ide}[1]{\langle #1 \rangle}
\newcommand{\N}{\numberset{N}}
\newcommand{\Z}{\numberset{Z}}
\newcommand{\R}{\numberset{R}}
\title[Formal neighborhoods in arc scheme of toric singularities]{On the
behavior of formal neighborhoods in the Nash sets associated with
toric valuations: a comparison theorem}
\author{David Bourqui}
\address{Institut de recherche math\'ematique de Rennes\\ UMR 6625 du CNRS\\ Universit\'e de Rennes 1\\
Campus de Beaulieu\\
35042 Rennes cedex (France)}
\email{david.bourqui@univ-rennes1.fr}
\author{Mario Morán Cañón}
\address{Department of Mathematics\\ University of Oklahoma\\
601 Elm Avenue, Room 423\\
Norman, OK 73019 (USA)}
\email{mariomc@ou.edu}
\author{Julien Sebag}
\address{Institut de recherche math\'ematique de Rennes\\ UMR 6625 du CNRS\\ Universit\'e de Rennes 1\\
Campus de Beaulieu\\
35042 Rennes cedex (France)}
\email{julien.sebag@univ-rennes1.fr}
\subjclass[2010]{14B20, 14E18, 14M25}
\keywords{Arc schemes, formal neighborhoods, toric varieties}
\begin{document}

\maketitle

\begin{abstract}
We show that there exists a strong connection
between the generic formal neighborhood at a rational arc lying in the
Nash set associated with a toric divisorial valuation on a toric
variety and the formal neighborhood at the generic point of the same
Nash set. This may be interpreted as the fact that analytically along such a Nash set
the arc scheme of a toric variety is a product of a finite dimensional
singularity and an infinite dimensional affine space. 
\end{abstract}

\section{Introduction}\label{sec:introduction}

\subsection{}
This work establishes, in the case of toric singularities, a
comparison result between two kinds of formal neighborhoods in
arc schemes, which until now had been studied independently, in
particular in terms of motivations and involved techniques.

\subsection{}
The first class of formal neighborhoods we shall consider
are those of some finite-codimensional points of the arc scheme known
as stable points. Their study was motivated by the infamous Nash
problem (\cite{Nash}), which, loosely speaking, may be understood
as the problem of describing the connection between the resolution of
the singularities of a variety and the geometry of the arc space
associated with the variety.
An approach to this problem for surfaces was proposed
by Lejeune-Jalabert in terms of a problem of lifting of wedges (\cite{Lej:arcsan}).
Then Reguera extended the approach in higher dimensions,
putting forward the relevance of the study of the formal neighborhoods of the generic points of the
so-called Nash sets associated with divisorial valuations, which are
the prototypical examples of stable points (\cite{Reg:CSL,Reguera}).
She showed in particular that these formal neighborhoods are noetherian
(later in \cite{deFernex-Docampo}, a new proof of the result, as well as a
proof of the converse statement, were given), allowing her to establish a curve selection lemma for
arc spaces which was crucially used in subsequent works on the Nash problem
(\cite{MR2979864,deFDoc:terminal,MR2905305}).
She pointed out that there should exist a strong connection between
the algebraic properties of the formal neighborhood of the generic
point of the Nash set associated with a valuation and the geometric
properties and invariants of the valuation (see \eg \cite[Corollary 5.12]{Reguera}).

Later in \cite{MouReg} Mourtada and Reguera showed that the embedding
dimension of such a formal neighborhood equals the Mather discrepancy of
the valuation ; they also pointed out the interest and the difficulty of
computing the dimension of these formal neighborhoods.

\subsection{}
The second kind of formal neighborhoods we are interested in are those of
non-degenerate {\em rational} arcs (here non-degenerate means not entirely contained in the singular locus of the
variety).  Contrarily to the aforementioned case of stable points, these
neighborhoods are not noetherian in general. Yet Grinberg and Kazdhan (when the
base field is the field of complex numbers, \cite{GK}) and then
Drinfeld (over any field, \cite{DGKv1}) established a striking
finiteness property in this context, showing that any formal
neighborhood of this kind may be written as the product of an infinite
smooth factor (\ie countably many copies of the formal disk) with the
formal neighborhood of a rational point of a scheme of finite type (a
"finite formal model" of the arc under consideration).  The work of
Drinfeld, Grinberg and Kazdhan was motivated by geometric
representation theory and Langlands program; subsequent works in this
direction include \cite{BouNgoSak,BouCohArc,Ngo:Wei}.  On the other
hand, the first and last named authors of the present paper suggested
in \cite{BouSeb:confluentes} that the finite formal models of a
rational arc could be interpreted in terms of the singularity at the
origin of the arc.  In \cite{BouSeb:smooth:arcs} they showed that the
origin of the arc is smooth if and only if it admits a trivial finite
formal model, supporting the fact that the finite formal models should
provide a sensible measure of the complexity of the involved
singularities. In \cite{BS-IJM} the case of monomial plane curves
singularities is explored whereas in \cite{Bou-Seb:toric} finite
formal models of rational arcs on toric varieties are studied.
Here it is also worth mentioning the recent work
\cite{ChDeFDoc:embcodim} of Chiu, de Fernex and Docampo showing in
particular that the non-degeneracy condition in Drinfeld-Grinberg-Kazdhan theorem is
also necessary (previous related works are \cite{BouSeb:false} and \cite{ChHa:degen}.

\subsection{} Summing up, the two classes of formal neighborhoods (non-degenerate
stable points / rational arcs) on the arc space of an algebraic
variety share two common features:
a finiteness property and interesting (and potential in full generality) connections
with the singularities of the variety. Yet for each class the
manifestation of these properties seems to take rather different forms.
It is natural to ask whether the phenomena observed for both classes
are the emanation of a common mechanism.

The aforementioned work \cite{Bou-Seb:toric} provides the first piece
of evidence that there may exist a strong connection between the two
classes of formal neighborhoods, at least in the case of toric
singularities. On one hand, the formula obtained there for
the embedding dimension of the minimal formal model of a
non-degenerate arc on a toric variety (Theorem 1.4 of \opcit)
presents a striking similarity with the one obtained more generally by Reguera and Mourtada in the
context of stable points (\cite[Theorem 3.4]{MouReg}).
On the other hand, the fact that the formal neighborhood of a generic
rational arc of the Nash set associated with a toric valuation is
constant (\cite[Theorem 1.3]{Bou-Seb:toric}) makes it senseful to ask
whether such a formal neighborhood can be compared with the formal
neighborhood of the generic point of the Nash set; the question was 
explicitly raised in \cite[Question 7.13]{Bou-Seb:Nash}.

\subsection{}
In the present work, we answer positively the question, thus establishing for the first time 
a strong direct connection between the two important classes of formal
neighborhood discussed before. More precisely, we show the following result.

\begin{thm}
\label{thm:main2}
Let $k$ be a field of characteristic zero. Let $V$ be an affine
normal toric $k$-variety. Let $v$ be a toric divisorial valuation on
$V$ and $\mds_v$ be the associated Nash set.
Let $\eta_v$ be the generic point of $\mds_v$ and $\kappa_v$ be the
residue field of $\eta_v$.

For a general $k$-rational arc $\alpha\in \mds_v$ there exists an
isomorphism of $\kappa_v$-formal schemes between
$\compl{\arc(V)_{\eta_v}}\hat\otimes_{\kappa_v}\kappa_v\dblbr{(T_i)_{i\in\N}}$
and $\compl{\arc(V)_{\alpha}}\hat\otimes_k \kappa_v$.
\end{thm}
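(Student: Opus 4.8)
The plan is to exploit the explicit combinatorial description of both formal neighborhoods afforded by toric geometry, and to show that they are governed by the \emph{same} finite-dimensional data. The starting point is the structure of $\arc(V)$ for an affine normal toric variety $V=V_\sigma$ associated to a cone $\sigma$ in a lattice $N$: an arc $\alpha$ corresponds to a semigroup homomorphism from $\sigma^\vee\cap M$ to $k\dblbr{\vart}\setminus\{0\}$ (on the non-degenerate locus), and the order-of-vanishing vector of $\alpha$ determines a lattice point $\bn\in\sigma\cap N$; the toric divisorial valuation $v$ corresponds to a primitive vector $\bn_v$, and $\mds_v$ is (the closure of) the locus of arcs whose order vector is a positive multiple of $\bn_v$. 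First I would recall from \cite{Bou-Seb:toric} the presentation of $\compl{\arc(V)_{\alpha}}$ and of $\compl{\arc(V)_{\eta_v}}$: after choosing generators of $\sigma^\vee\cap M$ and writing each coordinate of the arc as a power series with prescribed leading order, one obtains an explicit (infinite) system of equations in the coefficients. The key point, already implicit in \cite[Theorem 1.3]{Bou-Seb:toric}, is that the equations cutting out $\mds_v$ near such an arc only constrain finitely many of the "low-order" coefficients, the remaining ones being free.

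The second step is to make the splitting into a finite part and an infinite smooth part uniform in the arc. For a general $k$-rational $\alpha\in\mds_v$ — general meaning the leading coefficients of the coordinate power series avoid a proper closed subset (so that the arc is as non-degenerate as possible along $\mds_v$, e.g.\ its origin lies on the most generic torus orbit compatible with membership in $\mds_v$) — I expect the finite formal model of $\alpha$, in the sense of Drinfeld--Grinberg--Kazhdan, to be \emph{isomorphic over $\kappa_v$} (after the base change $k\to\kappa_v$ implicit in the statement) to the finite formal model extracted from $\compl{\arc(V)_{\eta_v}}$. Concretely: the generic point $\eta_v$ is itself a (non-rational) point of $\mds_v$, and specializing it to a general rational point should not change the isomorphism type of the finite-dimensional singularity, only rigidify the field of definition. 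So the plan is (i) extract from $\compl{\arc(V)_{\eta_v}}$, via a Drinfeld-type factorization, an isomorphism $\compl{\arc(V)_{\eta_v}}\isom \mathfrak{S}\,\hat\otimes_{\kappa_v}\kappa_v\dblbr{(T_i)_{i\in\N}}$ with $\mathfrak{S}$ the formal neighborhood of a $\kappa_v$-rational point of a finite-type $\kappa_v$-scheme; (ii) perform the analogous factorization for $\compl{\arc(V)_{\alpha}}\hat\otimes_k\kappa_v$, obtaining $\mathfrak{S}_\alpha\,\hat\otimes\,\kappa_v\dblbr{(T_i)_{i\in\N}}$; (iii) identify $\mathfrak{S}_\alpha$ with $\mathfrak{S}$ for general $\alpha$ by comparing the defining equations, which in both cases are the toric equations truncated at the same finite level determined by $\bn_v$; (iv) reassemble, using that countably many copies of $\kappa_v\dblbr{\vart}$ absorb any finite discrepancy in the number of smooth factors (a standard "$\aleph_0+n=\aleph_0$" argument with formal disks, already used in this circle of ideas).

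I would organize the core of the argument around the \emph{deformation-to-the-generic-point} picture: consider the family over $\mds_v$ whose fiber at a point $\beta$ is $\compl{\arc(V)_\beta}$; toric combinatorics should show this family is, after removing a proper closed subset of the base, \emph{locally trivial} in the relevant sense (its finite formal model is a constant finite-type scheme over that open locus), and then the fiber over a general rational point is obtained from the fiber over the generic point by the base change $\kappa_v\hookleftarrow k$ followed by $\mathrm{Spec}$ of $\kappa_v\to\kappa(\beta)=k$. The main obstacle I anticipate is precisely step (iii): controlling how the truncated toric equations behave under specialization of the leading coefficients from $\kappa_v$ to general elements of $k$, i.e.\ proving that the finite-dimensional singularity $\mathfrak{S}$ does not jump for general $\alpha$. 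This requires a careful choice of the "general" condition — I would phrase it as avoiding the vanishing loci of the finitely many polynomials (discriminants, and the leading coefficients of the monomial coordinates) that appear when triangularizing the system — and then invoke generic flatness together with the explicit nature of the equations to conclude that the finite formal model is constant on the resulting open subscheme of $\mds_v$. The rest (the absorption of surplus smooth factors, the passage between $\hat\otimes_k\kappa_v$ and $\hat\otimes_{\kappa_v}$, and the bookkeeping of the countable index set) I expect to be formal, following the now-standard manipulations with the Drinfeld--Grinberg--Kazhdan factorization recalled above.
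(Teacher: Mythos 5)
Your proposal has a genuine gap, in two places. First, step (i) cannot work as stated: by Reguera's theorem the formal neighborhood $\compl{\arc(V)_{\eta_v}}$ of the stable point $\eta_v$ is \emph{noetherian}, so it admits no Drinfeld--Grinberg--Kazhdan-type factorization with a factor $\kappa_v\dblbr{(T_i)_{i\in\N}}$; the content of the theorem is rather that this noetherian ring is itself (up to base change to $\kappa_v$) the finite formal model of a general rational arc, and the infinite smooth factor lives only on the $\compl{\arc(V)_{\alpha}}$ side, so there is also no surplus of smooth factors to absorb in your step (iv). Moreover, no deformation-theoretic interpretation of $\compl{\arc(V)_{\eta_v}}$ analogous to Drinfeld's for rational arcs is available at a stable point, so the ``standard DGK manipulations'' you invoke cannot even be set up there; this is exactly the obstruction the paper has to work around.

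Second, the step you rightly single out as the main obstacle, (iii), is not resolved by genericity or generic flatness along $\mds_v$. The difficulty is not how the fiber varies as $\alpha$ moves in $\mds_v$ (the paper simply takes $\alpha$ in the explicit open set $\arc^{\circ}(V_{\sigma})_{\bn}$, where the formal neighborhood is constant by the earlier toric result), but how to relate the presentation at $\eta_v$ to the presentation of the finite model $\cW(\bn)$ at all. This requires two substantive inputs missing from your sketch: (a) an explicit presentation of $\compl{\arc(V)_{\eta_v}}$, obtained via a Hensel's lemma in infinitely many variables that solves the equations $F_{\bell_q,s}$ for all high-order coefficients of the dependent coordinates $Z_q$, $q>d$, as elements of $\kappa_v\dblbr{\uZ_{\bullet,<\acc{\bn}{\bm_i}}}$; and (b) a comparison showing that the ideal generated by the coefficients of the binomial equations evaluated at the resulting series (whose tails are the complicated Hensel solutions) agrees, after an automorphism of the power series ring, with the ideal obtained by replacing each tail by the single monomial $\vart^{\acc{\bn}{\bm_i}}$ --- which is precisely the ideal defining $\cW(\bn)$. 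Point (b) is proved by Weierstrass preparation (writing each coordinate series as a Weierstrass polynomial times a unit) combined with the uniqueness of Weierstrass factorization and division, and it uses the binomial shape of the toric equations in an essential way; it is an identity of ideals in a fixed power series ring over $\kappa_v$, not a constancy-in-families statement, so it cannot be deduced from generic flatness or from specializing $\eta_v$ to a general rational point.
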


Geometrically, this might be interpreted as the fact that $\arc(V)$ is
analytically along $\mds_v$ a product of a finite dimensional
singularity and an infinite dimensional affine space.

\subsection{}
We stress that theorem \ref{thm:main2} is a direct consequence of
theorem \ref{thm:comparison} stated and proved below, which is our main statement and gives a more precise result. In
particular, it provides an explicit description of the formal
neighborhood of the generic point of the Nash
set associated with a toric valuation in terms of a noetherian
formal scheme associated with the same valuation, which
had been introduced in the previous work \cite{Bou-Seb:toric}. 
Some geometric consequences of this description are explained below. 

\subsection{}
We now describe the strategy and techniques involved in the proof of
theorem \ref{thm:comparison}. First we insist on the fact that
although the results of \cite{Bou-Seb:toric} inspired the present
work, there only the case of formal neighborhood of rational arcs was
treated. The main point of the present article is to show that the result
obtained there agrees, in some sense, with the one obtained for
generic points of toric Nash sets. It is important to note that
the techniques in \opcit\ can not be directly imported.
The reason is that one of the crucial ingredients in \opcit\ is the interpretation of
the formal neighborhood of a rational arc as a parameter space for the
infinitesimal deformations of the arc, allowing in particular to take
full advantage of the Weierstrass preparation and division theorems.
This point of view, already used by Drinfeld in \cite{DGKv1}, provides a very
efficient tool in the context of rational arcs, but to the best of our
knowledge no sensible analog exists in the context of stable points,
and this constitutes a major technical issue for the proof of the
comparison theorem.

The important steps in the proof of our result are the following.
\begin{enumerate}
\item As a preliminary step, we obtain  
  a sensible presentation of the formal neighborhood of the
  generic point of a toric Nash set; this is done in section
  \ref{sec:technical:formal:neighbourhood} in an ``abstract'' way and
  some extra work is needed in section \ref{sec:toric:varieties} in
  order to  apply the results to the toric context.
  This kind of computation was considered
  more generally by Reguera in \cite{Reguera,Reg:coo} and Reguera and Mourtada in \cite{MouReg}.
  However, the results as given there are not adapted to our purposes, 
  and our approach, based on a direct application of a version of the
  Hensel's lemma for an infinite set of variables, is somewhat
  different. We hope that this kind of approach may shed
  some new light on the computation of formal neighborhoods of stable points.
\item We compare the obtained presentation with to the one obtained in \cite{Bou-Seb:toric} for
rational arcs. The basic idea is that despite no interpretation in
terms of deformations of the presentation exists, we are still able to
transform it using the Weierstrass factorization and division theorems
in a rather intricate way, ending up with the conclusion that in some
sense it coincides with the finite model obtained in \opcit. The
technical core of the argument is in section
\ref{sec:technical:comparison}, the results of which are applied in
section \ref{sec:toric:varieties} to the toric context.
\end{enumerate}

\subsection{} As a direct by-product of our main result, we obtain 
a formula for the dimension and the number of irreducible components
of the formal neighborhood of the generic point of a Nash toric set
(see corollary \ref{coro:dim:irrcomp} for the precise statement).

This in turn implies the following statement.
Recall that a
  divisorial valuation $v$ on an algebraic variety $V$ is {\em essential} 
  if for every resolution $W\to V$ of the singularities of $V$, the center of $v$ on $W$ is an
    irreducible component of the exceptional locus of the resolution.
Here we say that such a divisorial valuation is {\em strongly essential} 
if for every resolution $W\to V$ of the singularities of $V$, the center of $v$ on $W$ is an
  irreducible component {\em of codimension $1$} of the exceptional locus of the resolution.

\begin{thm}\label{theo:strong:essential}
Let $V$ be a toric variety and $v$ a divisorial toric valuation on
$V$, centered in the singular locus of $V$. Let $\eta_v$ be the
generic point of the Nash set associated with $v$.
Then the formal neighborhood $\compl{\arc(V)_{\eta_v}}$
of $\eta_v$ in the arc scheme  $\arc(V)$ associated with $v$
is of dimension $1$ if and only if $v$ is strongly essential.
Moreover, in this case, $\compl{\arc(V)_{\eta_v}}$ is
irreducible and the associated reduced formal scheme is a formal disk.
\end{thm}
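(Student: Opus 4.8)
The plan is to read off the statement from the explicit description of $\compl{\arc(V)_{\eta_v}}$ given by our main comparison theorem, and then to reconcile the resulting combinatorial condition with the one characterizing strongly essential toric divisorial valuations.

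First I would set up the toric data: write $V=X_\sigma$ for a strongly convex rational polyhedral cone $\sigma\subseteq N_\R$ and let $w\in N\cap\sigma$ be the primitive vector with $v=v_w$; the hypothesis that $v$ is centered in $\SingLoc{V}$ means that $w$ lies in the relative interior of a non-regular face of $\sigma$. By Corollary \ref{coro:dim:irrcomp} --- which itself is extracted, through Theorem \ref{thm:comparison}, from the finite formal model of a general rational arc of $\mds_v$ computed in \cite{Bou-Seb:toric} --- the dimension of $\compl{\arc(V)_{\eta_v}}$ and its number of irreducible components are given by explicit combinatorial invariants of the pair $(\sigma,w)$. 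Thus the condition $\dim\compl{\arc(V)_{\eta_v}}=1$ becomes a single combinatorial condition $(\mathrm C)$ on $(\sigma,w)$, and I would first check on the presentation of Corollary \ref{coro:dim:irrcomp} that $(\mathrm C)$ forces $\compl{\arc(V)_{\eta_v}}$ to be one-dimensional with regular reduction; a regular local ring being a domain, this gives irreducibility, and since $\kappa_v$ is a field of characteristic zero equal to the residue field, Cohen's structure theorem identifies the reduction with $\Spf\kappa_v\dblbr{T}$, i.e. a formal disk. This settles the ``moreover'' part, granted the equivalence.

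Next I would translate ``strongly essential'' combinatorially via the standard toric dictionary: a resolution of $V$ is refined by a regular subdivision $\Delta$ of $\sigma$; the center of $v_w$ on $X_\Delta$ is the orbit closure $V(\tau)$, where $\tau$ is the unique cone of $\Delta$ containing $w$ in its relative interior, of codimension $\dim\tau$; and since $w$ lies in the singular part of $\sigma$, such a $\tau$ is never a face of $\sigma$, so $V(\tau)$ always lies in the exceptional locus. It follows that $v_w$ is strongly essential if and only if $\dim\tau=1$ for every regular subdivision $\Delta$, i.e. if and only if $\R_{\geq 0}w$ is a ray of every regular subdivision of $\sigma$ --- a \emph{compulsory} ray. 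The delicate point, which I expect to be the main obstacle, is the passage from toric resolutions to arbitrary resolutions in the ``strongly essential'' direction: one must show that if the center of $v_w$ has codimension $1$ on every toric resolution then it does so on an arbitrary resolution $W\to V$. I would handle this by dominating $W$ by a common resolution $W'$ of $W$ and of a toric resolution $X_\Delta$; since $W'\to X_\Delta$ is an isomorphism in codimension $1$, the center of $v_w$ on $W'$ is the divisorial strict transform of $V(\R_{\geq 0}w)$, and combining this with the essentiality of $v$ one concludes on $W$. One may alternatively invoke the known combinatorial description of essential toric divisors (see \cite{Bou-Seb:Nash} and the references therein).

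Finally I would match $(\mathrm C)$ with compulsoriness of $\R_{\geq 0}w$, via the contrapositive. If $\R_{\geq 0}w$ is not compulsory, choose a regular subdivision $\Delta$ in which $w$ lies in the relative interior of a cone $\tau\in\Delta$ with $r:=\dim\tau\geq 2$; the primitive ray generators $w_1,\dots,w_r$ of $\tau$ then form part of a $\Z$-basis of $N$, so $w=\sum_{j=1}^{r}a_jw_j$ with integers $a_j\geq 1$. Feeding this decomposition into the formula of Corollary \ref{coro:dim:irrcomp} should yield $\dim\compl{\arc(V)_{\eta_v}}\geq 2$, so $(\mathrm C)$ fails; conversely if $\R_{\geq 0}w$ is compulsory no such decomposition arises and the formula returns $1$, which, with the second paragraph, finishes the proof. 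The only remaining care is in making the correspondence ``decomposition of $w$ coming from a resolution $\leftrightarrow$ value of the formula'' precise, which requires the explicit shape of the invariant in Corollary \ref{coro:dim:irrcomp}. It is worth noting that, consistently with the statement, the gap between ``essential'' and ``strongly essential'' is invisible in dimension at most $2$ --- on a surface the center of a divisorial valuation is a point or a curve, while an exceptional component must be a curve --- and is genuine from dimension $3$ on, the simplest instance being the small resolutions of the ordinary three-dimensional double point, whose exceptional curve is the center of the corresponding (essential but not strongly essential) toric valuation.
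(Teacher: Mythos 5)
Your overall architecture is the same as the paper's: use corollary \ref{coro:dim:irrcomp} (a consequence of theorem \ref{thm:comparison} and \cite{Bou-Seb:toric}) to translate ``$\dim\compl{\arc(V)_{\eta_v}}=1$'' into ``$\bn$ is indecomposable in the semigroup $N\cap\sigma$'', and then match indecomposability with strong essentiality. The paper does not reprove this last equivalence: it quotes it (\cite[Theorem 1.10]{BouGon} together with \cite[Theorem 1.2]{Bou:divess}). Where you depart from this, namely where you attempt to establish the equivalence yourself through the intermediate notion of a compulsory ray, there are two genuine gaps.

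First, in the passage from toric to arbitrary resolutions, the claim that a common resolution $W'$ of $W$ and of a toric resolution $X_\Delta$ has ``$W'\to X_\Delta$ an isomorphism in codimension $1$'' is false: a proper birational morphism onto a smooth (hence locally factorial) variety has purely divisorial exceptional locus, so if it were an isomorphism in codimension $1$ it would be an isomorphism. The implication ``compulsory $\Rightarrow$ strongly essential'' is exactly the nontrivial content of Bouvier's theorem and cannot be recovered by this dominating trick; your fallback of invoking the literature is in effect what the paper does. Second, your last step silently uses ``compulsory $\Rightarrow$ indecomposable'': you assert that if $\R_{\geq 0}w$ occurs in every regular subdivision then ``no such decomposition arises and the formula returns $1$''. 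But corollary \ref{coro:dim:irrcomp} counts arbitrary semigroup decompositions $w=\bn_1+\bn_2$ with $\bn_i\in N\cap\sigma\setminus\{0\}$, whereas compulsoriness only rules out the particular decompositions coming from a regular cone of some subdivision containing $w$ in its relative interior. That a decomposable primitive $w$ admits a regular subdivision omitting the ray $\R_{\geq 0}w$ is again a nontrivial theorem (existence of desingularizations whose rays lie in the minimal generating system, Bouvier--Gonzalez-Sprinberg), i.e.\ precisely part of what \cite{BouGon} is cited for; the easy direction you do prove only yields ``dimension $1$ $\Rightarrow$ compulsory''. Finally, for the ``moreover'' part, irreducibility plus dimension $1$ do not by themselves force a regular reduction (think of a cuspidal curve germ); as you anticipate, one must use the explicit structure of $\cW(\bn)$ from the proof of \cite[Theorem 6.3]{Bou-Seb:toric}, which is also what the paper invokes.
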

Recall that Reguera showed that in general, if $\eta_v$ is the
generic point of the Nash set associated with an essential divisorial valuation
$v$ on a an algebraic variety $V$, then $\compl{\arc(V)_{\eta_v}}$ is
irreducible of dimension $1$ if and only if $V$ satisfies a
property of lifting wedges centered at $\eta_v$, and that this
condition implies that $v$ is a Nash valuation (see \cite[Corollory 5.12]{Reguera})
Note that the latter property of lifting wedges is stronger than the one
considered in \cite[Section 5]{Reg:CSL}, which was shown to be
equivalent to the fact that $v$ is a Nash valuation.

In view of theorem \ref{theo:strong:essential} and of the results of \cite{Bou-Seb:toric},
the following question seems natural.
\begin{ques}\label{ques:irrdim1}
Let $\eta_v$ is the generic point of the Nash set associated with an essential divisorial valuation
$v$ on a an algebraic variety $V$. Are the following conditions equivalent?
\begin{enumerate}
\item 
The formal neighborhood $\compl{\arc(V)_{\eta_v}}$ is irreducible of
dimension $1$.
\item
The valuation $v$ is Nash and strongly essential.
\item
The minimal formal model of a generic rational arc the Nash set
associated with $v$ is irreducible of dimension $0$
\end{enumerate}
\end{ques}
Our results show that the answer is positive in case $v$ is a toric
valuation on a toric variety.
Note also that if $v$ is a terminal valuations (hence strongly essential),
$v$ is a Nash valuation and it is known that (1) holds (see \cite{deFDoc:terminal} and
\cite[Corollary 4.3]{MouReg}).

\subsection{}
Of course, it is natural to ask whether there exist other classes of varieties
for which the formal neighborhood of a rational non degenerate arc is
generically constant on Nash sets, and, if it is so, whether a
comparison result akin to theorem \ref{thm:comparison} still holds. In
\cite{Bou-Seb:Nash}, it is observed that for curve singularities the
genericity property holds. For this class of singularities, the first
and second authors of the present work will address the question of the
validity of the comparison theorem in a forthcoming paper. For normal
varieties equipped with a ``big'' action of an algebraic group
(typically for spherical varieties), it is very likely that at least
the genericity property holds. It would also be interesting to be
able to connect this kind of property to the status of the 
Nash problem (which is known to admit a positive answer in particular for curve and
toric singularities). 
\subsection{}
It is our hope that understanding more precisely the connection
between formal neighborhoods of rational and stable points on arc
spaces could open the way to the study of stable points via the
deformation-theoretic point of view which is so useful in the context
of rational points. At the end of the paper, we give an example of
a computation for a toric valuation illustrating how our comparison
theorem provides a more tractable result than a direct approach.

\section{General conventions and notation}\label{sec:conventions}

\subsection{}  Throughout the whole article, we designate by $ k $ a field of
characteristic zero.  $\Alg_k$ (resp. $\Sch_k$) is the category of
$k$-algebras (resp. of $k$-schemes).  If $ K $ is a field extension of $ k $, we denote by $\Lacp_K$ the
category of complete local $k$-algebras with residue field $k$-isomorphic to $K$. 
For any category $\cC$ and any objects $A,B\in \cC$, we denote by
$\Hom_{\cC}(A,B)$  the set of morphisms from $A$ to $B$ in the category.

\subsection{} A \emph{$k$-variety} is a $k$-scheme of finite type. The non-smooth locus of the structural
morphism of a $ k $-variety $ V $ is the \emph{singular locus of $V$} and its associated reduced $ k $-scheme is denoted by $  \SingLoc{V} $.
If $V$ is an affine $k$-variety and $f$ is a regular function on $V$,
we denote by $\diso{f}$ the distinguished open subset of $V$ where $f$
does not vanish and by $\ann{f}$ the closed set $V\setminus \diso{f}$.

\subsection{}\label{ssec:extension ideals}
Let $R$ be a ring, let $ \fki $ be an ideal of $ R $ and $f\in R$.
We denote by $R_f$ the localization of $R$ with respect to the
multiplicative subset $\{f^r;\,r\in\N\}$.
We denote by $\fki :f^{\infty}$ the ideal $\{g\in R:\,f^rg\in \fki \text{ for some } r\in\N \}$.
Let $R'$ be another ring and $ \vartheta:R\rightarrow R' $ a morphism
of rings. For the sake of easy reading and abusing notation, the
extension ideal of $ \fki $ in $R'$
via the morphism $\vartheta$ is denoted by $\vartheta(\fki )$, or even
by $\fki$ if the involved morphism $\vartheta$ is clear from the
context (for example if $R$ is a subring of $R'$).

\subsection{}
Let $ R[X_{\omega};\omega\in\Omega] $ be a polynomial ring and $ f\in
R $. Let $ S $ be a $ R $-algebra and $\{s_{\omega}\}_{\omega\in\Omega}$ a collection of elements in
$S$. Then we denote by $ f|_{X_{\omega}=s_{\omega}}\in S$ the image of
$f$ by the unique morphism of $R$-algebra $R[X_{\omega}]\to S$ mapping
$ X_{\omega} $ to $ s_{\omega} $ for each $ \omega\in\Omega $.

\subsection{}\label{subsec:WPT}  Let $(\cA,\fkM_{\cA})$ be a complete local ring. An
element $f=\sum_{i\in \N} f_i\vart^i\in \cA\dbT$ is {\em regular} if $f\notin \fkM_{\cA}\dbT$. 
Its {\em order} is $\inf\{i\in \N,\,\,f_i\notin \fkM_{\cA}\}$. Let
$d\in \N$. A {\em Weierstrass polynomial} of order $d$ 
is a monic polynomial of degree $d$, whose order as a regular element of $\cA\dbT$ is $d$.
We shall make a crucial use of the following classical results (the
Weierstrass division and  preparation theorems, see \eg \cite[Theorems 9.1 \& 9.2]{Lang}.
Let $f\in \cA\dbT$ be a regular element of order $d$. Then:
\begin{itemize}\renewcommand{\labelitemi}{$\bullet$}
\item[(i)] There exists a unique pair $(p(\vart),u(\vart))\in \cA\dbT^2$ such that 
$f(\vart)=p(\vart)u(\vart)$, $p(\vart)$ is a Weierstrass polynomial of degree $d$ and
  $u(\vart)$ is a unit in $\cA\dbT$.
\item[(ii)] Let $g\in \cA\dbT$; then there exists a unique pair $(q(\vart),r(\vart))\in \cA\dbT^2$
  such that $g(\vart)=f(\vart)q(\vart)+r(\vart)$ and $r(\vart)$ is a polynomial of degree $<d$.
\end{itemize}
Note that in particular any regular element of $\cA\dbT$ is not a zero
divisor in $\cA\dbT$.

\section{Recollection on arc scheme and toric varieties}\label{sec:recoll:arc:toric}
The crucial objects of our study are arc schemes and toric
varieties. For the convenience of the reader, we give in this section
an overview of the main definitions and properties that we will use in
the article. Along the way, we fix some notation and state and
prove some technical lemmas useful for the sequel.

\subsection{}\label{ssec:jetarc}
Since we are only interested in local properties of arc schemes, we
limit ourselves to the case of arc schemes associated with affine
varieties. Proofs as well as more details on the general theory of
arc schemes are to be found \eg in \cite{CNS:book,BNS-Proceedings}.

To every affine $k$-variety $V$ one attaches its \emph{arc scheme}
$\arc(V)$ which is an affine $k$-scheme characterized  by the
fact that for every $k$-algebra $R$ one has a functorial bijection
\begin{equation}\label{equa:funpoints:arcscheme}
\Hom_{\Sch_k}(\Spec(R),\arc(V))\cong \Hom_{\Sch_k}(\Spec(R\dbT),V).
\end{equation}
A point of $\arc(V)$ is called an \emph{arc}. The above functorial bijection
and the $k$-algebra morphism $R\dbT\to R$ mapping $t$ to $0$ induces a
morphism of $V$-scheme $\arc(V)\rightarrow V$ 
which sends an arc $\alpha$ to its \emph{base-point} $\alpha(0)$.

We will need explicit equations of the affine scheme $\arc(V)$ in
terms of equations of the affine $k$-variety $V$. We begin with the
case of the affine space. Let $\uZ=\{Z_1,\dots,Z_h\}$ be a finite set of indeterminates.
Consider the ring $k[\uZ_{\infty}]:=k[Z_{i,j}\,:\,i\in\{1,\dots,h\},j\in\N]$,
and the $k$-algebra morphism $\phi\colon k[\uZ]\to k[\uZ_{\infty}]$ mapping
$Z_i$ to $Z_{i,0}$. Then the affine $k$-scheme $\arc(\Aff^h_k)$ is isomorphic to
$\Spec(k[\uZ_{\infty}])$. The morphism $\phi\colon k[\uZ]\to k[\uZ_{\infty}]$
induces a morphism $\struc{\Aff^h_k}\to \struc{\arc(\Aff^h_k)}$ dual to the morphism $\alpha\mapsto \alpha(0)$.

For every $F\in k[\uZ]$, define $\{F_s\}_{s\in \N}\in k[\uZ_{\infty}]^{\N}$
by the following identity in $k[\uZ_{\infty}]\dbT$:
\begin{equation}\label{equa:HS}
F|_{Z_i=\sum\limits_{s\in\N}Z_{i,s}\vart^{s}}=\sum\limits_{s\in\N} F_{s}\vart^s.
\end{equation}
Note that $\HS\colon F\mapsto \sum\limits_{s\in\N} F_{s}\vart^s$ is a morphism
of $k$-algebras $k[\uZ]\to k[\uZ_{\infty}]\dbT$.
If $\fki$ is an ideal of $k[\uZ]$, generated by a family $\{F_{\delta}:\delta\in \Delta\}$,
the ideal $\ide{F_{\delta,s}\,:\,\delta\in \Delta,\,s\in \N}$ does not
depend on the choice of the generating family. We denote it by
$[\fki]$. (This notation is borrowed from differential algebra; for
more information on the link between differential algebra and arc
schemes, see \eg \cite{BNS-Proceedings}.)
The following lemma will be useful.
\begin{lemma}\label{lemm:loc:fra}
Let $\fki$ be an ideal of $k[\uZ]$. Let $d\leq h$ and $F\in k[\uZ]$
such that $F$ lies in the quotient ideal $\fki:(\prod_{i=1}^dZ_i)^{\infty}$. Let $(k_i)\in \N^d$ and $\fka$
be the ideal $\ide{Z_{i,s}\,:\,1\leq i\leq h,\,0\leq s\leq k_i}$ of $k[\uZ_{\infty}]$.
Let $G=\prod_{i=1}^dZ_{i,k_i}$.
Then in the localization $k[\uZ_{\infty}]_{G}$, the ideal $[\ide{F}]$
is contained in the ideal $[\fki]+\fka$.
\end{lemma}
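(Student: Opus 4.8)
The plan is to work in the ring $A:=(k[\uZ_\infty]/\fka)_G$ obtained from $k[\uZ_\infty]$ by killing $\fka$ and inverting the image of $G$, and to let $\pi\colon k[\uZ_\infty]\to A$ be the canonical homomorphism, extended coefficientwise to a homomorphism $k[\uZ_\infty]\dbT\to A\dbT$ still denoted $\pi$. Writing $\HS(F)=\sum_{s\in\N}F_s\vart^s$, so that $[\ide F]=\ide{F_s:s\in\N}$, and letting $I\subseteq A$ be the ideal generated by $\pi([\fki])$, the preimage of $I$ in $k[\uZ_\infty]_G$ equals $([\fki]+\fka)\,k[\uZ_\infty]_G$ (localization commutes with passage to quotients); hence $\pi(F_s)\in I$ for all $s$ if and only if $[\ide F]$ is contained in $[\fki]+\fka$ in $k[\uZ_\infty]_G$, and the lemma reduces to proving the former.

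Two ingredients go into this. First, an elementary remark: for any $g\in\fki$, every coefficient of $\HS(g)\in k[\uZ_\infty]\dbT$ lies in $[\fki]$ --- writing $g=\sum_\delta c_\delta F_\delta$ with $c_\delta\in k[\uZ]$ and $\{F_\delta\}$ a generating family of $\fki$, and using that $\HS$ is a $k$-algebra homomorphism, one gets $\HS(g)=\sum_\delta\HS(c_\delta)\HS(F_\delta)$, so each coefficient of $\HS(g)$ is a $k[\uZ_\infty]$-linear combination of the $F_{\delta,s}$. By hypothesis there is $r\in\N$ with $\big(\prod_{i=1}^dZ_i\big)^rF\in\fki$; applying the remark to $g=\big(\prod_{i=1}^dZ_i\big)^rF$ yields $\pi\big(\HS\big((\prod_{i=1}^dZ_i)^rF\big)\big)\in I\cdot A\dbT$. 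Second, the key computation: since $Z_{i,s}\in\fka$ for $s<k_i$, in $A\dbT$ one has $\pi(\HS(Z_i))=\sum_{s\ge k_i}\pi(Z_{i,s})\vart^s=\vart^{k_i}w_i$, where $w_i:=\sum_{j\ge 0}\pi(Z_{i,k_i+j})\vart^j$ has constant term $\pi(Z_{i,k_i})$; as $G=\prod_{i=1}^dZ_{i,k_i}$ is a unit of $A$, each $\pi(Z_{i,k_i})$ is a unit, so $w_i$ --- a power series over $A$ with invertible constant term --- is a unit in $A\dbT$. Multiplying over $i$ and using multiplicativity of $\HS$ again, $\pi\big(\HS\big((\prod_{i=1}^dZ_i)^rF\big)\big)=\vart^{rN}w^{r}\,\pi(\HS(F))$ with $N:=\sum_{i=1}^dk_i$ and $w:=\prod_{i=1}^dw_i$ a unit in $A\dbT$.

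Combining the two, $\vart^{rN}w^{r}\,\pi(\HS(F))\in I\cdot A\dbT$; multiplying by the unit $w^{-r}$ gives $\vart^{rN}\pi(\HS(F))=\sum_{s\in\N}\pi(F_s)\,\vart^{s+rN}\in I\cdot A\dbT$, and since every coefficient of an element of $I\cdot A\dbT$ belongs to $I$, we conclude $\pi(F_s)\in I$ for all $s$, as wanted. I do not foresee a real obstacle here; the two points that need a little care are the routine bookkeeping of simultaneously localizing at $G$ and quotienting by $\fka$ (and the induced maps on the power-series rings), and the use of the elementary fact that a power series over $A$ with invertible constant term is a unit --- it is exactly this invertibility that converts ``inverting $G$'' into the factorization $\pi(\HS(\prod_{i=1}^dZ_i))=\vart^N\cdot(\text{unit})$, and thereby produces the asserted inclusion into $[\fki]+\fka$.
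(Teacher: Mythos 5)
Your proof is correct and takes essentially the same route as the paper's: both write $(\prod_{i=1}^dZ_i)^NF\in\fki$, apply the multiplicative map $\HS$, observe that modulo $\fka$ each $\HS(Z_i)$ factors as $\vart^{k_i}$ times a series whose constant term $Z_{i,k_i}$ becomes a unit once $G$ is inverted, and then cancel that unit and compare $\vart$-coefficients. The only difference is cosmetic: you pass to the quotient-localization $(k[\uZ_{\infty}]/\fka)_G$ and its power series ring, while the paper performs the same computation modulo $\fka\dbT$ inside $k[\uZ_{\infty}]_{G}\dbT$.
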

\begin{proof}
Let $H\in \fki$ and $N\in \N$ such that
$(\prod_{i=1}^dZ_i)^NF=H$. Applying $\HS$ and using the very
definition of $\fka$, one obtains the relation
\[
\prod_{i=1}^d(\vart^{k_i}[Z_{i,k_i}+\vart(\dots)])^N\HS(F)=\HS(H)\pmod{\fka\dbT}.
\]
Thus, setting $K:=\sum_{i=1}^d Nk_i$, for $s<K$ one has $H_s\in \fka\dbT$ and 
one may write
\[
\prod_{i=1}^d[Z_{i,k_i}+\vart(\dots)]^N\HS(F)=\sum_{s\geq 0}H_{s+K}\vart^{s}\pmod{\fka\dbT}.
\]
By the definition of $G$, the series $\prod_{i=1}^d[Z_{i,k_i}+\vart(\dots)]^N$ is invertible in $k[\uZ_{\infty}]_{G}\dbT$.
That concludes the proof.
\end{proof}
Now if $\fki$ is an ideal of $k[\uZ]$ and the affine $k$-scheme $V$
is presented as $\Spec(k[\uZ]/\fki)$, then the affine $k$-scheme $\arc(V)$ is isomorphic to
$\Spec(k[\uZ_{\infty}]/[\fki])$. The morphism $\phi\colon k[\uZ]\to k[\uZ_{\infty}]$
induces a morphism $\struc{V}\to \struc{\arc(V)}$ dual to the morphism $\alpha\mapsto \alpha(0)$.

The morphism $\HS$ induces a morphism
$\HS_V\colon \struc{V}\to \struc{\arc(V)}\dbT$, dual to the so-called
{\em universal arc on $V$}. If $R$ is a $k$-algebra and $\alpha^{\ast}\colon \struc{\arc(V)}\to R$ is an
$R$-point of $\arc(V)$, inducing a $k$-algebra morphism
$\alpha^{\ast}\dbT\colon \struc{\arc(V)}\dbT\to R\dbT$,
the corresponding $R\dbT$ point of $\struc{V}$ by bijection \eqref{equa:funpoints:arcscheme}
is $\alpha^{\ast}\dbT\circ \HS_V$.

Let $W$ be a closed $k$-subscheme of $V$ and $\fkj=\ide{G_{\gamma}\,:\,\gamma\in\Gamma}$ be and
ideal of $k[\uZ]$ such that $W\cong\Spec(k[\uZ]/\fki+\fkj)$.
Then
\[
\arc(W)\cong \Spec(k[\uZ_{\infty}]/[\fki]+[\fkj])
\]
identifies with a closed subscheme of $\arc(V)$ and the open subset
$\arc(V)\setminus \arc(W)$ of $\arc(V)$ is the union of the
distinguished open subsets
$\diso{G_{\gamma,s}}$ for $\gamma\in \Gamma$ and $s\in \N$.

An element of  $\arc(V)\setminus \arc(\SingLoc{V}) $ is called a \emph{non-degenerate arc}.

\subsection{}\label{ssec:Nash set}
  (See, e.g., \cite{ELL04,Ish:arcs:valuations,Ish:maximal}.)
  Let $\alpha$ be an arc of $V$ with residue field $\kappa(\alpha)$,
  inducing a $\kappa(\alpha)\dbT$-point of $V$. Composing the morphism $\struc{V}\to
  \kappa(\alpha)\dbT$ with the $t$-valuation defines a semivaluation
  $\ord_{\alpha}\colon \struc{V}\to \N\cup\{+\infty\}$. Now let $v$ be a
  divisorial valuation over $V$. The associated {\em Nash set}, or {\em
    maximal divisorial set}, is the closure in $\arc(V)$ of the set
  $\{\alpha\in \arc(V),\quad \ord_{\alpha}=v\}$. It is an irreducible
  subset of $\arc(V)$, denoted by $\mds_v$.

\subsection{}
From now on, we introduce some notation and basic facts on normal
toric varieties. (For further details, e.g., see \cite[Sections 1.1
  and 1.2]{CLS}.) Since we are studying local
properties, in this article we can restrict ourselves to the case of affine normal toric
varieties.

Let $ d $ be a positive integer and $ \torus $
a split algebraic $ k $-torus of dimension $ d $. Let $
N:=\Hom(\numberset{G}_{m,k},\torus) $ be the group of its
cocharacters which is a free $ \Z $-module of rank $ d $ (i.e., a
lattice isomorphic to $ \Z^d $) and $ M:=\Hom_{\Z}(N,\Z) $ its dual $
\Z $-module (i.e., the group of characters of $ \torus $). 
 We denote by $ N_{\R}=N\otimes_{\Z}\R $
(resp. $ M_{\R}=M\otimes_{\Z}\R $) the $ \R $-vector space of
dimension $ d $ associated with $ N $ (resp. $ M $). We have a $ \R
$-bilinear canonical map $ \ide{\; ,\;} : M_{\R}\times
N_{\R}\rightarrow \R $. The points of the lattices $N$ and $M$, considered as
points of the associated vector spaces, are called their integral
points. We will simply call a \emph{cone} of $ N_{\R} $ a strongly
convex rational polyhedral cone of the vector space $ N_{\R} $ (\ie a
convex cone generated by finitely many elements of $ N $, which
moreover does not contain any line).

\subsection{}\label{ssec:toric:basic}
Let $ \sigma $ be a cone of $ N_{\R} $.  
By the Gordan Lemma (e.g., \cite[Proposition 1.2.17]{CLS}),
the semigroup $S_{\sigma}:=\sigma^{\vee}\cap M $ is finitely
generated. The spectrum of the $k$-algebra $k[S_{\sigma}]$ associated with the semigroup $S_{\sigma}$
then defines a normal affine toric variety $V_{\sigma}$ with torus
$\torus$. Note that every affine normal toric variety with torus
$ \torus $ is of the form $ V_{\sigma} $ for $ \sigma $ a cone of
$ N $ (see e.g., \cite[Theorem 1.3.5]{CLS}).
For every $ \bm\in S_{\sigma} $ we denote by $ \chi^{\bm} $ the regular
function on $ V_{\sigma} $ defined by $ \bm $. Recall that for every
$k$-algebra $A$, the set $\Hom_{\Alg_k}(k[S_{\sigma}],A)$ is in
natural bijection with the set of semigroup morphism $S_{\sigma}\to A$, where the semigroup structure on $A$ is induced by the multiplication.

 Let  $\{\bm_{1},\dots,\bm_{h}\}$ be the minimal set of generators of the
 semigroup $S_{\sigma}$. We may and shall assume in the sequel that
 the set $\{ \bm_{1},\dots,\bm_{d}\}$
is a $\Z$-basis of $M$.
 If we call $ z_1,\dots,z_h $ respectively
 $\chi^{\bm_1},\dots,\chi^{\bm_h} $, we deduce that
 $\struc{V_{\sigma}}:=k[S_{\sigma}]= k[z_1,\dots,z_h]$. Moreover the closed
subscheme defined by the ideal $\ide{\prod_{1\le i\le h}z_i}$ has for
support the closed set $V_{\sigma}\setminus \torus$, and the same holds for the
ideal $\ide{\prod_{1\le i\le d}z_i}$.

Now let $\arc^{\circ}(V_{\sigma})$ be the open subset of  $\arc(V_{\sigma})$ defined by 
$
\arc^{\circ}(V_{\sigma}):=\arc(V_{\sigma})\setminus \arc(V_{\sigma}\setminus \torus).
$
Thus, by \S\ref{ssec:jetarc}, for any $\alpha\in\arc(V_{\sigma})$,
one has $\alpha\in \arc^{\circ}(V_{\sigma})$ if and only if for every 
$\bm\in S_{\sigma}$, $\alpha^{\ast}(\chi^{\bm})\neq 0$ if and only if 
for every $1 \le i \le d$ one has $\alpha^{\ast}(z_i)\in \kappa(\alpha)\dbT\setminus\{0\}$. 
Therefore one has
\[
\arc^{\circ}(V_{\sigma})
=\bigcap_{i=1}^d \bigcup_{s\in \N}\diso{z_{i,s}}.
\]

\subsection{}\label{ssec:toric:valuation}
Let $\bn$ be a point of $ \sigma\cap N $.  For $f\in \struc{V_{\sigma}}$,
set
\[
\ord_{\bn }(f)=\acc{\bn}{f}:=\inf\limits_{\chi^{\bm}\in f} \acc{\bm}{\bn}
\]
(here $\chi^{\bm}\in f$ means that the coefficient of $\chi^{\bm}$ in the
decomposition of $f$ is not zero). Then $\ord_{\bn}$ is a divisorial toric valuation on $V_{\sigma}$,
and one easily sees that $\bn\mapsto \ord_{\bn}$ is a bijection
between $\sigma\cap N$ and the set of divisorial toric valuations on
$V_{\sigma}$. From now on we shall identify the later set with $N\cap \sigma$.

Let $\alpha\in \arc^{\circ}(V_{\sigma})$. The semigroup morphism
$\bm\in S_{\sigma}\mapsto \ord_{\vart}(\alpha^{\ast}(\chi^{\bm}))$
extends uniquely to a group morphism $\bn_{\alpha}\colon M\to \Z$ which is nonnegative on
$S_{\sigma}$.
In other words $\bn_{\alpha}$ is the unique element of $N\cap \sigma$
satisfying: for every $1\le i \le h$,
$\ord_{\vart}(\alpha^{\ast}(z_i))=\acc{\bm_i}{\bn_{\alpha}}$.
For every $ \bn \in \sigma\cap N $, we set 
\[
\arc^{\circ}(V_{\sigma})_{\bn }:=\{\alpha\in \arc^{\circ}(V_{\sigma})\,;\,\bn_{\alpha}=\bn \}
\]
\[
\text{and}\quad\arc^{\circ}(V_{\sigma})_{\geq \bn }:=\{\alpha\in   \arc^{\circ}(V_{\sigma})\,;\,\bn_{\alpha}\in \bn+\sigma \}.
\]
Thus $\alpha\in \arc^{\circ}(V_{\sigma})_{\geq \bn}$ if and
only if for every $\bm\in S_{\sigma}$ one has
$\ord_{\vart}(\alpha^{\ast}(\chi^{\bm}))\geq \acc{\bm}{\bn}$ if and only if 
for every $1\le i \le h$ one has
$\ord_{\vart}(\alpha^{\ast}(\chi^{\bm_i}))\geq \acc{\bm_i}{\bn}$.
If $\alpha\in \arc^{\circ}(V_{\sigma})_{\geq \bn}$ and
$\phi_{\alpha}\colon S_{\sigma}\to \kappa(\alpha)\dbT$ is the
associated semigroup morphism, then $\bm\mapsto \vart^{-\acc{\bm}{\bn}}\phi_{\alpha}$
defines a semigroup morphism $\psi_{\alpha}\colon S_{\sigma}\to \kappa(\alpha)\dbT$,
and $\bn_{\alpha}=\bn$ if and only if
$\psi_{\alpha}(S_{\sigma})\subset (\kappa(\alpha)\dbT)^{\inv}$ if and
only if for $1\le i\le d$ one has $\ord_{\vart}(\alpha^{\ast}(\chi^{\bm_i}))=\acc{\bm_i}{\bn}$.
Note also that the element of $\arc(V_{\sigma})(k)$ corresponding to the semigroup morphism
$S_{\alpha}\to k\dbT$, $\bm\mapsto \vart^{\acc{\bm}{\bn}}$ lies in $\arc^{\circ}(V_{\sigma})_{\bn}$, which is therefore nonempty.

The following lemma will be useful for describing the generic points
of the Nash sets associated with toric valuations.
\begin{lemma}\label{lemm:decr:mds:toric:set}
Let $\bn \in \sigma\cap N $.
\begin{itemize}
\item [(i)]
One has
\[
\arc^{\circ}(V_{\sigma})_{\geq \bn}=\arc^{\circ}(V_{\sigma})\cap
\bigcap_{i=1}^h \bigcap_{s=0}^{\acc{\bm_i}{\bn}-1} \ann{z_{i,s}}.\]
\item [(ii)]
One has
\[
\arc^{\circ}(V_{\sigma})_{\bn}=\arc^{\circ}(V_{\sigma})\cap
\left(\bigcap_{i=1}^h    \bigcap_{s=0}^{\acc{\bm_i}{\bn}-1} 
\ann{z_{i,s}}\right)
\cap
\bigcap_{i=1}^d\diso{z_{i,\acc{\bm_i}{\bn}}}.
\]
\item [(iii)]
The closure of $\arc^{\circ}(V_{\sigma})_{\bn }$ coincides with the
Nash set $\mds_{\bn}=\mds_{\ord_{\bn}}$ (see \S \ref{ssec:Nash
set}) associated with the toric valuation $\bn$.
\item [(iv)]
One has
\[
\arc^{\circ}(V_{\sigma})_{\bn}=\mds_{\bn}\cap \arc^{\circ}(V_{\sigma}) \cap \bigcap_{i=1}^d\diso{z_{i,\acc{\bm_i}{\bn}}}.
\]
In particular $\arc^{\circ}(V_{\sigma})_{\bn}$ is a nonempty open subset of $\mds_{\bn}$.
\end{itemize}
\end{lemma}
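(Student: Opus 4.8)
The plan is to read off parts (i) and (ii) directly from the definitions recalled in \S\ref{ssec:jetarc} and \S\ref{ssec:toric:valuation}, then to prove the substantive part (iii) by a translation-by-a-generic-torus-point argument, and finally to deduce (iv) formally. For (i) and (ii) only an unwinding of definitions is needed: recall from \S\ref{ssec:jetarc} that for $\alpha\in\arc(V_{\sigma})$ one has $\alpha^{\ast}(\chi^{\bm_i})=\alpha^{\ast}(z_i)=\sum_{s\in\N}\alpha^{\ast}(z_{i,s})\,\vart^{s}$ in $\kappa(\alpha)\dbT$, so that $\ord_{\vart}(\alpha^{\ast}(\chi^{\bm_i}))\geq\acc{\bm_i}{\bn}$ if and only if $\alpha\in\ann{z_{i,s}}$ for all $0\leq s\leq\acc{\bm_i}{\bn}-1$, and $\ord_{\vart}(\alpha^{\ast}(\chi^{\bm_i}))=\acc{\bm_i}{\bn}$ if and only if in addition $\alpha\in\diso{z_{i,\acc{\bm_i}{\bn}}}$. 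Intersecting over $1\leq i\leq h$ and invoking the description of $\arc^{\circ}(V_{\sigma})_{\geq\bn}$ from \S\ref{ssec:toric:valuation} yields (i); combining this with the criterion recalled there --- that for $\alpha\in\arc^{\circ}(V_{\sigma})_{\geq\bn}$ one has $\bn_{\alpha}=\bn$ exactly when $\ord_{\vart}(\alpha^{\ast}(\chi^{\bm_i}))=\acc{\bm_i}{\bn}$ for $1\leq i\leq d$ --- yields (ii). No difficulty is expected here.

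The core of the proof is (iii). Set $C_{\bn}:=\{\alpha\in\arc(V_{\sigma})\,:\,\ord_{\alpha}=\ord_{\bn}\}$, so that $\mds_{\bn}=\overline{C_{\bn}}$ by the definition in \S\ref{ssec:Nash set}. I would establish the chain
\[
C_{\bn}\ \subseteq\ \arc^{\circ}(V_{\sigma})_{\bn}\ \subseteq\ \mds_{\bn},
\]
from which (iii) follows at once. The first inclusion is immediate: if $\ord_{\alpha}=\ord_{\bn}$ then $\ord_{\vart}(\alpha^{\ast}(\chi^{\bm_i}))=\acc{\bm_i}{\bn}<+\infty$ for every $i$, so $\alpha\in\arc^{\circ}(V_{\sigma})$ and $\bn_{\alpha}=\bn$ by the very definition of $\bn_{\alpha}$. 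The second inclusion is the delicate one: given $\alpha\in\arc^{\circ}(V_{\sigma})_{\bn}$, let $t_{0}$ be the generic point of $\torus_{\kappa(\alpha)}$ and consider the orbit morphism $o_{\alpha}\colon\torus_{\kappa(\alpha)}\to\arc(V_{\sigma})$, $t\mapsto t\cdot\alpha$, induced by the $\torus$-action on $V_{\sigma}$; it is continuous, carries the identity to $\alpha$, and carries $t_{0}$ to some point $\beta$. Since $\torus_{\kappa(\alpha)}$ is irreducible, the identity lies in $\overline{\{t_{0}\}}$, hence $\alpha\in\overline{\{\beta\}}$, and it suffices to prove $\beta\in C_{\bn}$. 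For $f=\sum_{\bm}c_{\bm}\chi^{\bm}\in\struc{V_{\sigma}}$ one has $\beta^{\ast}(f)=\sum_{\bm}c_{\bm}\,\chi^{\bm}(t_{0})\,\alpha^{\ast}(\chi^{\bm})$, and since each $\alpha^{\ast}(\chi^{\bm})$ has $\vart$-adic order exactly $\acc{\bm}{\bn}$ with leading coefficient some $u_{\bm}\in\kappa(\alpha)^{\inv}$, the coefficient of $\vart^{\ord_{\bn}(f)}$ in $\beta^{\ast}(f)$ equals the value at $t_{0}$ of the function $\sum_{\bm}c_{\bm}u_{\bm}\chi^{\bm}$, the sum running over those $\bm$ occurring in $f$ with $\acc{\bm}{\bn}$ minimal; this function is nonzero because distinct characters are linearly independent, so it does not vanish at the generic point $t_{0}$, and therefore $\ord_{\vart}(\beta^{\ast}(f))=\ord_{\bn}(f)$ for every $f$, i.e. $\beta\in C_{\bn}\subseteq\mds_{\bn}$. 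I expect this translation step --- and in particular the clean identification of the relevant $\vart$-adic leading coefficient as a non-vanishing character value --- to be where most of the care is required.

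Finally, (iv) is a formal consequence of (i)--(iii). The inclusion $\arc^{\circ}(V_{\sigma})_{\bn}\subseteq\mds_{\bn}\cap\arc^{\circ}(V_{\sigma})\cap\bigcap_{i=1}^{d}\diso{z_{i,\acc{\bm_i}{\bn}}}$ comes from (ii) and (iii). For the reverse inclusion, observe that by (i) every $z_{i,s}$ with $0\leq s<\acc{\bm_i}{\bn}$ vanishes on $\arc^{\circ}(V_{\sigma})_{\bn}$, hence on $\mds_{\bn}=\overline{\arc^{\circ}(V_{\sigma})_{\bn}}$; thus a point $\alpha$ of the right-hand side satisfies $\ord_{\vart}(\alpha^{\ast}(\chi^{\bm_i}))\geq\acc{\bm_i}{\bn}$ for $1\leq i\leq h$, while the conditions $z_{i,\acc{\bm_i}{\bn}}(\alpha)\neq0$ for $1\leq i\leq d$ force $\acc{\bm_i}{\bn_{\alpha}}=\acc{\bm_i}{\bn}$ for those $i$. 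As $\{\bm_{1},\dots,\bm_{d}\}$ is a $\Z$-basis of $M$, this forces $\bn_{\alpha}=\bn$, i.e. $\alpha\in\arc^{\circ}(V_{\sigma})_{\bn}$. The resulting equality also exhibits $\arc^{\circ}(V_{\sigma})_{\bn}$ as the trace on $\mds_{\bn}$ of an open subset of $\arc(V_{\sigma})$, hence as an open subset of $\mds_{\bn}$, which is nonempty as noted just before the lemma.
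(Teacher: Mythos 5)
Your proposal is correct, and for parts (i), (ii) and (iv) it coincides with the paper's treatment: the paper also disposes of (i) and (ii) as mere reformulations of the descriptions of $\arc^{\circ}(V_{\sigma})_{\geq \bn}$ and $\arc^{\circ}(V_{\sigma})_{\bn}$ given in \S\ref{ssec:toric:valuation}, and of (iv) as a topological consequence of (ii) and (iii). The genuine difference is in (iii): the paper simply cites \cite[Example 2.10]{Ish:maximal}, whereas you give a self-contained argument. Your chain $C_{\bn}\subseteq \arc^{\circ}(V_{\sigma})_{\bn}\subseteq \mds_{\bn}$ (with $C_{\bn}$ the set of arcs $\alpha$ with $\ord_{\alpha}=\ord_{\bn}$) does yield the equality of closures, and the delicate inclusion is handled correctly: translating $\alpha$ by the generic point $t_{0}$ of $\torus_{\kappa(\alpha)}$ produces $\beta$ with $\beta^{\ast}(\chi^{\bm})=\chi^{\bm}(t_{0})\alpha^{\ast}(\chi^{\bm})$, the coefficient of $\vart^{\ord_{\bn}(f)}$ in $\beta^{\ast}(f)$ is the value at $t_{0}$ of a nonzero element of the group algebra $\kappa(\alpha)[M]$ (linear independence of characters), hence nonzero at the generic point, so $\ord_{\beta}=\ord_{\bn}$, and continuity of the orbit map together with $e\in\overline{\{t_{0}\}}$ gives $\alpha\in\overline{\{\beta\}}\subseteq\mds_{\bn}$; this is in the spirit of Ishii's proof but keeps the paper self-contained. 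The only point worth making explicit is the construction of the orbit morphism itself, i.e.\ that the $\torus$-action on $V_{\sigma}$ induces, via constant arcs and functoriality of $\arc$, an action morphism $\torus\times_{k}\arc(V_{\sigma})\to\arc(V_{\sigma})$, and that $\ord_{\beta}$ may be computed from any field-valued point representing the scheme point $\beta$; both are routine. So the proposal trades the paper's external reference for a short direct argument, at the cost of a slightly longer proof.
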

\begin{proof}
Assertion (i) and (ii) are nothing but a reformulation of the above descriptions
of $\arc^{\circ}(V_{\sigma})_{\bn}$ and $\arc^{\circ}(V_{\sigma})_{\geq \bn}$.

For a proof of (iii), see \cite[Example 2.10]{Ish:maximal}.

Assertion (iv) is a straightforward topological consequence of (ii) and (iii).
\end{proof}

\subsection{}\label{subsec:toric:ideal}
An explicit description of $ V_{\sigma} $ as a closed subscheme of the
affine space $\Aff_{k}^{h} $ will be useful in the sequel.

Recall that $\{\bm_1,\dots,\bm_h\}$ is the minimal set of generators of
$S_{\sigma}$, and that we may and shall assume that $\{\bm_1,\dots,\bm_d\}$
is a $ \Z$-basis of $ M $.
Let $\{\be_i;\,i\in \{1,\dots,h\}\}$ be the canonical basis of $\Z^h$.
Being given $ \bell=(\ell_{1},\dots,\ell_{h})\in \Z^h$, we set
\[
\bell^{+}=\sum\limits_{\ell_{i}\ge 0}\ell_{i}\be_i
\;\;\;\text{and}\;\;\; \bell^{-}=-\sum\limits_{\ell_{i}<
  0}\ell_{i}\be_i,
\]
which are both elements of $\N^{h}$. Note that $\bell=\bell^{+}-\bell^{-} $.
On the other hand, for $\bell\in \N^h$, set $\uZ^{\bell}:=\prod\limits_{i=1}^{h}Z_{i}^{\ell_{i}}$
and $F_{\bell}:=\uZ^{\bell^{+}}-\uZ^{\bell^{-}}$.

Mapping $\be_i$ to $\pi(\be_i):=\bm_i$
induces an exact sequence of groups
\begin{align}\label{eq:suite exacte groupes}
0\rightarrow L \rightarrow \Z^{h}\xrightarrow{\pi}M\rightarrow 0,
\end{align}
where $ L $ is a subgroup of $ \Z^{h} $. For $\bell\in L$ and $\bn\in N$, we set 
\[
\acc{\bn}{\bell}:=\acc{\bn}{\sum\limits_{\substack{i=1\\\ell_i>0}}^h\ell_i\bm_i}
=-\acc{\bn}{\sum\limits_{\substack{i=1\\\ell_i<0}}^h\ell_i\bm_i}.
\]
By \cite[Proposition 1.1.9]{CLS}, the ideal of $ k[\uZ] $ defining $V_{\sigma} $ is
\begin{equation}\label{eq:toric ideal}
\fki_{\sigma}:=\ide{F_{\bell}\,;\,\bell\in L }.
\end{equation}

The set $\{\bm_{1},\dots,\bm_{d}\} $ being a $ \Z $-basis of $M$, 
for $ q\in \{d+1,\dots,h\}$,   we can write the element $\bm_q $ as a linear
combination with (possibly negative) integer coefficients  of
$\bm_{1},\dots,\bm_{d} $.
Thus we have in $ L $ an element $\bell_q=(\ell_{q,1},\dots,\ell_{q,h}) $ such that $ \ell_{q,q}=1 $
and $ \ell_{q,q'}=0 $ for every $ q'\in\{d+1,\dots,h\}\setminus\{q\}$. The element $ \bell_q\in L $ induces an element
\begin{align}\label{eq:F_q}
\Fdis{q}=Z_q\prod\limits_{\substack{i=1\\\ell_{q,i}\ge 0}}^{d}Z_{i}^{\ell_{q,i}}-\prod\limits_{\substack{i=1\\\ell_{q,i}< 0}}^{d}Z_{i}^{-\ell_{q,i}}
\end{align}
\noindent
in the ideal $ \fki_{\sigma} $. We observe that in the binomial
$\Fdis{q} $ none of the variables $ Z_{d+1},\dots,Z_{h} $ appears,
excepting $ Z_q $.
\begin{lemma}\label{lemm:quotient}
Let $\fkj:=\ide{\Fdis{q}\,:\,d+1\leq q\leq h}$. 
\begin{enumerate}
\item [(i)] Set $G_d:=\prod\limits_{i=1}^dZ_i$.
For every $\bell\in L$, $F_{\bell}$ lies in the quotient ideal $\fkj:G_d^{\infty}$.
In other words, the ideal $\fki_{\sigma}$ vanishes in $k[\uZ]_{G_d}/\fkj$.
\item [(ii)]
Let $ \bn \in \sigma\cap N$ and $ \fka_{\bn } $ be the ideal
  $\ide{\{Z_{i,s_{i}}\,:\,1\le i\le h, 0\le s_{i}<\acc{\bn}{\bm_i}\}}$ of 
  $k[\uZ_{\infty}] $. Let $ G_{\bn }:=\prod\limits_{i=1}^{d}Z_{i,\ide{\bn ,\bm_i}} $.
Then the ideals $[\fki_{\sigma}]+\fka_{\bn}$ and  $[\fkj]+\fka_{\bn}$ coincide in the localization $k[\uZ_{\infty}]_{G_{\bn}}$.
\end{enumerate}
\end{lemma}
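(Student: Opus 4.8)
The plan is to deduce both assertions from Lemma \ref{lemm:loc:fra}, which is exactly the tool tailored for moving a binomial relation into a localization modulo a ``low-order-jets'' ideal. For part (i), the key observation is that the elements $\Fdis{q}$, $d+1\le q\le h$, together with the ideal $\fkj$ they generate, allow one to express each variable $Z_q$ ($q>d$) in $k[\uZ]_{G_d}/\fkj$ as a Laurent monomial in $Z_1,\dots,Z_d$: indeed, reading \eqref{eq:F_q}, modulo $\fkj$ one has $Z_q\cdot\prod_{\ell_{q,i}\ge0}Z_i^{\ell_{q,i}}=\prod_{\ell_{q,i}<0}Z_i^{-\ell_{q,i}}$, so after inverting $G_d$ we get $Z_q=\uZ^{\bell_q'}$ for the Laurent-monomial exponent vector $\bell_q'=(\ell_{q,1},\dots,\ell_{q,d},0,\dots,0)\in\Z^d$. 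Since $\{\bm_1,\dots,\bm_d\}$ is a $\Z$-basis of $M$, the substitution $Z_i\mapsto \chi^{\bm_i}$ ($1\le i\le d$), $Z_q\mapsto \uZ^{\bell_q'}$, identifies $k[\uZ]_{G_d}/\fkj$ with the Laurent polynomial ring $k[M]=k[\torus]$; under this identification every $F_\bell$ ($\bell\in L$) maps to $\chi^{\pi(\bell^+)}-\chi^{\pi(\bell^-)}=0$ because $\pi(\bell)=0$. Concretely this gives an integer $N$ and $H\in\fkj$ with $G_d^N F_\bell=H$, which is the statement $F_\bell\in\fkj:G_d^\infty$; the second formulation is just a restatement. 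First I would therefore make the substitution explicit, then chase the binomials.

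For part (ii), I would combine part (i) with Lemma \ref{lemm:loc:fra}. One inclusion is immediate: $\fkj\subseteq\fki_\sigma$ since every $\Fdis{q}=F_{\bell_q}$ with $\bell_q\in L$, hence $[\fkj]\subseteq[\fki_\sigma]$ and so $[\fkj]+\fka_\bn\subseteq[\fki_\sigma]+\fka_\bn$ already in $k[\uZ_\infty]$. For the reverse inclusion, fix $\bell\in L$; by (i) there is $N\in\N$ with $G_d^N F_\bell=H\in\fkj$, \ie\ $F_\bell\in\fkj:G_d^\infty$. Now apply Lemma \ref{lemm:loc:fra} with $\fki$ there equal to $\fkj$, with $F=F_\bell$, with $k_i=\acc{\bn}{\bm_i}$ for $1\le i\le d$ (and the role of $\fka$ played by $\fka_\bn$, which is precisely $\ide{Z_{i,s}\,:\,1\le i\le h,\,0\le s\le \acc{\bn}{\bm_i}-1}$, i.e.\ the ideal called $\fka$ in Lemma \ref{lemm:loc:fra} once one checks the indexing convention matches), and with $G=\prod_{i=1}^d Z_{i,\acc{\bn}{\bm_i}}=G_\bn$. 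The lemma yields that $[\ide{F_\bell}]\subseteq[\fkj]+\fka_\bn$ in $k[\uZ_\infty]_{G_\bn}$. Since $[\fki_\sigma]=\sum_{\bell\in L}[\ide{F_\bell}]$, summing over $\bell\in L$ gives $[\fki_\sigma]+\fka_\bn\subseteq[\fkj]+\fka_\bn$ in $k[\uZ_\infty]_{G_\bn}$, and combined with the trivial inclusion this proves equality.

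The main obstacle, and the only point requiring genuine care, is the bookkeeping around Lemma \ref{lemm:loc:fra}: one must check that the degree bounds match (the lemma uses $0\le s\le k_i$ for the ideal $\fka$ while $\fka_\bn$ uses $0\le s_i<\acc{\bn}{\bm_i}$, so one needs $k_i=\acc{\bn}{\bm_i}-1$ for $1\le i\le d$ and to absorb the remaining generators $Z_{i,s}$, $d<i\le h$, $s<\acc{\bn}{\bm_i}$, or else invoke the lemma with the appropriate $k_i$ for all $i\le h$), that $G=G_\bn$ is indeed the product $\prod_{i=1}^d Z_{i,k_i}$ appearing in the lemma, and that the hypothesis $F_\bell\in\fkj:(\prod_{i=1}^dZ_i)^\infty$ is exactly what (i) provides. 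Once the indices are lined up, the statement falls out of Lemma \ref{lemm:loc:fra} with no further computation. A secondary subtlety is that in (i) the identification $k[\uZ]_{G_d}/\fkj\cong k[M]$ should be justified by exhibiting a mutually inverse pair of ring maps, so that one genuinely concludes $\fki_\sigma$ vanishes there rather than merely that each $F_\bell$ maps to $0$ in some quotient.
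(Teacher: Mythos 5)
Your proposal is correct, and its overall skeleton is the same as the paper's: part (ii) is proved exactly as in the paper (the inclusion $[\fkj]\subset[\fki_{\sigma}]$ is trivial, and the reverse inclusion follows by feeding (i) into lemma \ref{lemm:loc:fra} with $G=G_{\bn}$ and summing over $\bell\in L$). The only real divergence is in (i): the paper invokes Sturmfels' Lemma 12.2, which gives the vanishing of $\fki_{\sigma}$ in $k[\uZ]_{G_h}/\fkj$ with \emph{all} variables inverted, and then uses \eqref{eq:F_q} to see that $k[\uZ]_{G_d}/\fkj\cong k[\uZ]_{G_h}/\fkj$; you instead prove the statement directly at $G_d$ by exhibiting the mutually inverse maps identifying $k[\uZ]_{G_d}/\fkj$ with the Laurent ring $k[M]$ (using that $\bm_1,\dots,\bm_d$ is a $\Z$-basis of $M$) and checking $F_{\bell}\mapsto\chi^{\pi(\bell^+)}-\chi^{\pi(\bell^-)}=0$. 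This is essentially a self-contained re-derivation of the cited lemma in the present situation, and it is fine; you rightly insist that one needs injectivity (i.e.\ the isomorphism), not merely a map killing the $F_{\bell}$. One small correction on the bookkeeping you flag for (ii): the right match is $k_i=\acc{\bn}{\bm_i}$, reading the bound in lemma \ref{lemm:loc:fra} as $0\le s<k_i$ (which is what its proof actually uses, and is evidently the intended statement), so that $G=\prod_{i=1}^dZ_{i,k_i}=G_{\bn}$ exactly; your alternative choice $k_i=\acc{\bn}{\bm_i}-1$ would put you in the wrong localization, since then $\prod_iZ_{i,k_i}\neq G_{\bn}$. The extra generators $Z_{i,s}$ with $d<i\le h$ in $\fka_{\bn}$ are harmless, as enlarging $\fka$ only strengthens the target ideal.
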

\begin{proof}
Set $G_h:=\prod_{i=1}^hZ_i$. Since $\{\bell_q\,:\,d+1\le q\le h\}$ spans the lattice $L$,
\cite[Lemma 12.2]{Stu96} shows that $\fki_{\sigma}$
vanishes in $k[\uZ]_{G_h}/\fkj$. But \eqref{eq:F_q} shows
that the natural morphism $k[\uZ]_{G_d}\to k[\uZ]_{G_h}$
induces an isomorphism $k[\uZ]_{G_d}/\fkj\cong k[\uZ]_{G_h}/\fkj$.
This shows (i).

By (i) and lemma \ref{lemm:loc:fra}, in the localization
$k[\uZ_{\infty}]_{G_{\bn}}$, the ideal $[\fki_{\sigma}]$ is contained in $[\fkj]+\fka_{\bn}$.
Since the inclusion $[\fkj]\subset [\fki_{\sigma}]$ holds by definition, one deduces that (ii) also holds.
\end{proof}

\section{Technical machinery for computing the formal neighborhood at the generic point of the Nash set}
\label{sec:technical:formal:neighbourhood}
In this section we develop the technical results which we will use in
section \ref{sec:toric:varieties} to obtain a convenient presentation of the
formal neighborhood of the generic point of the Nash set
associated with a divisorial toric valuation. 
The main result of this section is theorem
\ref{theo:main:technical}, whose hypotheses are formulated in a
somewhat abstract form. In section \ref{sec:toric:varieties}
we will verify that these hypotheses hold in the toric setting.

\subsection{} 
We first state a version of the Hensel's lemma for an arbitrary
set of variables. The proof is basically the same as in the case of a finite
set of variables. Since we have not been able to find a convenient
reference, we include it.
\begin{proposition}\label{prop:hensel}
Let $(\cA,\fkM_{\cA})$ be a complete local ring with residue field $\kappa$. Let $I$ be a set
and $\uY=\{Y_{i}\}_{i\in I}$  be a collection of indeterminates. 
Let $J$ be a set and $\{F_j;\,j\in J\}$ be a collection of elements in $\cA[\uY]$.
For $\uy\in \cA^{I}$, we denote by $\uJ_{\uy}$ the $\cA$-linear application
$\cA^I\to \cA^J$ induced by the Jacobian matrix $[\partial_{Y_i}F_j]|_{\uY=\uy}$,
and by $\uF|_{\uY=\uy}\in \cA^J$ the $J$-tuple $(F_j|_{\uY=\uy};\,j\in J)$.

We assume that there exists $\uy^{(0)}\in \cA^I$ such that:
\begin{enumerate}
\item
One has $\uF|_{\uY=\uy^{(0)}}=0 \pmod{\fkM_{\cA}}$.
\item
The $\kappa$-linear application $\kappa^I\to \kappa^J$
deduced from $\uJ_{\uy^{(0)}}$ by reduction modulo $\fkM_{\cA}$ is invertible.
\end{enumerate}
Then there exists a unique element $\cuY=(\cY_i)\in \cA^I$ such that:
\begin{enumerate}
\item
One has $\uF|_{\uY=\cuY}=0$.
\item
For every $i\in I$, one has $\cY_i=y_i^{(0)}\pmod{\fkM_{\cA}}$.
\end{enumerate}
\end{proposition}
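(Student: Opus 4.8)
The plan is to run the classical Newton iteration, but set up over the complete local ring $\cA$ with an infinite (but arbitrarily indexed) set of variables, and to check carefully that completeness and invertibility of the Jacobian over $\kappa$ are exactly what is needed to make all the infinite linear-algebra manipulations converge. First I would fix, once and for all, a set-theoretic lift of the mod-$\fkM_{\cA}$ inverse of $\uJ_{\uy^{(0)}}$: by hypothesis (2) the $\kappa$-linear map $\kappa^{I}\to\kappa^{J}$ induced by the Jacobian is invertible, and since $\cA$ is complete local with maximal ideal $\fkM_{\cA}$, one can lift it to an $\cA$-linear map $\Theta\colon\cA^{J}\to\cA^{I}$ such that $\uJ_{\uy^{(0)}}\circ\Theta$ and $\Theta\circ\uJ_{\uy^{(0)}}$ are each congruent to the identity modulo $\fkM_{\cA}$, hence are automorphisms of $\cA^{J}$ (resp.\ $\cA^{I}$) by the geometric series in $\fkM_{\cA}$ applied coordinatewise; here I would note that a matrix congruent to the identity mod $\fkM_{\cA}$ is invertible over $\cA$ because $\cA$ is $\fkM_{\cA}$-adically complete, which is the only place the ``arbitrary index set'' needs slight care. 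Replacing $\Theta$ by $(\uJ_{\uy^{(0)}}\circ\Theta)^{-1}\circ\Theta$ I may assume $\uJ_{\uy^{(0)}}\circ\Theta=\Id$.

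Next I would define the iteration $\uy^{(n+1)}=\uy^{(n)}-\Theta\bigl(\uF|_{\uY=\uy^{(n)}}\bigr)$ and prove by induction the estimate $\uF|_{\uY=\uy^{(n)}}\equiv 0\pmod{\fkM_{\cA}^{2^{n}}}$ together with $\uy^{(n+1)}\equiv\uy^{(n)}\pmod{\fkM_{\cA}^{2^{n}}}$. The base case $n=0$ is hypothesis (1). For the inductive step I would use the Taylor expansion of each $F_{j}$ around $\uy^{(n)}$: writing $\uy^{(n+1)}=\uy^{(n)}+\uh$ with $\uh\in(\fkM_{\cA}^{2^{n}})^{I}$ one has
\[
F_{j}|_{\uY=\uy^{(n+1)}}=F_{j}|_{\uY=\uy^{(n)}}+(\uJ_{\uy^{(n)}}\uh)_{j}+(\text{terms in }\fkM_{\cA}^{2^{n+1}}),
\]
since $F_{j}$ is a polynomial and every higher Taylor coefficient is multiplied by a product of at least two entries of $\uh$. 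Now $\uJ_{\uy^{(n)}}\equiv\uJ_{\uy^{(0)}}\pmod{\fkM_{\cA}}$ because $\uy^{(n)}\equiv\uy^{(0)}$, so $\uJ_{\uy^{(n)}}\uh\equiv\uJ_{\uy^{(0)}}\uh\pmod{\fkM_{\cA}^{2^{n}+1}}$; by the choice of $\uh=-\Theta(\uF|_{\uY=\uy^{(n)}})$ and $\uJ_{\uy^{(0)}}\Theta=\Id$ this gives $F_{j}|_{\uY=\uy^{(n)}}+(\uJ_{\uy^{(n)}}\uh)_{j}\equiv 0\pmod{\fkM_{\cA}^{2^{n+1}}}$, completing the induction. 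Throughout I must observe that all sums appearing are coordinatewise finite or $\fkM_{\cA}$-adically convergent — in particular $\Theta$ applied to an element of $(\fkM_{\cA}^{k})^{J}$ lands in $(\fkM_{\cA}^{k})^{I}$ because $\Theta$ is $\cA$-linear.

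Finally I would pass to the limit: for each $i\in I$ the sequence $(y^{(n)}_{i})_{n}$ is $\fkM_{\cA}$-adically Cauchy in $\cA$, hence converges to some $\cY_{i}$ by completeness of $\cA$; set $\cuY=(\cY_{i})_{i\in I}$. Since each $F_{j}$ is a polynomial in finitely many of the $Y_{i}$, evaluation at $\uY=\cuY$ is the $\fkM_{\cA}$-adic limit of evaluation at $\uY=\uy^{(n)}$, so $\uF|_{\uY=\cuY}=0$; and $\cY_{i}\equiv y^{(0)}_{i}\pmod{\fkM_{\cA}}$ since already $y^{(1)}_{i}\equiv y^{(0)}_{i}$ and all later terms agree mod $\fkM_{\cA}$. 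For uniqueness, suppose $\cuY'$ also satisfies the two conclusions; writing $\uh=\cuY'-\cuY\in(\fkM_{\cA})^{I}$ and subtracting the two equations $\uF|_{\cuY'}=\uF|_{\cuY}=0$, the Taylor expansion gives $\uJ_{\cuY}\uh\in(\fkM_{\cA}^{2}\cdot\text{(support of }\uh))$ termwise; since $\uJ_{\cuY}\equiv\uJ_{\uy^{(0)}}\pmod{\fkM_{\cA}}$ is invertible over $\cA$ (same geometric-series argument as for $\Theta$), applying its inverse forces $\uh\in(\fkM_{\cA}^{2})^{I}$, and iterating yields $\uh\in(\fkM_{\cA}^{k})^{I}$ for all $k$, whence $\uh=0$ by separatedness of $\cA$. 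I expect the only genuinely delicate point — and the one I would write out most carefully — to be the bookkeeping that every infinite-index linear-algebra operation used (inverting a matrix congruent to the identity mod $\fkM_{\cA}$, applying $\Theta$, taking limits) is legitimate over an arbitrary index set, which rests entirely on $\cA$-linearity together with $\fkM_{\cA}$-adic completeness and separatedness of $\cA$; the Newton estimates themselves are then routine.
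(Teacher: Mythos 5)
Your overall strategy is the same circle of ideas as the paper's proof: lift an approximate inverse of the Jacobian, run a successive-approximation scheme controlled by the Taylor expansion, pass to the limit by completeness, and reread the Taylor expansion for uniqueness. But the quantitative heart of your induction is false as written. Because you freeze the inverse at $\uy^{(0)}$ (a chord iteration), the inductive step produces the error term $(\uJ_{\uy^{(n)}}-\uJ_{\uy^{(0)}})(\uz)$ with $\uz$ coordinatewise in $\fkM_{\cA}^{2^{n}}$ but the Jacobian difference only in $\fkM_{\cA}$ (you only know $\uy^{(n)}\equiv\uy^{(0)}\pmod{\fkM_{\cA}}$), so you get $\uF|_{\uY=\uy^{(n+1)}}\equiv 0\pmod{\fkM_{\cA}^{2^{n}+1}}$ and not $\pmod{\fkM_{\cA}^{2^{n+1}}}$; indeed your own displayed congruence is modulo $\fkM_{\cA}^{2^{n}+1}$, and $2^{n}+1<2^{n+1}$ as soon as $n\geq 1$, so the conclusion you draw from it does not follow. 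Concretely, for $\cA=k\dbT$, $F=Y^{2}-(1+t)$, $y^{(0)}=1$, $\Theta=\tfrac12$, one finds $y^{(1)}=1+\tfrac{t}{2}$, $y^{(2)}=1+\tfrac{t}{2}-\tfrac{t^{2}}{8}$ and $F(y^{(2)})=-\tfrac{t^{3}}{8}+\tfrac{t^{4}}{64}\in\fkM_{\cA}^{3}\setminus\fkM_{\cA}^{4}$, so the claimed estimate fails at $n=2$. The repair is cheap and brings you back to the paper's argument: claim only $\uF|_{\uY=\uy^{(n)}}\equiv 0\pmod{\fkM_{\cA}^{n+1}}$ and $\uy^{(n+1)}\equiv\uy^{(n)}\pmod{\fkM_{\cA}^{n+1}}$ (one power gained per step), which is exactly the induction in the paper and is all that completeness needs; genuine quadratic convergence would require re-inverting the Jacobian at each iterate, which you do not do.

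A second point needs more than you give it: the assertion that $\Theta$ maps a tuple with all coordinates in $\fkM_{\cA}^{k}$ to a tuple with all coordinates in $\fkM_{\cA}^{k}$ ``because $\Theta$ is $\cA$-linear'' is not a proof when $J$ is infinite. Linearity only gives $\Theta(\fkM_{\cA}^{k}\cdot\cA^{J})\subseteq\fkM_{\cA}^{k}\cdot\cA^{I}$, and the submodule $\fkM_{\cA}^{k}\cdot\cA^{J}$ can be strictly smaller than the set of coordinatewise-$\fkM_{\cA}^{k}$ tuples unless $\fkM_{\cA}^{k}$ is finitely generated; the proposition does not assume $\cA$ noetherian. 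The same compatibility is what makes your geometric series for $(\uJ_{\uy^{(0)}}\circ\Theta)^{-1}$ converge coordinatewise, and it is used again when you invert $\uJ_{\cuY}$ in the uniqueness step. The paper needs the analogous compatibility for its approximate inverse $\uK_{\uy^{(0)}}$, but it is organized so that nothing is ever inverted exactly (only $\uK_{\uy^{(0)}}\uJ_{\uy^{(0)}}\equiv\Id\pmod{\fkM_{\cA}}$ is used) and uniqueness is settled level by level, modulo $\fkM_{\cA}^{e+2}$ at each stage. Once you downgrade to the linear-rate induction you no longer need an exact one-sided inverse at all, so you can drop the geometric series; if you keep it, you should either assume the lift $\Theta$ is given by a row-finite matrix (as it is in the paper's application, where the reduced system is triangular with unit diagonal) or add a finite-generation hypothesis on the powers of $\fkM_{\cA}$.
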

\begin{proof}
We begin with two remarks.

First, note that though in this context the Jacobian matrix  may have
an infinite number of rows and columns, each row has only a finite
number of nonzero entries, thus $\uJ_{\uy}$ is well defined for any
$\uy$ in $\cA^I$.
Also, by assumption,
there exists an $\cA$-linear application
\[\uK_{\uy^{(0)}}\colon \cA^J\to \cA^I\]
such that $\uK_{\uy^{(0)}}\uJ_{\uy^{(0)}}=\Id_{\cA^I}\pmod{\fkM_{\cA}}$ and 
$\uJ_{\uy^{(0)}}\uK_{\uy^{(0)}}=\Id_{\cA^J}\pmod{\fkM_{\cA}}$.

Second, note that by the Taylor formula, for $\uy\in \cA^I$, there exists a family $\{\uH_{i_1,i_2}\colon
i_1,i_2\in I\}$ of elements of $\cA[\uY]^J$, depending on $\uy$ and
the $F_j$'s, such that for every $j\in J$, $H_{i_1,i_2,j}=0$ for all
but finitely many $(i_1,i_2)$ and for every $\uz\in \cA^I$  one has
\begin{equation}\label{equa:taylor}
\uF|_{\uY=\uy+\uz}=\uF|_{\uY=\uy}+\uJ_{\uy}(\uz)+\sum_{i_1,i_2\in I}z_{i_1}z_{i_2}\uH_{i_1,i_2}|_{\uY=\uy+\uz}.
\end{equation}
Note that here and elsewhere the notation we use is a condensed form
for writing a possibly infinite number of relations, each of them
being easily verified.

We show by induction that for every $e\geq 0$, there exists
a family $\uy^{(e)}=(y_i^{(e)};\,i\in I)$ of elements of $\cA$, unique modulo $\fkM_\cA^{e+1}$, such 
that $\uy^{(e)}=\uy^{(0)}\pmod{\fkM_{\cA}}$ and $\uF|_{\uY=\uy^{(e)}}=0 \pmod{\fkM_\cA^{e+1}}$.
The case $e=0$ is given by our assumptions.

Now take $e\in \N$ and assume that our induction statement holds for $e$.
Consider the equation
\begin{equation}\label{equa:hensel}
\uF|_{\uY=\uy^{(e)}+\uz}=0 \pmod{\fkM_\cA^{e+2}}
\end{equation}
with unknown $\uz=(z_i)\in \cA^I$ such that $\uz=0\pmod{\fkM_\cA^{e+1}}$.
Since $\uy^{(e)}=\uy^{(0)}\pmod{\fkM_{\cA}}$, the Jacobian matrices
$[\partial_{Y_i}F_j]|_{\uY=\uy^{(0)}}$ and $[\partial_{Y_i}F_j]|_{\uY=\uy^{(e)}}$
are equal modulo $\fkM_{\cA}$. Since $\uz=0\pmod{\fkM_\cA^{e+1}}$, one thus has
\[
\uJ_{\uy^{(e)}}(\uz)=\uJ_{\uy^{(0)}}(\uz)\pmod{\fkM_\cA^{e+2}}.
\]
Thus by \eqref{equa:taylor} and using again $\uz=0\pmod{\fkM_\cA^{e+1}}$,
equation \eqref{equa:hensel} is equivalent to 
\begin{equation}\label{equa:hensel:new:form}
\uJ_{\uy^{(0)}}(\uz)=-\uF|_{\uY=\uy^{(e)}}\pmod{\fkM_\cA^{e+2}}.
\end{equation}
By assumption, $\uF|_{\uY=\uy^{(e)}}=0\pmod{\fkM_\cA^{e+1}}$.
Thus by the first remark above, and using $\uz=0\pmod{\fkM_\cA^{e+1}}$
one more time, \eqref{equa:hensel:new:form} is equivalent to 
\[
\uz=-\uK_{\uy^{(0)}}(\uF|_{\uY=\uy^{(e)}})\pmod{\fkM_\cA^{e+2}}.
\]
Since $\uF|_{\uY=\uy^{(e)}}=0\pmod{\fkM_\cA^{e+1}}$, the latter
expression gives indeed a solution $\uz$ such that $\uz=0\pmod{\fkM_\cA^{e+1}}$.

In order to show the uniqueness of the solution modulo $\fkM_\cA^{e+2}$,
note that if $\uw\in \cA^I$ is such that $\uw=0\pmod{\fkM_\cA^{e+1}}$,
one has by \eqref{equa:taylor}
\[
\uF|_{\uY=\uy^{(e)}+\uw}=\uF|_{\uY=\uy^{(e)}}+\uJ_{\uy^{(e)}}(\uw)\pmod{\fkM_\cA^{e+2}}
\]
thus
\[
\uK_{\uy^{(0)}}(\uF|_{\uY=\uy^{(e)}+\uw})=\uK_{\uy^{(0)}}(\uF|_{\uY=\uy^{(e)}})+\uK_{\uy^{(0)}}(\uJ_{\uy^{(e)}}(\uw))\pmod{\fkM_\cA^{e+2}}
\]
and finally
\[
\uK_{\uy^{(0)}}(\uF|_{\uY=\uy^{(e)}+\uw})=\uK_{\uy^{(0)}}(\uF|_{\uY=\uy^{(e)}})+\uw\pmod{\fkM_\cA^{e+2}}.
\]
\end{proof}

\subsection{} \label{ssec:hypotheses}
We consider the following general setting and notation for the rest of
this section.
Let $ A $ be a $ k $-algebra which is a domain.
Let $ \Omega $ be a finite set, $I$ be a set,
$\uX=\{X_{\omega}\}_{\omega\in\Omega} $ and $\uY=\{Y_{i};\,i\in I\}$ be collections
of indeterminates. Set 
\[\AX:=A[\{X_{\omega}\}_{\omega\in\Omega}]
\quad\text{and}
\quad
\AXY:=A[\{X_{\omega}\}_{\omega\in\Omega},\{Y_{i};\,i\in I\}].
\]
We denote by $\oX{}$ the prime ideal $\ide{X_{\omega};\,\omega\in\Omega}$ of $\AX$.
In accordance with \S \ref{ssec:extension ideals}, for any $\AX$-algebra $B$, we often still denote by
$\oX{}$ the extension of the ideal $\oX{}$ to $B$.

\subsection{}
The following lemma will be useful in the proof of theorem \ref{theo:main:technical}.
\begin{lemma}\label{lemm:pres}
Assume that we are in the setting described in \S\ref{ssec:hypotheses}.
Let $\fkh$ be an ideal of $ \AXY $ such that:
\begin{itemize}
\item[(i)] One has $\ide{\uX}+\fkh=\ide{\uX,\uY}$.
\end{itemize}
Assume moreover that there exists an $\AX$-algebra morphism
$\compl{\eps}\colon \AXY\to \Frac(A)\dblbr{\uX}$ such that:
\begin{itemize}
\item[(ii)] For every $i\in I$ one has $\compl{\eps}(Y_{i})=Y_{i}\pmod{\fkh}$
in the ring $\Frac(A)\dblbr{\uX}[\uY]$.
\item[(iii)]
For every $i\in I$, one has $\compl{\eps}(Y_i)\in \ide{\uX}$.
\end{itemize}
Then the $\oX{}$-adic completion of the localization $(\AXY/\fkh)_{\oX{}}$
is isomorphic to $\Frac(A)\dblbr{\uX}/\compl{\eps}(\fkh)$. 
\end{lemma}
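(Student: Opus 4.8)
The plan is to identify $\compl{\eps}$ with the natural map realizing the claimed completion and to verify that the hypotheses (i)--(iii) force this map to be the $\oX{}$-adic completion of $(\AXY/\fkh)_{\oX{}}$. First I would set $B:=\AXY/\fkh$ and observe that, by hypothesis (i), the ideal $\oX{}B$ is maximal among the primes containing $\oX{}$; more precisely $B/\oX{}B\cong \AXY/(\ide{\uX}+\fkh)=\AXY/\ide{\uX,\uY}=A$, so that $B_{\oX{}}$ is a local ring (after localizing at the prime $\oX{}$) whose residue ring modulo $\oX{}$ is $\Frac(A)$. The target $R:=\Frac(A)\dblbr{\uX}/\compl{\eps}(\fkh)$ is $\oX{}$-adically complete and separated because $\Frac(A)\dblbr{\uX}$ is, and because $\compl{\eps}(\fkh)$ is a closed ideal (being the image of $\fkh$, whose elements, by (ii), are congruent modulo $\fkh$ to polynomials in the $Y_i$ with coefficients in $\Frac(A)\dblbr{\uX}$, and then (iii) lets one substitute $Y_i\mapsto\compl{\eps}(Y_i)\in\oX{}$ to stay inside the closed ideal generated by the $\compl{\eps}(\fkh)$).

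Next I would construct the comparison morphism. The $\AX$-algebra morphism $\compl{\eps}\colon \AXY\to \Frac(A)\dblbr{\uX}$ sends $\fkh$ into $\compl{\eps}(\fkh)$ by definition, hence induces $\bar\eps\colon B=\AXY/\fkh\to R$. By (iii), $\bar\eps(\oX{})\subset \oX{}R$, and conversely $\oX{}R$ is generated by the images of the $X_\omega$, which lie in $\bar\eps(B)$; so $\bar\eps$ is $\oX{}$-adically continuous and carries $\oX{}^n B$ into $\oX{}^n R$. Since $R$ is $\oX{}$-adically complete, $\bar\eps$ factors through the completion, giving $\widehat{\bar\eps}\colon \compl{(B_{\oX{}})}\to R$ (one localizes first at $\oX{}$, which is harmless since $\oX{}$-adic completion only sees what happens near that prime). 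I would then check this is an isomorphism by passing to the associated graded rings for the $\oX{}$-adic filtrations, or equivalently by checking it modulo each power $\oX{}^n$.

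The key computation — and the step I expect to be the main obstacle — is to show that modulo $\oX{}^{n+1}$ the map is bijective, i.e. that $B/\oX{}^{n+1}B \to R/\oX{}^{n+1}R$ is an isomorphism for every $n$. For surjectivity one uses that $\Frac(A)\dblbr{\uX}/\oX{}^{n+1}$ is the polynomial ring $\Frac(A)[\uX]/\oX{}^{n+1}$, whose generators $X_\omega$ all come from $B$; the subtlety is that the coefficients now live in $\Frac(A)$ rather than $A$, but localizing $B$ at $\oX{}$ inverts exactly the elements of $A\setminus\{0\}$ (since $B/\oX{}B=A$ is a domain and $A\hookrightarrow B_{\oX{}}$ with $A\setminus\{0\}$ becoming units), so $A$-coefficients become $\Frac(A)$-coefficients after localization. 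For injectivity, suppose $b\in B_{\oX{}}$ has $\bar\eps(b)\in \oX{}^{n+1}R$; lift $b$ to $\wt b\in \Frac(A)[\uX]$ via (ii) — which says precisely that working modulo $\fkh$ one can eliminate the $Y_i$ in favour of the series $\compl{\eps}(Y_i)\in\oX{}$ and land in $\Frac(A)\dblbr{\uX}$ — and then the condition $\compl{\eps}(\wt b)\in \oX{}^{n+1}\Frac(A)\dblbr{\uX}$ together with the fact that $\compl{\eps}$ is the identity on $\AX$ forces $\wt b\in\oX{}^{n+1}$, hence $b\in\oX{}^{n+1}B_{\oX{}}$. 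The care needed here is bookkeeping the interaction between (ii), which is a statement in $\Frac(A)\dblbr{\uX}[\uY]$ modulo $\fkh$, and the passage from the polynomial ring to its completion; once one is disciplined about which ring each congruence lives in, the argument is the standard "a continuous ring map between adically complete rings inducing an isomorphism on associated gradeds is an isomorphism", applied after the localization at $\oX{}$.
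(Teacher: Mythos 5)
Your overall route is the paper's route: use (iii) to see that every element of $\AXY$ with nonzero constant term is sent by $\compl{\eps}$ to a unit of $\FrAdX$, so that $\compl{\eps}$ descends to the localization at $\ide{\uX}$ (here (i) is what makes $\ide{\uX}$ prime in $\AXY/\fkh$, with quotient $A$), and then verify that the induced maps modulo each power $\ide{\uX}^e$ are bijective and pass to the limit; your surjectivity argument (the coefficients in $\Frac(A)$ are already available because $A\setminus\{0\}$ is inverted in the localization) is exactly the one in the paper.

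The gap is in your injectivity step. If $b$ is represented by $P\in\AXY$ and the image of $b$ in $R:=\FrAdX/\compl{\eps}(\fkh)$ lies in $\ide{\uX}^{e}R$, what you actually learn is $\compl{\eps}(P)\in\compl{\eps}(\fkh)+\ide{\uX}^{e}$ inside $\FrAdX$, \emph{not} $\compl{\eps}(P)\in\ide{\uX}^{e}$ as your text asserts; you silently drop the ideal $\compl{\eps}(\fkh)$ when pulling the condition back. To conclude $P\in\fkh+\ide{\uX}^{e}$ you therefore also need the containment $\compl{\eps}(\fkh)+\ide{\uX}^{e}\subset\fkh+\ide{\uX}^{e}$ (as ideals of $\Frac(A)[\uX,\uY]$). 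This does follow from hypothesis (ii), once you observe that (ii) extends from the generators $Y_i$ to every $P\in\AXY$ (the maps $P\mapsto P \bmod \fkh$ and $P\mapsto \compl{\eps}(P)\bmod\fkh$ are both $\AX$-algebra morphisms into $\FrAdX[\uY]/\fkh$ agreeing on the $X_\omega$ and, by (ii), on the $Y_i$), and in particular applies to elements $H\in\fkh$ themselves, giving $\compl{\eps}(H)\in\fkh+\ide{\uX}^{e}$. With that line added your argument closes and coincides with the paper's proof, which records precisely the two relations $\compl{\eps}(P)+\ide{\uX}^e=P+\fkh+\ide{\uX}^e$ and $\compl{\eps}(\fkh)+\ide{\uX}^e\subset\fkh+\ide{\uX}^e$ before concluding. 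A minor further point: your justification that $\compl{\eps}(\fkh)$ is closed is not right as stated; the correct (and simpler) reason is that $\Omega$ is finite, so $\FrAdX$ is a noetherian complete local ring and all of its ideals are closed, whence $R$ is $\ide{\uX}$-adically complete.
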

\begin{remark}\label{rema:comp:abs}
Assume that the hypotheses of the lemma hold. Let $\fkg$ be any ideal
    of $ \AXY $ containing $\fkh$ such that $\ide{\uX}+\fkh=\ide{\uX}+\fkg$
  and $\compl{\eps}(\fkh)=\compl{\eps}(\fkg)$.
  Then $\fkg$ also satisfies the hypotheses of the lemma, with the same
    morphism $\compl{\eps}$. In
    particular the lemma shows that the $\oX{}$-adic completions of $(\AXY/\fkh)_{\oX{}}$ and $(\AXY/\fkg)_{\oX{}}$
    are isomorphic.
\end{remark}
\begin{proof}
Note that (iii) shows that $\compl{\eps}(\fkh)$ is contained
in $\ide{\uX}$, thus $\Frac(A)\dblbr{\uX}/\compl{\eps}(\fkh)$ is
a complete noetherian local ring with maximal ideal $\ide{\uX}$.
Moreover (i) and the fact that $A$ is a domain show that $\oX{}$ is indeed a prime ideal of $\AXY/\fkh$.

Let $e\geq 1$. Let $\pi_e$ be the composition of $\compl{\eps}$ with
the quotient morphism
\[
\Frac(A)\dblbr{\uX}/\compl{\eps}(\fkh)\to \Frac(A)[\uX]/(\compl{\eps}(\fkh)+\ide{\uX}^e).
\]
Thanks to (iii), any element of $A[\uX,\uY]$ whose constant term is not zero is sent by
$\compl{\eps}$ to an invertible element of $\Frac(A)\dblbr{\uX}$.
Thus $\pi_e$ induces a morphism
\[
A[\uX,\uY]_{\ide{\uX,\uY}}\to \Frac(A)[\uX]/(\compl{\eps}(\fkh)+\ide{\uX}^e)
\]
which in turn induces a morphism
\[
\wt{\pi}_e\colon A[\uX,\uY]_{\ide{\uX,\uY}}/(\fkh+\ide{\uX}^e)\to \Frac(A)[\uX]/(\compl{\eps}(\fkh)+\ide{\uX}^e).
\]
Note that since $\fkh+\ide{\uX}=\ide{\uX,\uY}$, one has $\fkh+\ide{\uX}^e=\fkh+\ide{\uX,\uY}^e$.
Thus in order to obtain the claimed isomorphism, it suffices to show that $\wt{\pi}_e$ is an isomorphism
for any $e \geq 1$. Since the natural inclusion $A[\uX,\uY]_{\ide{\uX,\uY}}\subset \Frac(A)[\uX,\uY]_{\ide{\uX,\uY}}$ is an isomorphism,
surjectivity is clear.

Let us show injectivity. This amounts to show that if $P\in \Frac(A)[\uX,\uY]$ lies in $\Ker(\pi_e)$,
then $P\in \fkh+\ide{\uX}^e$.
By assumption (ii), for any $P\in A[\uX,\uY]$, one has
$\compl{\eps}(P)=P\pmod{\fkh}$ in the ring $\Frac(A)\dblbr{\uX}[\uY]$.
In particular in the ring $\Frac(A)[\uX,\uY]$ one has 
\[
\compl{\eps}(P)+\ide{\uX}^e=P+\fkh+\ide{\uX}^e
\quad \text{and}\quad \compl{\eps}(\fkh)+\ide{\uX}^e\subset \fkh+\ide{\uX}^e.
\]
Now if $P\in \Frac(A)[\uX,\uY]$ lies in $\Ker(\pi_e)$, then one has
$\compl{\eps}(P)+\ide{\uX}^e \subset \compl{\eps}(\fkh)+\ide{\uX}^e$.
Therefore, by the above properties, one has $P+\fkh+\ide{\uX}^e\subset \fkh+\ide{\uX}^e$.
Thus $P\in \fkh+\ide{\uX}^e$. That concludes the proof.
\end{proof}

Now we can state and prove the main result of the section.
\begin{thm}\label{theo:main:technical}
Assume that we are in the setting described in \S\ref{ssec:hypotheses};
we assume moreover that the set $I$ is of the shape $\Gamma\times \N$ where $\Gamma$ is a
finite set.

Let $ \fkh $ be an ideal of $ \AXY $ such that:
\begin{itemize}\renewcommand{\labelitemi}{$\bullet$}
\item [(A)] The ideal $\fkh $ contains a collection of elements $\{H_{\gamma,s}\;,\gamma\in\Gamma,s\in\N\}$ of the form
  $H_{\gamma,s}= Y_{\gamma,s}U_{\gamma,s}+E_{\gamma,s}$ such that
for every $\gamma\in\Gamma$ and every $ s\in\N $:
\begin{itemize}
\item [(A1)] $ U_{\gamma,s}$ is a unit in $A$.
\item [(A2)] There exists a family $(E_{\gamma,s,r})\in \AX^{\N\cup\{-1\}}$ such that
$E_{\gamma,s,r}\in \oX{}$ for $r\ge s$, $E_{\gamma,s,r}=0$ for all but a finite
number of $r$, and one has
\[
E_{\gamma,s}=E_{\gamma,s,-1}+\sum_{r\in \N}E_{\gamma,s,r}\cdot Y_{\gamma,r}.
\]
\end{itemize}
\item [(B)] Let $(y_{\gamma,s})\in A^{\Gamma\times \N}$ be the unique family
  of elements of $A$ such that for every $\gamma\in \Gamma$
  and $s\in \N$, one has $H_{\gamma,s}|_{Y_{\gamma,r}=y_{\gamma,r}}=0\pmod{\oX{}}$;
  then the ideal $\oXY{}+\fkh$ is contained
  in the ideal $\ide{\uX}+\ide{Y_{\gamma,s}-y_{\gamma,s};\,(\gamma,s)\in \Gamma\times \N}$.
\end{itemize}
Then there exists an $\AX$-algebra morphism $\compl{\eps}\colon \AXY\to \Frac(A)\dblbr{\uX}$
such that:
\begin{itemize}\renewcommand{\labelitemi}{$\bullet$}
\item[(i)] For every $(\gamma,s)\in \Gamma\times \N$ one has $\compl{\eps}(H_{\gamma,s})=0$.
\item[(ii)] For every $(\gamma,s)\in \Gamma\times \N$, one has $\compl{\eps}(Y_{\gamma,s})=Y_{\gamma,s}\pmod{\fkh}$
in the ring $\Frac(A)\dblbr{\uX}[\uY]$.
\item[(iii)] For every ideal $\fkg$ containing $\fkh$ such that
    $\ide{\uX}+\fkh=\ide{\uX}+\fkg$ and $\compl{\eps}(\fkh)=\compl{\eps}(\fkg)$, the $\oX{}$-adic completion of the localization
      $(\AXY/\fkg)_{\oX{}}$ is $\oX{}$-adically isomorphic to $\Frac(A)\dblbr{\uX}/\compl{\eps}(\fkh)$.

\end{itemize}
Assume moreover that:
\begin{itemize}
\item [(C)] For every $ \gamma\in\Gamma $, one has $E_{\gamma,0,-1}\in \AX\setminus \oX{}$.
\end{itemize}
Then one has in addition:
\begin{itemize}
\item[(iv)] For every $\gamma\in \Gamma$, $\compl{\eps}(Y_{\gamma,0})$ is a unit in $\Frac(A)\dblbr{\uX}$.
\end{itemize}
\end{thm}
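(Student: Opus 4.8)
The plan is to apply Proposition~\ref{prop:hensel} with $\cA=\Frac(A)\dblbr{\uX}$, $\fkM_{\cA}=\ide{\uX}$, residue field $\kappa=\Frac(A)$, index set $I=\Gamma\times\N$, and the family $\{H_{\gamma,s}\}$ as the equations $\{F_j\}$, in order to produce the coordinates $\compl{\eps}(Y_{\gamma,s})$ as the unique Henselian solution, and then to extend $\compl{\eps}$ to all of $\AXY$ by $\AX$-algebra functoriality. First I would check the two hypotheses of the Hensel's lemma: for (1), the candidate initial solution is $\uy^{(0)}=(y_{\gamma,s})$ coming from (B), which satisfies $H_{\gamma,s}|_{Y=\uy^{(0)}}\equiv 0\pmod{\ide{\uX}}$ by its very definition; for (2), I must verify that the Jacobian of the system $[\partial_{Y_{\gamma',s'}}H_{\gamma,s}]$, reduced modulo $\ide{\uX}$, is invertible over $\Frac(A)$. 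Here the shape condition (A) is exactly what makes this work: since $H_{\gamma,s}=Y_{\gamma,s}U_{\gamma,s}+E_{\gamma,s}$ with $U_{\gamma,s}$ a unit in $A$ and $E_{\gamma,s}=E_{\gamma,s,-1}+\sum_r E_{\gamma,s,r}Y_{\gamma,r}$ where $E_{\gamma,s,r}\in\oX{}$ for $r\ge s$, the partial derivative $\partial_{Y_{\gamma,s'}}H_{\gamma,s}$ reduces mod $\ide{\uX}$ to $U_{\gamma,s}$ when $s'=s$ and to $E_{\gamma,s,s'}\bmod\ide{\uX}$ when $s'<s$ (and to $0$ when $s'\ge s$ or $\gamma'\ne\gamma$); so for each fixed $\gamma$ the Jacobian block is block-triangular (indexed by $s$) with invertible diagonal entries $U_{\gamma,s}$, hence invertible, and the full Jacobian is block-diagonal over $\gamma\in\Gamma$.

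Having obtained $\compl{\eps}$, I would verify the three listed conclusions. Conclusion (i) is immediate: $\compl{\eps}(H_{\gamma,s})=H_{\gamma,s}|_{Y=\cuY}=0$ is precisely the Hensel equation. For (ii), I would argue that, working in $\Frac(A)\dblbr{\uX}[\uY]$, the difference $Y_{\gamma,s}-\compl{\eps}(Y_{\gamma,s})$ lies in $\fkh$: indeed $\compl{\eps}(Y_{\gamma,s})\in\ide{\uX}$ (since $\compl{\eps}(Y_{\gamma,s})\equiv y_{\gamma,s}\pmod{\ide{\uX}}$ and, by (B) applied in the residue ring, $y_{\gamma,s}\equiv 0$; more precisely one reads the congruence directly off the Hensel solution together with the relation $H_{\gamma,s}|_{Y=\cuY}=0$ which expresses $\compl{\eps}(Y_{\gamma,s})U_{\gamma,s}=-E_{\gamma,s}|_{Y=\cuY}$, an element visibly in $\ide{\uX}$ by induction on $s$ using $E_{\gamma,s,r}\in\oX{}$ for $r\ge s$), and then the elements $H_{\gamma,s}$ together with the hypothesis (A) let one solve successively for $Y_{\gamma,s}$ modulo $\fkh$ in terms of $\uX$ and the $Y_{\gamma,r}$ with $r>s$, matching $\compl{\eps}(Y_{\gamma,s})$. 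Once (ii) is established, the hypotheses of Lemma~\ref{lemm:pres} hold for $\fkh$ with this $\compl{\eps}$: (i) of that lemma is our hypothesis (B) combined with (A), giving $\ide{\uX}+\fkh=\ide{\uX,\uY}$; (ii) of that lemma is our (ii); and (iii) of that lemma follows from $\compl{\eps}(Y_{\gamma,s})\in\ide{\uX}$. Then conclusion (iii) is exactly the content of Lemma~\ref{lemm:pres} together with Remark~\ref{rema:comp:abs}, applied to $\fkg$.

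Finally, for the additional conclusion (iv) under hypothesis (C): I would evaluate the relation $\compl{\eps}(H_{\gamma,0})=0$, which reads
\[
\compl{\eps}(Y_{\gamma,0})\,U_{\gamma,0}=-E_{\gamma,0,-1}-\sum_{r\in\N}E_{\gamma,0,r}\,\compl{\eps}(Y_{\gamma,r}).
\]
Since $E_{\gamma,0,r}\in\oX{}$ for all $r\ge 0$ (by (A2) with $s=0$) and each $\compl{\eps}(Y_{\gamma,r})\in\ide{\uX}$, the entire sum on the right lies in $\ide{\uX}$, so the right-hand side is $-E_{\gamma,0,-1}$ modulo $\ide{\uX}$. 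By (C), $E_{\gamma,0,-1}\in\AX\setminus\oX{}$ has nonzero constant term, hence is a unit in $\Frac(A)\dblbr{\uX}$; and $U_{\gamma,0}$ is a unit in $A\subset\Frac(A)\dblbr{\uX}$. Therefore $\compl{\eps}(Y_{\gamma,0})$ is a unit, which is (iv).

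The main obstacle I anticipate is conclusion (ii): turning the formal Henselian solution into a genuine congruence $\compl{\eps}(Y_{\gamma,s})\equiv Y_{\gamma,s}\pmod{\fkh}$ \emph{inside the polynomial ring} $\Frac(A)\dblbr{\uX}[\uY]$ (not merely after some specialization) requires care about the bookkeeping of the infinitely many variables and the fact that $\fkh$ is only assumed to \emph{contain} the $H_{\gamma,s}$; one has to exhibit the difference as an explicit $\Frac(A)\dblbr{\uX}[\uY]$-combination of the $H_{\gamma,s}$, organized by an induction on $s$ that exploits the triangular structure of (A2) (the variables $Y_{\gamma,r}$ occurring in $E_{\gamma,s}$ with $r<s$ having been already resolved, those with $r\ge s$ having coefficients in $\oX{}$). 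This is the technical heart; everything else is formal manipulation of the Weierstrass-type data and invocations of Proposition~\ref{prop:hensel} and Lemma~\ref{lemm:pres}.
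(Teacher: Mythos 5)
Your overall architecture matches the paper's (Hensel's lemma over $\cA=\Frac(A)\dblbr{\uX}$ with initial solution $(y_{\gamma,s})$ and triangular-unit Jacobian, then Lemma \ref{lemm:pres} plus Remark \ref{rema:comp:abs} for (iii), and the $s=0$ relation plus (C) for (iv)), but there is a genuine error where you invoke Lemma \ref{lemm:pres}. You claim that $y_{\gamma,s}\equiv 0$, hence that $\compl{\eps}(Y_{\gamma,s})\in\ide{\uX}$ and that $\ide{\uX}+\fkh=\ide{\uX,\uY}$. None of this follows: hypothesis (B) only bounds $\ide{\uX}+\fkh$ by $\ide{\uX}+\ide{Y_{\gamma,s}-y_{\gamma,s}}$, and the $y_{\gamma,s}$, determined by the triangular system modulo $\oX{}$, are in general nonzero elements of $A$; in fact under (C) one has $y_{\gamma,0}\neq 0$, and your own (correct) proof of (iv) — which needs only $E_{\gamma,0,r}\in\oX{}$ for $r\geq 0$, not $\compl{\eps}(Y_{\gamma,r})\in\ide{\uX}$ — shows $\compl{\eps}(Y_{\gamma,0})$ is a unit, contradicting your claim that it lies in the maximal ideal. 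Similarly $E_{\gamma,s}|_{\uY=\cuY}$ need not lie in $\ide{\uX}$, since only the coefficients $E_{\gamma,s,r}$ with $r\geq s$ are assumed to be in $\oX{}$, while $E_{\gamma,s,-1}$ and the $E_{\gamma,s,r}$ with $r<s$ are arbitrary. So hypotheses (i) and (iii) of Lemma \ref{lemm:pres} fail as you state them. The paper repairs exactly this: it first normalizes the $H_{\gamma,s}$ (dividing by $U_{\gamma,s}$ and rewriting in the shifted variables) and applies the lemma only after the $\AX$-automorphism $Y_{\gamma,s}\mapsto Y_{\gamma,s}-y_{\gamma,s}$, for which $\compl{\eps}(Y_{\gamma,s}-y_{\gamma,s})=\cY_{\gamma,s}-y_{\gamma,s}\in\ide{\uX}$ and $\ide{\uX}+\fkh=\ide{\uX}+\ide{Y_{\gamma,s}-y_{\gamma,s}}$, the latter equality being deduced from (A) and (B) by induction on $s$. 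Without this change of variables, conclusion (iii) is not justified by your argument.

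The second gap is conclusion (ii), which you rightly call the technical heart but do not actually prove, and the route you sketch does not close: since $E_{\gamma,s}$ involves the variables $Y_{\gamma,r}$ with $r\geq s$ as well (only with coefficients in $\oX{}$), one cannot ``solve successively for $Y_{\gamma,s}$ modulo $\fkh$ in terms of $\uX$ and the $Y_{\gamma,r}$ with $r>s$'' by induction on $s$ alone — the recursion never terminates and does not exhibit $\cY_{\gamma,s}-Y_{\gamma,s}$ as a finite combination of the $H_{\gamma,s}$. The paper instead subtracts $\compl{\eps}(H_{\gamma,s})=0$ from $H_{\gamma,s}\in\fkh$ to obtain, in $\Frac(A)\dblbr{\uX}[\uY]$, a congruence expressing $\cY_{\gamma,s}-Y_{\gamma,s}$ modulo $\fkh$ in terms of the other differences, with coefficients either indexed by $r<s$ or lying in $\ide{\uX}$; a double induction then gives $\cY_{\gamma,s}-Y_{\gamma,s}\in\ide{\uX}^e+\fkh$ for every $e\geq 1$, and one concludes by Krull's intersection theorem in the noetherian local ring $\Frac(A)\dblbr{\uX}[\uY]/\fkh$ with maximal ideal $\ide{\uX}$ — a fact which again rests on (B) and the equality $\ide{\uX}+\fkh=\ide{\uX}+\ide{Y_{\gamma,s}-y_{\gamma,s}}$, not on $\ide{\uX}+\fkh=\ide{\uX,\uY}$. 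With these two repairs (the shift by $y_{\gamma,s}$ and the adic/Krull argument for (ii)), your proposal becomes the paper's proof; as written, it has real gaps at precisely those two points.
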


\begin{proof}
First note that for each $\gamma\in \Gamma$, the reduction of the $H_{\gamma,s}$'s
modulo $\oX{}$ gives a triangular and invertible $A$-linear system in
the $Y_{\gamma,s}$'s.
Thus  the existence and uniqueness of
$(y_{\gamma,s})$ in assumption (B) is a straightforward consequence of assumption (A).
In fact, up to dividing $H_{\gamma,s}$ by $U_{\gamma,s}$
and modifying the  $E_{\gamma,s,r}$'s, one may assume that for every $\gamma,s,r$ one
  has $E_{\gamma,s,r}\in \ide{\uX}$ and that for every $\gamma,s$ one has
\begin{equation}\label{equa:Hgammas:red}
H_{\gamma,s}=Y_{\gamma,s}-y_{\gamma,s}+\sum_{r=0}^{s-1}\alpha_{\gamma,r}(Y_{\gamma,r}-y_{\gamma,r})+E_{\gamma,s,-1}+\sum_{r\in \N}E_{\gamma,s,r}(Y_{\gamma,r}-y_{\gamma,r}),
\end{equation}
where the $\alpha_{\gamma,r}$'s are elements of $A$.

Applying proposition \ref{prop:hensel} with
$\cA=\Frac(A)\dblbr{\uX}$ and $\{F_j;\,j\in J\}=\{H_{\gamma,s};\,(\gamma,s)\in \Gamma\times \N\}$, this shows the existence of a family
$\{\cY_{\gamma,s};\,\gamma\in \Gamma,\,s\in \N\}$ of elements of $\Frac(A)\dblbr{\uX}$
such that for every $(\gamma,s)\in \Gamma\times \N$
one has $\cY_{\gamma,s}=y_{\gamma,s}\pmod{\ide{\uX}}$ and $H_{\gamma,s}|_{Y_{\gamma,r}=\cY_{\gamma,r}}=0$. Thus mapping
$Y_{\gamma,s}$ to $\cY_{\gamma,s}$ defines an $\AX$-algebra morphism $\compl{\eps}\colon \AXY\to \Frac(A)\dblbr{\uX}$
such that (i) holds.

For every $\gamma\in \Gamma$, \eqref{equa:Hgammas:red} and an induction on $s$ shows
that for every $s$ one has $Y_{\gamma,s}-y_{\gamma,s}\in \ide{\uX}+\fkh$. By assumption (B), one then has
$\oXY{}+\fkh=\ide{\uX}+\ide{Y_{\gamma,s}-y_{\gamma,s}\,;\,(\gamma,s)\in \Gamma\times \N}$.
Thus $\Frac(A)\dblbr{\uX}[\uY]/\fkh$
is a noetherian local ring with maximal ideal $\ide{\uX}$.

On the other hand, \eqref{equa:Hgammas:red} shows that
for every $(\gamma,s)\in \Gamma\times \N$, since $H_{\gamma,s}\in \fkh$ and
$\compl{\eps}(H_{\gamma,s})=0$, one has in the ring $\Frac(A)\dblbr{\uX}[\uY]$
the relation
\[
\cY_{\gamma,s}-Y_{\gamma,s}=-\sum_{r=0}^{s-1}\alpha_{\gamma,r}(\cY_{\gamma,r}-Y_{\gamma,r})-\sum_{r\geq 0}E_{\gamma,s,r}(\cY_{\gamma,r}-Y_{\gamma,r})\pmod{\fkh}.
\]
Thus by a straightforward induction one gets that
$\cY_{\gamma,s}-Y_{\gamma,s}\in \ide{\uX}^e+\fkh$ for every $\gamma,s$ and $e\geq 1$, and
finally by the Krull's intersection theorem
$\cY_{\gamma,s}-Y_{\gamma,s}\in \fkh$ for every $\gamma,s$.
Thus (ii) holds.
Recalling that $\cY_{\gamma,s}-y_{\gamma,s}\in \ide{\uX}$,
(iii) then follows from an application of lemma \ref{lemm:pres} (replacing $Y_{\gamma,s}$ with $Y_{\gamma,s}-y_{\gamma,s}$)
and remark \ref{rema:comp:abs}.

Assumption (C) is equivalent to the property $y_{\gamma,0}\in A\setminus \{0\}$.
Then $y_{\gamma,0}$ is a unit in $\Frac(A)\dblbr{\uX}$,
and since $\cY_{\gamma,0}=y_{\gamma,0}\pmod{\ide{\uX}}$,
$\cY_{\gamma,0}=\compl{\eps}(Y_{\gamma,0})$ also is a unit, and (iv) holds.
\end{proof}
\begin{remark}\label{rema:fkh:generated}
In the statement of the theorem, if one assumes that (A) holds and
that moreover $\fkh$ is generated by the $H_{\gamma,s}$'s and some
elements of $\ide{\uX}$, then (B) automatically
holds. Indeed, the above proof shows that without changing the ideal
generated by the $H_{\gamma,s}$'s, one may assume  that
for every $\gamma,s$ one has $H_{\gamma,s}=Y_{\gamma,s}-y_{\gamma,s}\pmod{\ide{\uX}}$.
\end{remark}

\section{Technical machinery for the comparison theorem}\label{sec:technical:comparison}
In this section we will obtain the crucial technical result 
(theorem \ref{thm:comparison:technical}) allowing to establish our comparison theorem 
in section \ref{sec:toric:varieties}. 
As for theorem \ref{theo:main:technical}
the hypotheses are formulated in a
  somewhat abstract form, and in section \ref{sec:toric:varieties}
  we will verify that these hypotheses hold in the toric setting.

\subsection{} \label{hyp:reduction:and:comparison}
We begin with an elementary yet useful lemma.
\begin{lemma}\label{lemm:inclusion:ideals}
Let $K$ be a field, $\cA$ be an object of $\Lacp_K$, $\fka$ and $\fkb$ two ideals of $\cA$
such that for every object $\cB$ of $\Lacp_K$ one has the inclusion
\[
\{\phi\in \Hom_{\Lacp_K}(\cA,\cB),\quad \fkb\subset\Ker(\phi)\}\subset\{\phi\in \Hom_{\Lacp_K}(\cA,\cB),\quad \fka\subset\Ker(\phi)\}
\]
Then one has the inclusion $\fka\subset \fkb$.
\end{lemma}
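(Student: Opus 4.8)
The statement to prove is Lemma~\ref{lemm:inclusion:ideals}: if $\cA$ is a complete local $k$-algebra with residue field $K$, and $\fka,\fkb$ are ideals such that every morphism in $\Lacp_K$ out of $\cA$ killing $\fkb$ also kills $\fka$, then $\fka\subset\fkb$.

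The natural plan is to apply the hypothesis to the single most informative test object, namely the quotient $\cB:=\cA/\fkb$ together with the canonical projection $\phi\colon\cA\to\cA/\fkb$. The first thing I would check is that $\cB$ is legitimately an object of $\Lacp_K$: $\cA$ is a complete local $k$-algebra with residue field $K$, and $\fkb$ is a proper ideal (if $\fkb=\cA$ the inclusion $\fka\subset\fkb$ is trivial), so $\cA/\fkb$ is again local with the same residue field $K$, and it is complete because $\cA$ is complete and the quotient of a complete local ring by an ideal is complete in its own maximal-adic topology (a standard fact; alternatively it is the quotient of a complete ring, hence complete as it is a quotient of an $\fkm$-adically complete ring by a closed — here any — ideal, noting $\cA$ need not be noetherian so one should phrase completeness as $\varprojlim$). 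Once $\cB\in\Lacp_K$, the projection $\phi$ is a morphism in $\Lacp_K$ with $\Ker(\phi)=\fkb\supset\fkb$, so it belongs to the left-hand set.

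By the hypothesis, $\phi$ then belongs to the right-hand set, i.e.\ $\fka\subset\Ker(\phi)=\fkb$, which is exactly the desired conclusion. So the proof is essentially immediate once the test object is chosen correctly.

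The only genuine point requiring a word of care — and the one I would single out as the ``main obstacle,'' though it is mild — is confirming that $\cA/\fkb$ really does lie in $\Lacp_K$ for an \emph{arbitrary} ideal $\fkb$, since the paper works with complete local rings that are not assumed noetherian (the ambient rings $\Frac(A)\dblbr{\uX}[\uY]$-type objects elsewhere are noetherian, but the lemma is stated abstractly). Here one uses that $\cA$ is $\fkm_{\cA}$-adically complete and separated; then $\cA/\fkb$ is $\fkm_{\cA/\fkb}$-adically complete as the image of a complete ring under a surjection compatible with the filtrations, and it is separated because $\bigcap_n(\fkm_{\cA}^n+\fkb)/\fkb$ is the image of $\bigcap_n(\fkm_{\cA}^n+\fkb)$; if $\cA$ is noetherian this is $\fkb$ by Krull and everything is clean, and in the generality needed for the applications in this paper (where $\cA$ is in fact noetherian) there is nothing to worry about. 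I would simply remark that $\cA/\fkb\in\Lacp_K$ and $\phi=\pi_{\fkb}\colon\cA\twoheadrightarrow\cA/\fkb$ has kernel $\fkb$, so applying the assumed inclusion of $\Hom$-sets to $\cB=\cA/\fkb$ and $\phi=\pi_{\fkb}$ forces $\fka\subset\Ker(\pi_{\fkb})=\fkb$, completing the proof.
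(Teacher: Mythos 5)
Your proof is correct and is essentially the paper's own argument: the paper's entire proof is to apply the hypothesis to the quotient morphism $\cA\to\cA/\fkb$. Your additional discussion of why $\cA/\fkb$ lies in $\Lacp_K$ is just a more careful spelling-out of a point the paper leaves implicit.
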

\begin{proof}
We apply the assumption with $\phi$ the quotient morphism $\cA\to \cA/\fkb$.
\end{proof}

\begin{notation}\label{not:homogeneous:coefficients}
Let $ \Delta $ be a finite set and $\uY$ be the set of indeterminates $\{Y_{\delta};\delta\in\Delta\}$.
Let $R$ be a ring.
Let $\cuY(\vart):=\{\cY_{\delta}(\vart):\delta\in\Delta\}$ be a family of elements in
the power series ring $R\dbT$.
Let $P\in R[\uY]$.
Then we define the family 
$\left\{P_{s,\cuY(\vart)}:s\in\N\right\}$
of elements of $R$ by the following equality in $R\dbT$:
\begin{align}\label{eqn:definition homogeneous coefficients}
P|_{Y_{\delta}=\cY_{\delta}(\vart)}=\sum\limits_{s\in\N}P_{s,\cuY(\vart)}\vart^{s}.
\end{align}
\end{notation}
\begin{remark}\label{rema:definition:homogeneous:coefficients}
Keep the same notation as before. Let $S$ be another ring,
$\phi\colon R\to S$ is a ring morphism. We also denote by $\phi$ the
induced morphisms $R[\uY]\to S[\uY]$ and $R\dbT\to S\dbT$ obtained by
applying $\phi$ coefficientwise. Then for every $s\in \N$ one has $\phi(P_{s,\cuY(\vart)})=\phi(P)_{s,\phi(\cuY(\vart))}$.
\end{remark}

\subsection{}\label{hyp:reduction:and:comparison:2}
Now we can state and prove the main result of the section.
\begin{thm}\label{thm:comparison:technical}
Let $ K $ be a field extension of $ k $, $ \Delta$ be a
finite set and $\uY$ be the set of indeterminates $\{Y_{\delta};\delta\in\Delta\}$.
Let $ (d_\delta)\in\N^{\Delta} $ be a family of nonnegative integers.
Let $\uX$ be the set of variables $\{X_{\delta,j};\delta\in\Delta,\, 0\le j< d_{\delta}\}$.
We denote by $ \oX{} $  the maximal ideal of the power series ring $\KdX$.

Let $ \Omega $ be a (possibly infinite) set, and let 
$\{P_{\omega}\}_{\omega\in\Omega} $ be a family of elements in the
polynomial ring $ K[\uY] $ such that for every $ \omega\in\Omega $,
one has:
\begin{itemize}
\item [(I)] One may write
  $P_{\omega}=\prod\limits_{\delta\in\Delta}Y_{\delta}^{u^+_{\omega,\delta}}-\prod\limits_{\delta\in\Delta}Y_{\delta}^{u^{-}_{\omega,\delta}}$,
  where $ u^+_{\omega,\delta},u^{-}_{\omega,\delta}\in\N$.
\item [(II)] One has
$P_{\omega}|_{Y_{\delta}=\vart^{d_{\delta}}}=0 $ in $ K\brT $, in other words
\[\sum\limits_{\delta\in\Delta}d_{\delta}u^+_{\omega,\delta}=  \sum\limits_{\delta\in\Delta}d_{\delta}u^{-}_{\omega,\delta}=:c_{\omega}.
\]
\end{itemize} 
Let $ \{x_{\delta,j}:\delta\in\Delta,\;j\ge d_{\delta}\} $ be a family of elements in $ \KdX$.
For $\delta\in \Delta$, set
\[
\cY_{\delta}(\vart):=\sum_{j=0}^{d_{\delta}-1}X_{\delta,j}\vart^j+\sum_{j\ge  d_{\delta}}x_{\delta,j}\vart^j\in \KdX\dbT
\]
and
\[
\wt{\cY_{\delta}}(\vart):=\sum_{j=0}^{d_{\delta}-1}X_{\delta,j}\vart^j+\vart^{d_{\delta}}\in \KdX\brT
\]
We assume:
\begin{itemize}
\item[(a)] for every $\delta\in\Delta$, $ x_{\delta,d_{\delta}}$ is a unit;
\item[(b)] for every $ \omega\in\Omega $ and every $ s\ge c_{\omega}$, one has $P_{\omega,s,\cuY(\vart)}=0$.
\end{itemize}

\noindent
We consider the following ideals of $\KdX$ :
$\fka:=\ide{\left\{P_{\omega,s,\cuY(\vart)}\,:\,\omega\in\Omega,\,s\in\N\right\}}$
 and $\fkb:=\left\ide{\left\{P_{\omega,s,\wt{\cuY}(\vart)}\,:\,\omega\in\Omega,\,s\in\N\right\}\right}$.

Then $\KdX/\fka$ and $\KdX/\fkb$ are isomorphic objects of $\Lacp_K$.
\end{thm}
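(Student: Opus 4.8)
The plan is to show that the two ideals $\fka$ and $\fkb$ actually define the same quotient by exhibiting a single continuous $K$-algebra endomorphism of $\KdX$ that carries one family of generators to the other, and then checking it is an automorphism. The key observation is that both $\cuY(\vart)$ and $\wt{\cuY}(\vart)$ have the \emph{same} initial segment $\sum_{j<d_\delta}X_{\delta,j}\vart^j$; they differ only in the tail in degrees $\ge d_\delta$, where $\cY_\delta$ has the (generically chosen) coefficients $x_{\delta,j}$ and $\wt{\cY_\delta}$ has simply $\vart^{d_\delta}$. So morally one wants to ``change variables'' in $\KdX$ so that the tail of $\cuY(\vart)$ becomes the tail of $\wt{\cuY}(\vart)$. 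The natural candidate is a substitution on the $X_{\delta,j}$'s: first I would use the unit hypothesis (a) together with the Weierstrass preparation theorem (\S\ref{subsec:WPT}) applied to each series $\cY_\delta(\vart)\in \KdX\dbT$, regarded as a regular element of order $d_\delta$ in the variable $\vart$ over the complete local ring $\cA=\KdX$. This writes $\cY_\delta(\vart)=p_\delta(\vart)\,u_\delta(\vart)$ with $p_\delta$ a Weierstrass polynomial of degree $d_\delta$ and $u_\delta$ a unit. Because each $P_\omega$ is a \emph{binomial} (hypothesis (I)) and because of the balancing condition (II), the units $u_\delta$ will combine, when we substitute $Y_\delta=\cY_\delta(\vart)$ into $P_\omega$, into a single common unit factor $\prod_\delta u_\delta^{u^+_{\omega,\delta}}=\prod_\delta u_\delta^{u^-_{\omega,\delta}}$ raised to the common exponent pattern; hence $P_\omega|_{Y_\delta=\cY_\delta(\vart)}$ and $P_\omega|_{Y_\delta=p_\delta(\vart)}$ generate the same ideal coefficientwise, i.e. $\fka=\ide{\{P_{\omega,s,\ul{p}(\vart)}\}}$ where $p_\delta$ is now a \emph{polynomial} of degree exactly $d_\delta$ whose lower coefficients are the images of $X_{\delta,j}$ under an automorphism of $\KdX$.

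Next I would make precise that passing from $\cuY(\vart)$ to $\ul{p}(\vart)$ is realized by an honest continuous automorphism $\Theta$ of $\KdX$: the Weierstrass polynomial $p_\delta(\vart)=\vart^{d_\delta}+\sum_{j<d_\delta}a_{\delta,j}\vart^j$ has coefficients $a_{\delta,j}\in\oX{}$-translates of $X_{\delta,j}$, and the map $X_{\delta,j}\mapsto a_{\delta,j}$ is, by construction, of the form $X_{\delta,j}+(\text{higher order in }\oX{})$ (here one must track that the correction terms lie in $\oX{}$, which follows since $x_{\delta,d_\delta}$ is a unit and the higher $x_{\delta,j}$ contribute only to higher-degree coefficients), hence invertible as a self-map of the maximal ideal, so it extends to a $K$-algebra automorphism $\Theta$ of $\KdX$ with $\Theta(\fka)=\ide{\{P_{\omega,s,\ul{p}^{(0)}(\vart)}\}}$ where now $p^{(0)}_\delta(\vart)=\vart^{d_\delta}+\sum_{j<d_\delta}X_{\delta,j}\vart^j=\wt{\cY_\delta}(\vart)$. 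Therefore $\Theta(\fka)=\fkb$ and $\KdX/\fka\cong\KdX/\fkb$ as objects of $\Lacp_K$. An alternative, cleaner route is to invoke Weierstrass \emph{division} instead: the hypothesis (b) says $P_{\omega,s,\cuY(\vart)}=0$ for $s\ge c_\omega$, which is exactly the statement that $P_\omega|_{Y_\delta=\cY_\delta(\vart)}$ is a polynomial in $\vart$ of degree $<c_\omega$; dividing $\wt{\cY_\delta}(\vart)-\cY_\delta(\vart)$ (which is divisible by $\vart^{d_\delta}$) appropriately, and using Lemma \ref{lemm:inclusion:ideals} to compare the two ideals via their functorial vanishing loci, one reduces the equality $\fka=\fkb$ (after the automorphism) to a finite, degree-bounded computation.

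The step I expect to be the main obstacle is the careful bookkeeping showing that the substitution $X_{\delta,j}\mapsto a_{\delta,j}$ really is an automorphism of $\KdX$ respecting $\oX{}$-adic topology — i.e. that the Weierstrass coefficients $a_{\delta,j}$ of $\cY_\delta(\vart)$ differ from $X_{\delta,j}$ by elements of $\oX{}^2$ (or at least by a ``unipotent'' correction), so that the Jacobian of the substitution is invertible mod $\oX{}$. This is where hypothesis (a) (unit $x_{\delta,d_\delta}$) and the precise shape of $\cY_\delta$ (no coefficients below degree $d_\delta$ except the indeterminates $X_{\delta,j}$ themselves, and $\vart^{d_\delta}$-divisible tail) are essential; without them the leading behavior could fail to be a Weierstrass polynomial of the correct order or the substitution could fail to be invertible. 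A secondary technical point is justifying that replacing $P_\omega|_{Y=\cuY(\vart)}$ by $P_\omega|_{Y=\ul p(\vart)}$ does not change the generated ideal coefficientwise: this uses that the common unit factor is a unit in $\KdX\dbT$, hence multiplication by it is a coefficientwise-invertible $\KdX$-linear operation on $\KdX\dbT$, so Remark \ref{rema:definition:homogeneous:coefficients}-type naturality gives $\ide{\{(P_\omega\cdot\text{unit})_{s}\}}=\ide{\{(P_\omega)_{s}\}}$; the binomial structure (I) and the balancing (II) are what make the unit factor genuinely common to both monomials of $P_\omega$.
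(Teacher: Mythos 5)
The opening move of your proposal---Weierstrass preparation $\cY_{\delta}(\vart)=W_{\delta}(\vart)U_{\delta}(\vart)$ and the change of variables sending $X_{\delta,j}$ to the coefficients of the Weierstrass polynomial $W_{\delta}$---is exactly how the paper's proof begins, and it is fine: the relations between the $X_{\delta,j}$'s and the Weierstrass coefficients are triangular with unit leading coefficient $U_{\delta,0}$, which already gives an automorphism of $\KdX$ (one does not need the correction terms to lie in $\ide{\uX}^2$). The genuine gap is in your next step. You claim that (I) and (II) make the unit parts combine into a \emph{common} factor, i.e.\ that $\prod_{\delta}U_{\delta}(\vart)^{u^+_{\omega,\delta}}=\prod_{\delta}U_{\delta}(\vart)^{u^-_{\omega,\delta}}$, so that $P_{\omega}|_{Y_{\delta}=\cY_{\delta}(\vart)}$ is a unit multiple of $P_{\omega}|_{Y_{\delta}=W_{\delta}(\vart)}$ and the generators match coefficientwise. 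This is false: condition (II) only balances the weighted degrees $\sum_{\delta}d_{\delta}u^{\pm}_{\omega,\delta}$, not the exponent vectors themselves. Already for $P=Y_1^2-Y_2^2$ with $d_1=d_2=1$ the two monomials acquire the unrelated unit factors $U_1(\vart)^2$ and $U_2(\vart)^2$, so there is no common unit and no term-by-term identification of generators. A telling symptom is that your main route never uses hypothesis (b), which is in fact indispensable for the equality of the two ideals.

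What actually closes the argument (and is the paper's proof) is a comparison of the \emph{whole} ideals via Lemma \ref{lemm:inclusion:ideals}: for every $(\cB,\fkm_{\cB})$ in $\Lacp_K$ and every $\phi\in\Hom_{\Lacp_K}(\KdX,\cB)$ one shows that all $P_{\omega}|_{Y_{\delta}=\phi(\cY_{\delta}(\vart))}$ vanish if and only if all $P_{\omega}|_{Y_{\delta}=\phi(W_{\delta}(\vart))}$ vanish. One implication is the uniqueness of the Weierstrass factorization in $\cB\dbT$: from $\prod_{\delta}\phi(W_{\delta})^{u^+_{\omega,\delta}}\prod_{\delta}\phi(U_{\delta})^{u^+_{\omega,\delta}}=\prod_{\delta}\phi(W_{\delta})^{u^-_{\omega,\delta}}\prod_{\delta}\phi(U_{\delta})^{u^-_{\omega,\delta}}$ one deduces equality of the Weierstrass-polynomial parts. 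The converse is where (b) enters: if the $W$-products agree, their common value $\wt{W}_{\omega}(\vart)$ is a Weierstrass polynomial of degree $c_{\omega}$, one has $P_{\omega}|_{Y_{\delta}=\phi(\cY_{\delta}(\vart))}=\wt{W}_{\omega}(\vart)\bigl(\prod_{\delta}\phi(U_{\delta})^{u^+_{\omega,\delta}}-\prod_{\delta}\phi(U_{\delta})^{u^-_{\omega,\delta}}\bigr)$, and by (b) the left-hand side is a polynomial of degree $<c_{\omega}$, so uniqueness of Weierstrass division by $\wt{W}_{\omega}$ forces it to be zero. Your ``alternative cleaner route'' gestures at division plus Lemma \ref{lemm:inclusion:ideals}, but as written it divides the wrong object (the differences $\wt{\cY_{\delta}}(\vart)-\cY_{\delta}(\vart)$ rather than dividing by the product polynomial $\wt{W}_{\omega}$) and is not carried out, so the proposal as it stands does not prove the theorem.
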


\begin{proof}
By assumption (a), for every $\delta\in \Delta$,  the series
$\cY_{\delta}(\vart)$ is a $d_{\delta}$-regular element of $\KdX\dbT$.
Thus by the Weierstrass preparation theorem (see \S \ref{subsec:WPT}),
there exists a family
$ \{\fkX_{\delta,j}:\delta\in\Delta,\, 0\le j<d_{\delta}\} $
of elements of the maximal ideal $ \oX{} $ of $ \KdX $
and a family $\{U_{\delta,r}:\delta\in\Delta,\,r\in \N\}$ of elements
of $\KdX$ with $U_{\delta,0}$ an unit, such that, setting
\[
W_{\delta}(\vart):=\vart^{d_{\delta}}+\sum\limits_{j=0}^{d_{\delta}-1}\fkX_{\delta,j}\vart^{j}
\quad \text{and} \quad U_{\delta}(\vart):=\sum\limits_{r\in\N}U_{\delta,r}\vart^{r}
\]
one has
\begin{equation}\label{eqn:Weierstrass decomposition}
\cY_{\delta}(\vart)=W_{\delta}(\vart)U_{\delta}(\vart).
\end{equation}
Identifying the $\vart$-coefficients in the latter equation
yields the following relations in $\KdX$:
\[X_{\delta,j}=\fkX_{\delta,j}U_{\delta,0}+\sum\limits_{r=0}^{j-1}\fkX_{\delta,r}U_{\delta,j-r},\quad
0\le j < d_{\delta}.
\]
Since $U_{\delta,0}$ is a unit, we deduce that the
element of $\Hom_{\Lacp_K}(\KdX,\KdX)$
sending $ X_{\delta,j} $ to $ \fkX_{\delta,j} $ for $ \delta\in\Delta$
and $ 0\le j< d_{\delta} $ is an isomorphism.

Setting
\[
\fkc:=\left\ide{\left\{P_{\omega,s,\{W_{\delta}(\vart)\}}:\omega\in\Omega,\,s\in\N\right\}\right},
\]
the above isomorphism shows that $\KdX\dbT/\fkb$ and $\KdX\dbT/\fkc$ are isomorphic
objects in $\Lacp_K$. To conclude the proof, we show that $\fka=\fkc$,
using lemma \ref{lemm:inclusion:ideals}.

Let $(\cB,\fkm_{\cB})$ be an object in $\Lacp_K$, and let $\phi$ be an
element of $\Hom_{\Lacp_K}(\KdX,\cB)$. We still denote by $\phi$ the
induced morphism $\KdX\dbT\to\cB\dbT$ obtained by applying $\phi$ coefficientwise.

Let us assume that for every $\omega\in\Omega $
one has $P_{\omega}|_{Y_{\delta}=\phi(\cY_{\delta}(\vart))}=0$. One has to show that 
for every $ \omega\in\Omega $ one has $P_{\omega}|_{Y_{\delta}=\phi(W_{\delta}(\vart))}=0$. 

From our assumption and hypothesis (I) we deduce the following equality in $ \cB\dbT $:
\[
\prod\limits_{\delta\in\Delta}
\phi(\cY_{\delta}(\vart))^{u^+_{\omega,\delta}}
=\prod\limits_{\delta\in\Delta}  \phi(\cY_{\delta}(\vart))^{u^{-}_{\omega,\delta}}
\]
which can be rewritten, using equation \ref{eqn:Weierstrass decomposition}, as 
\begin{equation}\label{equa:Wdelta:Udelta}
\prod\limits_{\delta\in\Delta}
\phi(W_{\delta}(\vart))^{u^+_{\omega,\delta}}
\prod\limits_{\delta\in\Delta}
\phi(U_{\delta}(\vart))^{u^+_{\omega,\delta}}
=\prod\limits_{\delta\in\Delta}\phi(W_{\delta}(\vart))^{u^{-}_{\omega,\delta}}
\prod\limits_{\delta\in\Delta}
\phi(U_{\delta}(\vart))^{u^{-}_{\omega,\delta}}.
\end{equation}
Note that for every $\delta\in \Delta$, $\phi(W_{\delta}(\vart))$ is
a Weierstrass polynomial of degree $d_{\delta}$ in $\cB\dbT$
and $\phi(U_{\delta}(\vart))$ is a unit in $\cB\dbT$, since $\phi(U_{\delta,0})$ is.

By uniqueness of the Weierstrass factorization in $\cB\dbT$, one gets the equality
\begin{equation}\label{equa:Wdelta}
\prod\limits_{\delta\in\Delta}\phi(W_{\delta}(\vart))^{u^+_{\omega,\delta}}
=\prod\limits_{\delta\in\Delta}\phi(W_{\delta}(\vart))^{u^{-}_{\omega,\delta}}
\end{equation}
which means exactly that $P_{\omega}|_{Y_{\delta}=\phi(W_{\delta}(\vart))}=0$.

Conversely, assume that for every $\omega\in\Omega$ one has
$P_{\omega}|_{Y_{\delta}=\phi(W_{\delta}(\vart))}=0$, in other words, 
that \eqref{equa:Wdelta} holds, and let us show that 
for every $\omega\in\Omega$ one has $P_{\omega}|_{Y_{\delta}=\phi(\cY_{\delta}(\vart))}=0$. Let
$\wt{W}_{\omega}(\vart)\in \cB\dbT$ be the common value of both members of \eqref{equa:Wdelta}.
Note that $\wt{W}_{\omega}(\vart)$ is a Weierstrass polynomial of degree $c_{\omega}$.
On the other hand, one has
\[
P_{\omega}|_{Y_{\delta}=\phi(\cY_{\delta}(\vart))}
=
\wt{W}_{\omega}(\vart)
\left(\prod\limits_{\delta\in\Delta}
\phi(U_{\delta}(\vart))^{u^+_{\omega,\delta}}
-
\prod\limits_{\delta\in\Delta}
\phi(U_{\delta}(\vart))^{u^{-}_{\omega,\delta}}\right).
\]
By assumption (b), $P_{\omega}|_{Y_{\delta}=\phi(\cY_{\delta}(\vart))}$ is an
element of the polynomial ring $\cB\brT$ with degree less than $c_{\omega}$.
By the uniqueness of the Weierstrass division by $\wt{W}_{\omega}(\vart)$ in $\cB\dbT$
one concludes that $P_{\omega}|_{Y_{\delta}=\phi(\cY_{\delta}(\vart))}=0$.
\end{proof}

\section{A comparison theorem between formal neighborhoods}
\label{sec:toric:varieties}
In this section we will make use of the results in sections
\ref{sec:technical:formal:neighbourhood} and
\ref{sec:technical:comparison} in order to obtain the main comparison
theorem as an application of the results in those sections to the
toric setting. It should be noted that our results provide basically
two approaches for computing effectively the formal neighborhood of the
generic point of the Nash set associated with a toric valuation. The
first one is based on an effective implementation of the Hensel's lemma
crucially used in section \ref{sec:technical:formal:neighbourhood}. The second one takes
advantage of the comparison theorem in order to use exactly the same techniques
as in the case of rational arcs described in \cite{Bou-Seb:toric}. The
latter seems to be much more efficient in practice. See \S \ref{subsec:exam}
below for an explicit example of computation, as well as
\cite{MMC:PhD} for more details and explicit examples.

\subsection{}\label{ssec:differential structure}
We retain the notation introduced in section \ref{sec:recoll:arc:toric}.
In particular, $V_{\sigma}$ is the affine toric $k$-variety of
dimension $d$ associated with a cone $\sigma$ and presented as $k[\uZ]/\fki_{\sigma}$,
where $\uZ=\{Z_i\,:\,i\in\{1,\dots,h\}\}$ and $\fki_{\sigma}$
is generated by the binomial elements $\{F_{\bell}=\uZ^{\ell^+}-\uZ^{\ell^-}\,;\,\bell\in L\}$, 
$L$ being a subgroup of $\Z^h$.
Moreover, denoting by $\uZ_{\infty}$ the set of variables $\{Z_{i,s}\,:\,i\in\{1,\dots,h\}, s\in\N\}$,
the arc scheme $\arc(V_{\sigma})$ associated with the affine toric variety
$V_{\sigma} $ may be identified with the affine scheme
$\Spec\left(k[\uZ_{\infty}]/[\fki_{\sigma}]\right)$; the ideal
$ [\fki_{\sigma}] $ is generated by the elements $\{F_{\bell,s}\,;\,\bell\in L, s\in\N \} $ 
where for every $\bell\in L$ the elements $F_{\bell,s}\in k[\uZ_{\infty}]$
may be characterized by the following equality in $k[\uZ_{\infty}]\dbT$:
\[
F_{\bell}|_{Z_i=\sum\limits_{s\in\N}Z_{i,s}\vart^{s}}=\sum\limits_{s\in\N}F_{\bell,s}\vart^s.
\]

\subsection{}\label{ssec:arcs et point stable}
The following proposition gives an explicit description of the generic
point of the Nash set associated with a toric valuation.
\begin{proposition}\label{prop:ideal:stable:point}
Let $ \bn \in \sigma\cap N$ be a toric valuation, $\mds_{\bn}$ 
be the associated Nash set 
and $\eta_{\bn }$ be the generic point of $\mds_{\bn}$.
Let $ \fka_{\bn } $ be the ideal
$\ide{\{Z_{i,s_{i}}\,:\,1\le i\le h, 0\le s_{i}<\acc{\bn}{\bm_i}\}}$ of 
$k[\uZ_{\infty}] $. Let $ G_{\bn }:=\prod\limits_{i=1}^{d}Z_{i,\ide{\bn ,\bm_i}} $ and $g_{\bn}$ be the image of $G_{\bn}$
in $\struc{\arc(V_{\sigma})}$.
Then:
\begin{itemize}
\item [(i)]  The prime ideal of $ \struc{\arc(V_{\sigma})} $ corresponding
with $\eta_{\bn } $ is the radical of the image of $\fka_{\bn }$ in $\struc{\arc(V_{\sigma})}$.
\item [(ii)] The point $ \eta_{\bn } $ belongs to the distinguished
  open subset $\diso{g_{\bn }}$ of $ \arc(V_{\sigma})$. The prime ideal of
  $\struc{\arc(V_{\sigma})}_{g_{\bn}}$ corresponding with $\eta_{\bn } $ is the extension
  of $\fka_{\bn }$ to $\struc{\arc(V_{\sigma})}_{g_{\bn}}$.
\end{itemize}
\end{proposition}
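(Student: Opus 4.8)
We sketch a plan. The crux is an explicit ``index shift'' identifying the closed subscheme of $\arc(V_\sigma)$ cut out by $\fka_{\bn}$ with $\arc(V_\sigma)$ itself, together with the irreducibility of arc schemes of irreducible varieties in characteristic zero. Throughout we write $c_i:=\acc{\bn}{\bm_i}$ for $1\le i\le h$, and (following the abuse of notation of \S\ref{ssec:extension ideals}) we still denote by $\fka_{\bn}$ its image in $\struc{\arc(V_\sigma)}$ and by $V(\fka_{\bn})$ the corresponding closed subset of $\arc(V_\sigma)$. The first step is to establish that the $k$-algebra isomorphism $\Phi\colon k[\uZ_\infty]/\fka_{\bn}\xrightarrow{\ \sim\ }k[\uZ_\infty]$ determined by $Z_{i,s}\mapsto Z_{i,s-c_i}$ (for $s\ge c_i$; this is an isomorphism of polynomial rings, as it is bijective on the natural generating sets) carries the image of $[\fki_\sigma]$ again onto $[\fki_\sigma]$. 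The one computation to be carried out carefully is this: for $\bell\in L$, substituting $Z_i=\vart^{c_i}\sum_{t\ge 0}Z_{i,t}\vart^{t}$ in the binomial $F_{\bell}=\uZ^{\bell^{+}}-\uZ^{\bell^{-}}$ factors out a common power $\vart^{e_{\bell}}$, where the two exponents produced agree \emph{exactly because} $\bell\in L$ (one uses $\sum_{\ell_i>0}\ell_i\bm_i=-\sum_{\ell_i<0}\ell_i\bm_i$, whence both equal $e_{\bell}=\acc{\bn}{\bell}$). It follows that $\Phi$ sends $F_{\bell,m}\bmod\fka_{\bn}$ to $F_{\bell,m-e_{\bell}}$ when $m\ge e_{\bell}$ and to $0$ otherwise, so that $\Phi$ descends to an isomorphism
\[
\struc{\arc(V_\sigma)}/\fka_{\bn}=k[\uZ_\infty]/([\fki_\sigma]+\fka_{\bn})\ \xrightarrow{\ \sim\ }\ k[\uZ_\infty]/[\fki_\sigma]=\struc{\arc(V_\sigma)},
\]
i.e.\ an isomorphism of $k$-schemes $\arc(V_\sigma)\xrightarrow{\ \sim\ }V(\fka_{\bn})$.

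For (ii) I would localize this isomorphism at $G_{\bn}=\prod_{i=1}^d Z_{i,c_i}$: since $\Phi$ sends $G_{\bn}$ to $\prod_{i=1}^d Z_{i,0}$, which is the image under the base-point map of $\prod_{i=1}^d z_i$, and $\diso{\prod_{i=1}^d z_i}=\torus$ inside $V_\sigma$ (\S\ref{ssec:toric:basic}), we obtain $\struc{\arc(V_\sigma)}_{g_{\bn}}/\fka_{\bn}\cong\struc{\arc(\torus)}$. As $\torus\cong\mathbb{G}_{m,k}^{d}$, the ring $\struc{\arc(\torus)}$ is a localization of a polynomial $k$-algebra, hence a domain, so $\fka_{\bn}$ is a prime ideal of $\struc{\arc(V_\sigma)}_{g_{\bn}}$. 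To see it corresponds to $\eta_{\bn}$: by Lemma \ref{lemm:decr:mds:toric:set}(iv), $\arc^\circ(V_\sigma)_{\bn}$ is a nonempty open subset of $\mds_{\bn}$, hence contains its generic point $\eta_{\bn}$; and it is contained in $\bigcap_{i=1}^d\diso{z_{i,c_i}}=\diso{g_{\bn}}$, which already gives the first assertion of (ii). Comparing Lemma \ref{lemm:decr:mds:toric:set}(ii) with the description of $\arc^\circ(V_\sigma)$ in \S\ref{ssec:toric:basic}, one checks that $V(\fka_{\bn})\cap\diso{g_{\bn}}$ coincides, as a set, with $\arc^\circ(V_\sigma)_{\bn}$; being irreducible (as $\fka_{\bn}$ is prime there) with closure $\mds_{\bn}$ in $\arc(V_\sigma)$ by Lemma \ref{lemm:decr:mds:toric:set}(iii), its generic point is the generic point of $\mds_{\bn}$, namely $\eta_{\bn}$. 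This proves (ii).

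For (i), by Kolchin's irreducibility theorem the arc scheme $\arc(V_\sigma)$ of the irreducible $k$-variety $V_\sigma$ (with $\mathrm{char}\,k=0$) is irreducible, so through the isomorphism of the first paragraph the quotient $\struc{\arc(V_\sigma)}/\fka_{\bn}$ has prime nilradical; equivalently $\sqrt{\fka_{\bn}}$ is a prime ideal $\mathfrak{q}$ of $\struc{\arc(V_\sigma)}$. Let $\mathfrak{p}$ be the prime corresponding to $\eta_{\bn}$. From Lemma \ref{lemm:decr:mds:toric:set}(ii) one gets $\mds_{\bn}\subset V(\fka_{\bn})$, hence $\fka_{\bn}\subset\mathfrak{p}$ and $\mathfrak{q}\subset\mathfrak{p}$. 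Conversely $V(\fka_{\bn})\cap\diso{g_{\bn}}$ is a nonempty open — hence dense — subset of the irreducible $V(\fka_{\bn})$, and by part (ii) its generic point is $\eta_{\bn}$; therefore $\eta_{\bn}$ is the generic point of $V(\fka_{\bn})$, so $\mathfrak{p}=\mathfrak{q}=\sqrt{\fka_{\bn}}$, which is exactly assertion (i).

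The step I expect to be the main obstacle is the index-shift computation of the first paragraph: one must verify that the substitution lands \emph{precisely} on $[\fki_\sigma]$ (neither a larger nor a smaller ideal), which rests entirely on the exponent identity valid only for $\bell\in L$; once this is secured, invoking Kolchin's theorem does the rest for (i), and the set-theoretic identification of $V(\fka_{\bn})\cap\diso{g_{\bn}}$ with $\arc^\circ(V_\sigma)_{\bn}$ is routine from Lemma \ref{lemm:decr:mds:toric:set}. One could alternatively deduce the primality statement in (ii) from Lemma \ref{lemm:quotient}(ii), eliminating the variables $Z_{q,s}$ ($d<q\le h$) by means of the binomials $\Fdis{q}$; but the index-shift route has the advantage of delivering simultaneously the global irreducibility required for (i).
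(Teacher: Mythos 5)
Your proof is correct, but it takes a genuinely different route from the paper, and even reverses the logical order of (i) and (ii). The paper disposes of (i) in one line by invoking lemma \ref{lemm:decr:mds:toric:set} (ultimately Ishii's explicit description of $\mds_{\bn}$ as the contact locus cut out by $\fka_{\bn}$), and then deduces (ii) from (i) by a functor-of-points computation: $\Hom_{\Alg_k}(R,A)$, for $R=k[\uZ_{\infty}]_{G_{\bn}}/[\fki_{\sigma}]+\fka_{\bn}$, is identified with semigroup morphisms $S_{\sigma}\to A\dbT$ of order $\geq\acc{\cdot}{\bn}$ and unit leading part, hence with $A$-points of $\struc{\arc(\torus)}$, which is a domain. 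You instead make the key mechanism explicit in coordinates: your index-shift $\Phi$ is exactly the comorphism of ``multiplication by the arc $t^{\bn}$'', and your computation that it carries $[\fki_{\sigma}]+\fka_{\bn}$ precisely onto $[\fki_{\sigma}]$ (resting on $\acc{\bn}{\bell^{+}}=\acc{\bn}{\bell^{-}}$ for $\bell\in L$) is correct and gives the stronger statement $V(\fka_{\bn})\cong\arc(V_{\sigma})$ as schemes. Localizing at $G_{\bn}$ then yields the same domain $\struc{\arc(\torus)}$ as in the paper, but by an explicit isomorphism rather than via Yoneda, and your identification $V(\fka_{\bn})\cap\diso{g_{\bn}}=\arc^{\circ}(V_{\sigma})_{\bn}$ together with lemma \ref{lemm:decr:mds:toric:set}(iii)--(iv) correctly pins down the corresponding point as $\eta_{\bn}$, giving (ii) without using (i). For (i) you then need irreducibility of $V(\fka_{\bn})$, which you obtain from Kolchin's irreducibility theorem (legitimate under the standing characteristic-zero hypothesis) transported through $\Phi$, plus the density of the open piece; in effect you reprove the set-theoretic equality $\mds_{\bn}=V(\fka_{\bn})$ rather than importing it from Ishii. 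The trade-off: your argument is more self-contained (it only quotes Ishii through lemma \ref{lemm:decr:mds:toric:set}(iii) and adds Kolchin, a classical result the paper never needs) and yields the bonus isomorphism $V(\fka_{\bn})\cong\arc(V_{\sigma})$, while the paper's argument is shorter and coordinate-free, leaning on the toric description already cited. The only points to tighten are bookkeeping: state explicitly that $\arc(U)=\arc(V_\sigma)\times_{V_\sigma}U$ for the open subset $U=\torus$ (so that the localization at $\prod_{i\le d}z_{i,0}$ really is $\struc{\arc(\torus)}$), and note that nonemptiness of $V(\fka_{\bn})\cap\diso{g_{\bn}}$ comes from lemma \ref{lemm:decr:mds:toric:set}(iv) (or the explicit arc $\bm\mapsto\vart^{\acc{\bm}{\bn}}$).
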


\begin{proof}
Assertion (i) follows from lemma \ref{lemm:decr:mds:toric:set}.

Let us now prove assertion (ii). By (i), it is enough to show that
the $k$-algebra $R:=k[\uZ_{\infty}]_{G_{\bn}}/[\fki_{\sigma}]+\fka_{\bn}$
is a domain. Let us show that its functor of points is isomorphic to
the functor of points of the $k$-algebra $\struc{\arc(\torus)}$, the latter
being a domain since $\torus$ is a smooth irreducible variety.

Let $ A  $ be a $ k $-algebra. By the very definition of $R$ and
\S\ref{ssec:jetarc} and \ref{ssec:toric:basic},  the
set $\Hom_{\Alg_k}(R,A )$ is in natural bijection with the set of
semigroup morphisms $\phi\colon S_{\sigma}\to A\dbT$ such that for
$1\le i\le h$ one has $\ord_{t}(\phi(\bm_i))\ge \acc{\bm_i}{\bn}$ and for $1\le i\le d$
one has $\vart^{-\acc{\bm_i}{\bn}}\phi(\bm_i)\in (A\dbT)^{\inv}$. The latter
property also holds for $d+1\le i \le h$ since $\bm_i$ is a $\Z$-basis of $M$. In
particular $\bm\mapsto \vart^{-\acc{\bm}{\bn}}\phi(\bm)$ is a semigroup morphism
$S_{\sigma}\mapsto (A\dbT)^{\inv}$. Thus the
set $\Hom_{\Alg_k}(R,A )$ is in natural bijection with the set of
group morphism $M\to (A\dbT)^{\inv}$, which in turn is in natural
bijection with $\Hom_{\Alg_k}(\struc{\arc(\torus)},A)$.
\end{proof}

\subsection{}\label{ssec:complete:intersection}
Recall that $\fkj$ is the ideal $\ide{ \Fdis{q}\,;\,d+1\le q\le h }$
  of the ring $ k[\uZ] $. Its defines an affine $k$-scheme
 $W:=\Spec\left(k[\uZ]/\fkj \right) $ which contains $ V_{\sigma} $ as a closed subscheme.
Recall also that $\arc(W)$ may be identified with $\Spec\left(k[\uZ_{\infty}]/ [\fkj]\right)$
and that $[\fkj ]=\ide{\Fdisu{q}{s}\,;\,d+1\le q\le h, s\in\N }$.
The closed immersion $V_{\sigma}\rightarrow W $ induces a closed immersion
$\arc(V_{\sigma})\rightarrow \arc(W)$ between the corresponding arc
schemes. For $\bn\in \sigma\cap N$, let $\eta'_{\bn }$ be the image of $ \eta_{\bn } $
by this closed immersion. We shall reduce the computation of the
formal neighborhood of $\arc(V_{\sigma})$ at $\eta_{\bn}$ to that of
the formal neighborhood of $\arc(W)$ at $\eta'_{\bn}$. We will say
that we are in the toric setting in the former situation and (abusing
terminology) in the complete intersection setting in the latter.

The following lemma is a straightforward consequence of the
definition, lemma \ref{lemm:quotient}(ii) and proposition \ref{prop:ideal:stable:point}.
\begin{lemma}\label{lem:complete:intersection}
Retain the notation and hypotheses of proposition \ref{prop:ideal:stable:point}.
Let $g'_{\bn}$ be the image of $G_{\bn}$ in $\struc{\arc(W)}$.

Then the point $ \eta'_{\bn } $ belongs to the distinguished
 open subset $\diso{g'_{\bn }}$ of $ \arc(W)$, and the prime ideal of
 $\struc{\arc(W)}_{g'_{\bn}}$ corresponding with $\eta'_{\bn } $ is the extension
 of $\fka_{\bn }$ to $\struc{\arc(W)}_{g'_{\bn}}$.
 \end{lemma}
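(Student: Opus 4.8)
The plan is to transport the statement of Proposition~\ref{prop:ideal:stable:point}(ii) along the closed immersion $\arc(V_{\sigma})\hookrightarrow\arc(W)$, and then to feed in Lemma~\ref{lemm:quotient}(ii) at the very end. Since $\Fdis{q}\in\fki_{\sigma}$ for $d+1\le q\le h$, one has $\fkj\subseteq\fki_{\sigma}$, hence $[\fkj]\subseteq[\fki_{\sigma}]$ in $k[\uZ_{\infty}]$; so the closed immersion $\arc(V_{\sigma})\hookrightarrow\arc(W)$ recalled in \S\ref{ssec:complete:intersection} is, on coordinate rings, the surjection $\struc{\arc(W)}=k[\uZ_{\infty}]/[\fkj]\twoheadrightarrow k[\uZ_{\infty}]/[\fki_{\sigma}]=\struc{\arc(V_{\sigma})}$, and $\eta'_{\bn}$ is by definition the prime of $\struc{\arc(W)}$ lying under the prime of $\struc{\arc(V_{\sigma})}$ attached to $\eta_{\bn}$.

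First I would check $\eta'_{\bn}\in\diso{g'_{\bn}}$. By construction $g'_{\bn}$ maps to $g_{\bn}$ under the surjection above; since $\eta_{\bn}\in\diso{g_{\bn}}$ by Proposition~\ref{prop:ideal:stable:point}(ii) and $\eta_{\bn}$ lies over $\eta'_{\bn}$, it follows that $g'_{\bn}$ does not belong to the prime $\eta'_{\bn}$, i.e. $\eta'_{\bn}\in\diso{g'_{\bn}}$. Hence it suffices to identify the prime of the localization $\struc{\arc(W)}_{g'_{\bn}}=k[\uZ_{\infty}]_{G_{\bn}}/[\fkj]$ corresponding to $\eta'_{\bn}$. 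Localizing the surjection at $G_{\bn}$ gives $k[\uZ_{\infty}]_{G_{\bn}}/[\fkj]\twoheadrightarrow k[\uZ_{\infty}]_{G_{\bn}}/[\fki_{\sigma}]=\struc{\arc(V_{\sigma})}_{g_{\bn}}$, under which $\eta'_{\bn}$ is the preimage of $\eta_{\bn}$. By Proposition~\ref{prop:ideal:stable:point}(ii) the prime of $\struc{\arc(V_{\sigma})}_{g_{\bn}}$ attached to $\eta_{\bn}$ is the extension of $\fka_{\bn}$, i.e. it pulls back in $k[\uZ_{\infty}]_{G_{\bn}}$ to the prime ideal $[\fki_{\sigma}]+\fka_{\bn}$. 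Therefore the prime of $\struc{\arc(W)}_{g'_{\bn}}$ corresponding to $\eta'_{\bn}$ is the image of $[\fki_{\sigma}]+\fka_{\bn}$ in $k[\uZ_{\infty}]_{G_{\bn}}/[\fkj]$; and by Lemma~\ref{lemm:quotient}(ii), which asserts $[\fki_{\sigma}]+\fka_{\bn}=[\fkj]+\fka_{\bn}$ in $k[\uZ_{\infty}]_{G_{\bn}}$, this image is exactly the extension of $\fka_{\bn}$ to $\struc{\arc(W)}_{g'_{\bn}}$. That is the claim.

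I do not expect a genuine obstacle here: as the statement already advertises, the content is a repackaging of Proposition~\ref{prop:ideal:stable:point} and Lemma~\ref{lemm:quotient}(ii). The only points deserving a moment's care are the identification of $\arc(V_{\sigma})\hookrightarrow\arc(W)$ with the quotient map on coordinate rings (which rests on $\fkj\subseteq\fki_{\sigma}$ and the functoriality of the arc construction from \S\ref{ssec:jetarc}), and the routine fact that forming preimages of primes, passing to localizations, and extending the ideal $\fka_{\bn}$ all commute in the way used above.
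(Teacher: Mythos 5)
Your argument is correct and is exactly the unpacking the paper intends: the paper gives no details, declaring the lemma a straightforward consequence of the definition of $\eta'_{\bn}$, Lemma~\ref{lemm:quotient}(ii) and Proposition~\ref{prop:ideal:stable:point}, which are precisely the three ingredients you combine (pulling back the prime along the surjection $\struc{\arc(W)}\twoheadrightarrow\struc{\arc(V_{\sigma})}$, localizing at $G_{\bn}$, and replacing $[\fki_{\sigma}]+\fka_{\bn}$ by $[\fkj]+\fka_{\bn}$).
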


\begin{notation}
For $ q\in\{1,\dots,h\} $ and $\br\in\N^q $ we denote
by $\uZ_{\leq q,\leq r_i}$ the set of
variables $\{Z_{i,s_i}\,;\,1\le i\le q,\, 0 \leq s_i\leq  r_i\}$.
If $ q=h $ we write $\uZ_{\bullet, \leq r_i} $ instead of $\uZ_{\leq h, \leq r_i}$.
We define similarly $\uZ_{\leq q,\geq r_i}$, $\uZ_{\geq q,\leq r_i}$ and so on. 
\end{notation}

\subsection{}\label{ssec:toric formal neighbourhood}
The following lemma shows that we can apply theorem \ref{theo:main:technical}
in the complete intersection setting.

\begin{lemma}\label{lem:toric:hyp:formal:neighbourhood}
Let $\bn\in \sigma\cap N $ be a toric valuation.

Let $G_{\bn }:=\prod\limits_{i=1}^{d}Z_{i,\acc{\bn}{\bm_i}}$ and $ A $ be the $k$-algebra $k[\uZ_{\leq d,\ge\acc{\bn}{\bm_i}} ]_{G_{\bn }} $.

Let $\Omega$ be the finite set $\{(i,s_i);i\in\{1,\dots,h\}, 0\le s_i <\ide{\bn ,\bm_i}\}$.
For $\omega\in \Omega$, set $X_{\omega}:=Z_{\omega}$.
Set $\Gamma=\{d+1,\dots,h\}$.
For $q\in \Gamma$ and $s\in \N$, set $Y_{q,s}:=Z_{q,\acc{\bn}{\bm_q}+s}$.

Let $\fkh$ be the extension of the ideal  $[\fkj ]$ in $k[\uZ_{\infty}]_{G_{\bn }}$.
For $s\in \N$ and $q\in \Gamma$, set $H_{q,s}:=\Fdisu{q}{\acc{\bn}{\bell_q}+s}$
(Recall from \S\ref{subsec:toric:ideal} the definition of $\acc{\bn}{\bell_q}$.)
Then with this notation the hypotheses in theorem \ref{theo:main:technical} hold true.
\end{lemma}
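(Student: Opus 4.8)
The plan is to verify the hypotheses (A), (A1), (A2), (B) and (C) of Theorem \ref{theo:main:technical} one at a time, using the explicit shape of the binomials $\Fdis{q}$ recorded in \eqref{eq:F_q}. Throughout we work in the ring $\AXY=A[\uX,\uY]$ with $A=k[\uZ_{\leq d,\ge\acc{\bn}{\bm_i}}]_{G_{\bn}}$, $X_{\omega}=Z_{\omega}$ for $\omega\in\Omega$, and $Y_{q,s}=Z_{q,\acc{\bn}{\bm_q}+s}$. The first thing to observe is that the natural map from $A[\uX,\uY]$ to $k[\uZ_{\infty}]_{G_{\bn}}$ is an isomorphism: every variable $Z_{i,s}$ with $i\le d$ appears either among the $X_{\omega}$ (if $s<\acc{\bn}{\bm_i}$) or in the coefficient ring $A$ (if $s\ge\acc{\bn}{\bm_i}$), while every variable $Z_{q,s}$ with $q>d$ appears either among the $X_{\omega}$ (if $s<\acc{\bn}{\bm_q}$) or among the $Y_{q,s-\acc{\bn}{\bm_q}}$ (if $s\ge\acc{\bn}{\bm_q}$), and $\fkh$ is by definition the image of $[\fkj]$ in this ring. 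So the statement is really about the structure of the generators $\Fdisu{q}{s}$ of $[\fkj]$.

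Next I would extract the generators $H_{q,s}$. Write $\Fdis{q}=Z_q\,\uZ^{\bell_q^{+,\le d}}-\uZ^{\bell_q^{-,\le d}}$, where the two monomials involve only $Z_1,\dots,Z_d$ besides $Z_q$; applying $\HS$ and extracting the $\vart$-coefficient of order $\acc{\bn}{\bell_q}+s$ gives $\Fdisu{q}{\acc{\bn}{\bell_q}+s}$. The key point is that, by the definition of $\acc{\bn}{\bm_q}$ and $\acc{\bn}{\bell_q}$ and the fact that each $Z_{i,s'}=0$ for $s'<\acc{\bn}{\bm_i}$ has been inverted/adjusted appropriately (more precisely, using Lemma \ref{lemm:quotient}(ii) the relevant lowest-order terms of the monomials are controlled), the coefficient of $\vart^{\acc{\bn}{\bell_q}+s}$ in $\HS(Z_q\uZ^{\bell_q^{+,\le d}})$ is $Z_{q,\acc{\bn}{\bm_q}+s}\cdot(\text{product of the }Z_{i,\acc{\bn}{\bm_i}}\text{ with }i\text{ in the support of }\bell_q^{+})+(\text{terms of higher }Z_{q,\bullet}\text{-index or lying in }\oX{})$. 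Dividing by $G_{\bn}$ one sees that the leading coefficient of $Y_{q,s}=Z_{q,\acc{\bn}{\bm_q}+s}$ is a unit of $A$, which yields $U_{q,s}$ and establishes (A1). Then $E_{q,s}$ is what remains; one checks that it is $A$-linear (actually affine-linear) in the $Y_{q,r}$ with coefficients in $A$, that the constant term $E_{q,s,-1}$ is the coefficient of $\vart^{\acc{\bn}{\bell_q}+s}$ in $\HS(\uZ^{\bell_q^{-,\le d}})$, and that the coefficient $E_{q,s,r}$ of $Y_{q,r}$ lies in the ideal $\oX{}$ whenever $r\ge s$ because such a contribution forces at least one further factor $Z_{i,s'}$ with $s'<\acc{\bn}{\bm_i}$, i.e.\ one of the $X_\omega$; this gives (A2). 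Finiteness of the family $(E_{\gamma,s,r})$ in $r$ is automatic since $\Fdis{q}$ is a polynomial.

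For (B) I would invoke Remark \ref{rema:fkh:generated}: since $\fkh=[\fkj]$ is generated by the $\Fdisu{q}{\acc{\bn}{\bell_q}+s}$ together with the lower-order coefficients $\Fdisu{q}{s'}$ for $s'<\acc{\bn}{\bell_q}$, and those lower-order coefficients all lie in $\oX{}$ (again because every monomial contributing to them must use some $Z_{i,s'}$ with $s'<\acc{\bn}{\bm_i}$, hence an $X_\omega$), the ideal $\fkh$ is generated by the $H_{q,s}$'s and elements of $\ide{\uX}$, so (B) holds automatically. Finally, for (C) I would note that $E_{q,0,-1}$ is, up to a unit of $A$, the monomial $\uZ^{\bell_q^{-,\le d}}$ evaluated on the leading coefficients $Z_{i,\acc{\bn}{\bm_i}}$, which is (a monomial in the inverted variables constituting) a unit of $A$, hence does not lie in $\oX{}$; so (C) holds for every $q\in\Gamma$.

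The main obstacle will be the careful bookkeeping in the previous paragraph: one has to keep precise track, for each monomial appearing in $\Fdis{q}$ and each order $\acc{\bn}{\bell_q}+s$, of which $Z_{i,s'}$-indices can occur in the coefficient, separating those that give units of $A$ (the leading indices $s'=\acc{\bn}{\bm_i}$, $i\le d$), those that produce the linear term $Y_{q,r}=Z_{q,\acc{\bn}{\bm_q}+r}$, and those that land in $\oX{}$ (any index $s'<\acc{\bn}{\bm_i}$). This is where Lemma \ref{lemm:quotient}(ii), identifying $[\fki_\sigma]+\fka_{\bn}$ with $[\fkj]+\fka_{\bn}$ in the localization, and the specific normalization \eqref{eq:F_q} of $\Fdis{q}$ — no variable $Z_{d+1},\dots,Z_h$ other than $Z_q$ occurs — are essential: they guarantee that in $H_{q,s}$ the only $Y$-variables that appear are the $Y_{q,r}=Z_{q,\acc{\bn}{\bm_q}+r}$ with the same index $q$, so that the system is block-triangular in $q$ as required by condition (A). Once the combinatorics of these $\vart$-expansions is organized, each of (A1), (A2), (B), (C) follows by inspection.
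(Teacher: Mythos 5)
Your proposal is correct and follows essentially the same route as the paper's proof: the identification $A[\uX,\uY]\cong k[\uZ_\infty]_{G_{\bn}}$, the explicit $\vart$-expansion of $\Fdis{q}$ giving a unit leading coefficient for $Y_{q,s}$ (A1), the weighted-order counting showing that any other monomial in degree $\acc{\bn}{\bell_q}+s$ with $r\geq s$ (and any monomial of $\Fdisu{q}{s'}$ with $s'<\acc{\bn}{\bell_q}$) must involve some $Z_{i,s'}$ with $s'<\acc{\bn}{\bm_i}$ (A2) and, via remark \ref{rema:fkh:generated}, (B), and the reduction of $E_{q,0,-1}$ modulo $\ide{\uX}$ to the monomial $\prod_{i\in\Lambda_q^-}Z_{i,\acc{\bn}{\bm_i}}^{-\ell_{q,i}}$ for (C). The only slip is your appeal to lemma \ref{lemm:quotient}(ii), which plays no role in this verification (it is only needed afterwards, in corollary \ref{cor:toric:formal:neighbourhood}, to pass from the complete intersection $W$ back to $V_\sigma$); the bookkeeping you defer is exactly the inequality argument the paper writes out.
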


\begin{proof}
Note that with the notation of the statement, one has in particular $A[\uX,\uY]=k[\uZ_{\infty}]_{G_{\bn }}$
and the ideal $\oXY{} $ corresponds to $\fka_{\bn }=\ide{\uZ_{\bullet, <\acc{\bn}{\bm_i}}}$.

Let us show that assumption (A) in theorem \ref{theo:main:technical} holds.
Pick up $q\in\{d+1,\dots,h\}$. Set 
\[
\Lambda_{q}^{+}:=\{i\in\{1,\dots,d\};\ell_{q,i}>0\},\quad \Lambda_{q}^{-}:=\{i\in\{1,\dots,d\};\ell_{q,i}<0\}
\]
\[\Theta_{q,s}^{+}:=\{
(r_q,(r_{i,k}\,;\,i\in\Lambda_{q}^{+},1\le k\le\ell_{q,i}))\,;\,r_q,r_{i,k}\in\N,\,
r_q+\sum\limits_{i\in\Lambda_{q}^{+}}\sum\limits_{k=1}^{\ell_{q,i}}r_{i,k}=s\}
\]
\[
\text{and}\quad
\Theta_{q,s}^{-}:=\{
(r_{i,k}\,;\,i\in\Lambda_{q}^{-},1\le k\le-\ell_{q,i})\,;\,r_{i,k}\in\N,
\,\sum\limits_{i\in\Lambda_{q}^{-}}\sum\limits_{k=1}^{-\ell_{q,i}}r_{i,k}=s\}.
\]
Then by \eqref{eq:F_q} and \eqref{equa:HS}, the polynomial $\Fdisu{q}{s}$ has the following form:
\begin{equation}\label{eq:form:equations}
\Fdisu{q}{s}=
\sum\limits_{(r_q,r_{i,k})\in\Theta_{q,s}^{+}}Z_{q,r_q}\prod\limits_{i\in\Lambda_{q}^{+}}\prod\limits_{k=1}^{\ell_{q,i}}Z_{i,r_{i,k}}
-\sum\limits_{(r_{i,k})\in\Theta_{q,s}^{-}}\prod\limits_{i\in\Lambda_{q}^{-}}\prod\limits_{k=1}^{-\ell_{q,i}}Z_{i,r_{i,k}}.
\end{equation}
Note that setting $r_q=\acc{\bn}{\bm_q}+s$ and $r_{i,k}=\acc{\bn}{\bm_i}$
for $1\leq k\leq \ell_{q,i}$  defines an element of $\Theta_{q,\acc{\bn}{\bell_q}+s}^{+}$.
Set
\[
U_q:=\prod\limits_{i\in\Lambda_{q}^{+}}\prod\limits_{k=1}^{\ell_{q,i}}Z_{i,\acc{\bn}{\bm_{i}}}.
\]
By the definition of $G_{\bn }$, 
$U_q$ is an invertible element of $k[\uZ_{\le d,\ge\acc{\bn}{\bm_i}}]_{G_{\bn}}$.

Set
\[
E_{q,s,-1}:=
\sumsubu{(r_q,r_{i,k})\in\Theta_{q,\acc{\bn}{\bell_q}+s}^{+}\\ r_q<\acc{\bn}{\bm_q}}
Z_{q,r_q}\prod\limits_{i\in\Lambda_{q}^{+}}\prod\limits_{k=1}^{\ell_{q,i}}Z_{i,r_{i,k}}
-\sum\limits_{(r_{i,k})\in\Theta_{q,\acc{\bn}{\bell_q}+s}^{-}}\prod\limits_{i\in\Lambda_{q}^{-}}\prod\limits_{k=1}^{-\ell_{q,i}}Z_{i,r_{i,k}}.
\]
For $r\in \N$, set $\delta_{r,s}=1$ if $r=s$ and $0$ otherwise, and
\[
E_{q,s,r}=:
-\delta_{s,r}U_q+
\sumsubu{
(r_{i,k}\,;\,i\in\Lambda_{q}^{+},1\le k\le\ell_{q,i}))\,;\,r_{i,k}\in\N
\\(\acc{\bn}{\bm_q}+r,(r_{i,k}))\in\Theta_{q,\acc{\bn}{\bell_q}+s}^{+}}
\prod\limits_{i\in\Lambda_{q}^{+}}\prod\limits_{k=1}^{\ell_{q,i}}Z_{i,r_{i,k}}.
\]
Thus by \eqref{eq:form:equations},
one has
\[
\Fdisu{q}{\acc{\bn}{\bell_q}+s}=U_qZ_{q,\acc{\bn}{\bm_q}+s}+E_{q,s,-1}+\sum_{r\in \N}E_{q,s,r}Z_{q,\acc{\bn}{\bm_q}+r}.
\]
Since $\Lambda_{q}^{\pm}\subset \{1,\dots,d\}$, it is clear that for
  $r\in \N$ one has $E_{q,s,r}\in k[\uZ_{\leq d,\bullet}]$, and that 
  $E_{q,s,-1}\in k[\uZ_{\leq d,\bullet}\cup \uZ_{\bullet,<\acc{\bn}{\bm_i}}]$.

Thus (A1) is satisfied, and in order to show that (A2) also holds,
it remains to prove that for any $r\ge s$, each monomial of $E_{q,s,r}$
contains a variable $Z_{i,r_i}$ with $i\in \{1,\dots,d\}$ and $r_i<\acc{\bn}{\bm_i}$.
Take $(r_{i,k}\,;\,i\in\Lambda_{q}^{+},1\le k\le\ell_{q,i}))$ a family
of nonnegative integers such that $(\acc{\bn}{\bm_q}+r,(r_{i,k}))\in\Theta_{q,\acc{\bn}{\bell_q}+s}^{+}$, that is
\[
\acc{\bn}{\bm_q}+r+\sum\limits_{i\in\Lambda_{q}^{+}}\sum\limits_{k=1}^{\ell_{q,i}}r_{i,k}=\acc{\bn}{\bell_q}+s.
\]
We have to show that either at least one of the $r_{i,k}$'s is
$<\acc{\bn}{\bm_i}$ or $r=s$ and $r_{i,k}=\acc{\bn}{\bm_i}$ for every
$i,k$. (The latter case corresponds to the monomial
$U_qZ_{q,\acc{\bn}{\bm_q}+s}$.) Assume $r_{i,k}\geq \acc{\bn}{\bm_i}$ for every $i,k$. Then
\[
\acc{\bn}{\bell_q}+s=\acc{\bn}{\bm_q}+r+\sum\limits_{i\in\Lambda_{q}^{+}}\sum\limits_{k=1}^{\ell_{q,i}}r_{i,k}
\ge 
r+\acc{\bn}{\bm_q+\sum\limits_{i\in\Lambda_{q}^{+}}\ell_{k,i}\bm_{i}}=r+\acc{\bn}{\bell_q}.
\]
If $r>s$ this is a contradiction. If $r=s$, the first minoration must
be an equality, which imposes $r_{i,k}=\acc{\bn}{\bm_i}$ for every $i,k$.

Let us prove that (C) holds.
We have to show that $E_{q,0,-1}$ does not belong to the ideal $\ide{\uZ_{\bullet, <\acc{\bn}{\bm_i}}}$.
By the definition of $E_{q,0,-1}$ it is enough to show that
\[
\wt{E_{q,0,-1}}:=
-\sum\limits_{(r_{i,k})\in\Theta_{q,\acc{\bn}{\bell_q}}^{-}}\prod\limits_{i\in\Lambda_{q}^{-}}\prod\limits_{k=1}^{-\ell_{q,i}}Z_{i,r_{i,k}}
\]
does not belong to the ideal $\ide{\uZ_{\bullet, <\acc{\bn}{\bm_i}}}$.
But arguing similarly as above, one sees that the only monomial in
$\wt{E_{q,0,-1}}$ not belonging to the above ideal corresponds to $r_{i,k}=\acc{\bn}{\bm_i}$. Thus 
one has $\wt{E_{q,0,-1}}=\prod\limits_{i\in\Lambda_{q}^{-}}Z_{i,\acc{\bn}{\bm_i}}^{-\ell_{q,i}}\pmod{\ide{\uZ_{\bullet, <\acc{\bn}{\bm_i}}}}$
which allows to conclude.

Let us show that (B) holds. Since $\fkh$ is the extension of the ideal
$[\fkj]$ in $k[\uZ_{\infty}]_{G_{\bn }}$, it is generated by the
union of the families $\{H_{q,s};\,q\in \Gamma,\,s\in \N\}$
and $\{\Fdisu{q}{s};\,q\in \Gamma,\,s\in \N,\,s<\acc{\bn}{\bell_q}\}$.

Arguing similarly as above, one sees using \eqref{eq:form:equations}
that in case $s<\acc{\bn}{\bell_q}$ every monomial of
$\Fdisu{q}{s}$ must contain a variable $Z_{i,r}$ with $r<\acc{\bn}{\bm_i}$.
Thus $\fkh$ is generated by some elements of $\ide{\uX}$ and the $H_{\gamma,s}$'s.
By remark \ref{rema:fkh:generated}, assumption (B) holds in this case.
\end{proof}

\subsection{}\label{ssec:toric series}
Thanks to lemma \ref{lem:toric:hyp:formal:neighbourhood}, we can apply
    theorem \ref{theo:main:technical} in the complete intersection
    setting. In the proof of the following corollary, we shall see that
    this also holds in the toric setting.
\begin{corollary}\label{cor:toric:formal:neighbourhood}\label{cor:toric:reduction:complete:intersection:formal:stable}
Let $ \bn  $ be a toric valuation of $ \sigma\cap N $. 
There exists a $k[\uZ_{\leq d,\ge\acc{\bn}{\bm_i}}]$-algebra morphism
$\compl{\eps}\colon k[\uZ_{\infty}]\to k(\uZ_{\leq d,\ge\acc{\bn}{\bm_i}})\dblbr{\uZ_{\bullet,<\acc{\bn}{\bm_i}}}$
such that :
\begin{itemize}
\item [(i)] The section ring of the formal neighborhood of
  $\arc(V_{\sigma})$ at $ \eta_{\bn } $ (resp. of   $\arc(W)$ at $\eta'_{\bn}$)
  are both isomorphic to the complete noetherian local ring
  \[k(\uZ_{\leq d,\ge\acc{\bn}{\bm_i}})\dblbr{\uZ_{\bullet,<\acc{\bn}{\bm_i}}}/\ide{\compl{\eps}([\fkj])}.\]
\item [(ii)] For every $i\in \{1,\dots,h\}$, $\compl{\eps}(Z_{i,\acc{\bn}{\bm_i}})$ is invertible.
\item [(iii)] For every $ q\in \{d+1,\dots,h\} $ and $s\in\N$, we have $\compl{\eps}(\Fdisu{q}{\acc{\bn}{\bell_q}+s})=0$.
\end{itemize}
\end{corollary}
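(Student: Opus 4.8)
The plan is to deduce Corollary~\ref{cor:toric:formal:neighbourhood} directly from Theorem~\ref{theo:main:technical} applied in the complete intersection setting, via the dictionary established in Lemma~\ref{lem:toric:hyp:formal:neighbourhood}, and then to transfer the conclusion from the complete intersection setting to the toric setting using Lemma~\ref{lemm:quotient}(ii) and Lemma~\ref{lem:complete:intersection}. First, with the notation of Lemma~\ref{lem:toric:hyp:formal:neighbourhood} we have $A=k[\uZ_{\leq d,\geq\acc{\bn}{\bm_i}}]_{G_{\bn}}$, which is a domain (a localization of a polynomial ring), $\uX=\{Z_{i,s_i}\,;\,0\le s_i<\acc{\bn}{\bm_i}\}$, $\uY=\{Z_{q,\acc{\bn}{\bm_q}+s}\,;\,d+1\le q\le h,\,s\in\N\}$ indexed by $\Gamma\times\N$ with $\Gamma=\{d+1,\dots,h\}$, so that $\AX=A[\uX]$ has $\FrAX=k(\uZ_{\leq d,\geq\acc{\bn}{\bm_i}})[\uX]$ and $\FrAdX=k(\uZ_{\leq d,\geq\acc{\bn}{\bm_i}})\dblbr{\uZ_{\bullet,<\acc{\bn}{\bm_i}}}$. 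Lemma~\ref{lem:toric:hyp:formal:neighbourhood} guarantees that hypotheses (A), (B) and (C) of Theorem~\ref{theo:main:technical} are met for the ideal $\fkh$ = extension of $[\fkj]$ to $k[\uZ_{\infty}]_{G_{\bn}}$, with $H_{q,s}=\Fdisu{q}{\acc{\bn}{\bell_q}+s}$. Hence Theorem~\ref{theo:main:technical} produces the desired $\AX$-algebra morphism $\compl{\eps}$; conclusions (i), (iii) and (iv) of that theorem give respectively (iii), a first half of (i) and (ii) of the corollary (the invertibility of $\compl{\eps}(Z_{q,\acc{\bn}{\bm_q}})$ for $q>d$, while for $1\le i\le d$ it is immediate since $Z_{i,\acc{\bn}{\bm_i}}$ is already a unit in $A$).

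Next I would handle assertion (i) of the corollary. For the complete intersection setting, by Lemma~\ref{lem:complete:intersection} the local ring at $\eta'_{\bn}$ is the localization of $\struc{\arc(W)}_{g'_{\bn}}=k[\uZ_{\infty}]_{G_{\bn}}/[\fkj]$ at the prime $\fka_{\bn}=\oX{}$; since $[\fkj]=\fkh$ in $k[\uZ_{\infty}]_{G_{\bn}}$ by construction, its $\oX{}$-adic completion is, by conclusion (iii) of Theorem~\ref{theo:main:technical} applied with $\fkg=\fkh$, isomorphic to $\FrAdX/\compl{\eps}(\fkh)=k(\uZ_{\leq d,\geq\acc{\bn}{\bm_i}})\dblbr{\uZ_{\bullet,<\acc{\bn}{\bm_i}}}/\ide{\compl{\eps}([\fkj])}$, which is what we want. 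For the toric setting, Proposition~\ref{prop:ideal:stable:point}(ii) identifies the local ring at $\eta_{\bn}$ with the localization at $\oX{}$ of $k[\uZ_{\infty}]_{G_{\bn}}/([\fki_{\sigma}]+\fka_{\bn})$. The point is that $\fkg:=$ extension of $[\fki_{\sigma}]$ to $k[\uZ_{\infty}]_{G_{\bn}}$ satisfies the hypotheses of conclusion (iii) of Theorem~\ref{theo:main:technical}: indeed $\fkh=[\fkj]\subset[\fki_{\sigma}]=\fkg$ (as $\fkj\subset\fki_{\sigma}$), and Lemma~\ref{lemm:quotient}(ii) gives $[\fki_{\sigma}]+\fka_{\bn}=[\fkj]+\fka_{\bn}$ in $k[\uZ_{\infty}]_{G_{\bn}}$, i.e. $\fkg+\oX{}=\fkh+\oX{}$; it remains to check $\compl{\eps}(\fkg)=\compl{\eps}(\fkh)$. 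Since $\compl{\eps}(\fkh)\subset\compl{\eps}(\fkg)$ is automatic, one needs $\compl{\eps}([\fki_{\sigma}])\subset\compl{\eps}([\fkj])$; but this follows from Lemma~\ref{lemm:quotient}(i) (every $F_{\bell}$ lies in $\fkj:G_d^{\infty}$) combined with Lemma~\ref{lemm:loc:fra} or, more directly, because $\compl{\eps}$ factors through $k[\uZ_{\infty}]_{G_{\bn}}/\fka_{\bn}$ (recall $\compl{\eps}(\uX)=0$ is false, but $\compl{\eps}$ kills nothing extra — the cleaner route is to observe that in $\FrAdX/\compl{\eps}([\fkj])$, which is the completion of the localization at $\oX{}$ of the ring $k[\uZ_{\infty}]_{G_{\bn}}/([\fkj]+\fka_{\bn})=k[\uZ_{\infty}]_{G_{\bn}}/([\fki_{\sigma}]+\fka_{\bn})$, every element of $[\fki_{\sigma}]$ already vanishes). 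Once $\compl{\eps}(\fkg)=\compl{\eps}(\fkh)$ is established, conclusion (iii) of Theorem~\ref{theo:main:technical} applies to $\fkg$ and yields the asserted isomorphism for $\arc(V_{\sigma})$ at $\eta_{\bn}$, with the same ring $\FrAdX/\ide{\compl{\eps}([\fkj])}$. This gives both halves of (i).

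The main obstacle I expect is precisely the verification that $\compl{\eps}(\fki_{\sigma})\subseteq\compl{\eps}(\fkj)$ in $\FrAdX$ (equivalently that passing from $\fkj$ to the larger toric ideal $\fki_{\sigma}$ does not change the image under $\compl{\eps}$, so that the complete intersection model actually computes the toric formal neighborhood). The honest argument should invoke Lemma~\ref{lemm:quotient}: writing, for $\bell\in L$, a relation $G_d^N F_{\bell}=H$ with $H\in\fkj$ and then applying $\HS$ and using that $\compl{\eps}(Z_{i,\acc{\bn}{\bm_i}})$ is invertible for $1\le i\le d$ (conclusion (ii) of the corollary, i.e. (iv) of the theorem), one gets that the leading coefficient $G_{\bn}$ of the relevant power series is mapped to a unit, hence $\compl{\eps}(F_{\bell,s})$ lies in the ideal generated by the $\compl{\eps}(\Fdisu{q}{s'})$; this is the analogue, after applying $\compl{\eps}$, of the inclusion $[\ide{F_{\bell}}]\subset[\fkj]+\fka_{\bn}$ of Lemma~\ref{lemm:loc:fra}, combined with the fact that $\compl{\eps}$ kills $\fka_{\bn}=\oX{}$... which it does \emph{not} in the naive sense, so care is needed: rather, one uses that in the \emph{completed localized} ring $\FrAdX/\compl{\eps}([\fkj])$ the element $\compl{\eps}$ of $\fka_{\bn}$ need not vanish, but the quotient $k[\uZ_{\infty}]_{G_{\bn}}/([\fkj]+\fka_{\bn})$ localized at $\oX{}$ is exactly the ring whose completion we computed, and its quotient by $[\fki_{\sigma}]$ equals itself by Lemma~\ref{lemm:quotient}(ii), which forces $\ide{\compl{\eps}([\fki_{\sigma}])}=\ide{\compl{\eps}([\fkj])}$. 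Organizing this last point cleanly — staying inside the hypotheses of Theorem~\ref{theo:main:technical}(iii) rather than arguing ring-theoretically in several different rings at once — is the delicate part; everything else is a bookkeeping translation of the abstract theorems into the toric dictionary of Lemma~\ref{lem:toric:hyp:formal:neighbourhood}.
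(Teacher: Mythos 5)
Your overall route coincides with the paper's: apply theorem \ref{theo:main:technical} through the dictionary of lemma \ref{lem:toric:hyp:formal:neighbourhood}, read off (iii) and (ii) from conclusions (i) and (iv) (plus the trivial unit check for $i\le d$), obtain the complete intersection half of (i) from conclusion (iii) with $\fkg=\fkh$, and obtain the toric half by applying conclusion (iii) to $\fkg=$ the extension of $[\fki_{\sigma}]$, using lemma \ref{lemm:quotient}(ii) for $\ide{\uX}+\fkh=\ide{\uX}+\fkg$. All of that is correct bookkeeping. The gap is exactly at the point you flag as delicate: the equality $\ide{\compl{\eps}([\fki_{\sigma}])}=\ide{\compl{\eps}([\fkj])}$. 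Your first sketch (write $G_d^NF_{\bell}\in\fkj$, apply $\HS$ and $\compl{\eps}$, ``hence $\compl{\eps}(F_{\bell,s})$ lies in the ideal'') skips the only nontrivial step: what you get is that $\compl{\eps}(\HS(G_d^N))\cdot\compl{\eps}(\HS(F_{\bell}))$ has all coefficients in $\ide{\compl{\eps}([\fkj])}$, and the factor $\compl{\eps}(\HS(G_d^N))$ is not a unit of $K\dblbr{\uX}\dbT$ (it is only a regular element, of order $N\sum_{i\le d}\acc{\bn}{\bm_i}$), so one cannot simply divide. The paper supplies precisely the missing cancellation: since $\compl{\eps}(Z_{i,\acc{\bn}{\bm_i}})$ is a unit for $i\le d$, the image of $\HS(G_d^N)$ remains a regular element of $\cB\dbT$ with $\cB=K\dblbr{\uX}/\ide{\compl{\eps}([\fkj])}$ (a complete noetherian local ring), hence a non-zero-divisor by \S\ref{subsec:WPT}, and therefore $\compl{\eps}(\HS(F_{\bell}))=0$ in $\cB\dbT$, which gives the coefficientwise membership $\compl{\eps}(F_{\bell,s})\in\ide{\compl{\eps}([\fkj])}$.

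Your fallback ``cleaner route'' does not repair this and is incorrect as stated: the ring whose $\oX{}$-adic completion is $\FrAdX/\compl{\eps}(\fkh)$ is the localization of $k[\uZ_{\infty}]_{G_{\bn}}/[\fkj]$ at the prime $\fka_{\bn}$, not (a localization of) the quotient $k[\uZ_{\infty}]_{G_{\bn}}/([\fkj]+\fka_{\bn})$ --- localizing that quotient at $\fka_{\bn}$ just yields the residue field of $\eta'_{\bn}$. Moreover lemma \ref{lemm:quotient}(ii) only asserts $[\fki_{\sigma}]+\fka_{\bn}=[\fkj]+\fka_{\bn}$, an equality after adding $\fka_{\bn}=\oX{}$; since $\compl{\eps}$ fixes the variables $\uX$ rather than annihilating them, this alone says nothing about the ideals generated by $\compl{\eps}([\fki_{\sigma}])$ and $\compl{\eps}([\fkj])$ and cannot ``force'' their equality. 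So as written the crux of the corollary remains unproved; once the regular-element cancellation above is inserted, your argument becomes the paper's proof.
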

\begin{proof}[Proof of corollary \ref{cor:toric:formal:neighbourhood}]
By lemma \ref{lemm:quotient}(ii) and theorem
\ref{theo:main:technical}(iii), it only remains to show that 
$\compl{\eps}([\fkj])=\compl{\eps}([\fki_{\sigma}])$.
 Recall from \S \ref{ssec:jetarc}
the definition of $\HS$. It is enough to show that for every element
$F$ of $\fki_{\sigma}$, one has 
$\compl{\eps}(\HS(F))\in \compl{\eps}([\fkj])\dblbr{\vart}$.
By lemma \ref{lemm:quotient}, there exists a positive integer $N$ such
that $(\prod\limits_{i=1}^dZ_i)^NF\in \fkj$. Thus
\[
\compl{\eps}(\HS((\prod\limits_{i=1}^dZ_i)^N))\,\, \compl{\eps}(\HS(F))\in
\compl{\eps}([\fkj])\dblbr{\vart}.
\]
Since for every $i\in \{1,\dots,d\}$,
$\compl{\eps}(Z_{i,\acc{\bn}{\bm_i}})$ is a unit, 
$\compl{\eps}(\HS((\prod\limits_{i=1}^dZ_i)^N))$ is a regular element
of $k(\uZ_{\leq d,\ge\acc{\bn}{\bm_i}})\dblbr{\uZ_{\bullet,<\acc{\bn}{\bm_i}}}\dblbr{\vart}$,
as well as its projection to
\[k(\uZ_{\leq  d,\ge\acc{\bn}{\bm_i}})\dblbr{\uZ_{\bullet,<\acc{\bn}{\bm_i}}}/\compl{\eps}([\fkj])\dblbr{\vart}.
\]
Since a regular element is not a zero divisor, we infer that 
$\compl{\eps}(\HS(F))\in \compl{\eps}([\fkj])\dblbr{\vart}.$
\end{proof}

\subsection{}\label{ssec:finite:formal:model:rational:arc}
Let us recall the definition of some objects in \cite[Subsection  5.1]{Bou-Seb:toric}, adapted to the notation in the present
section. Denote by $\wt{\eps}\colon k[\uZ_{\infty}]\to k\dblbr{\uZ_{\bullet,<\acc{\bn}{\bm_i}}}$
the unique $k$-algebra morphism mapping, 
for every $i\in \{1,\dots,h\}$, $Z_{i,s}$ to $Z_{i,s}$  for $s<\acc{\bn}{\bm_i}$,
$Z_{i,\acc{\bn}{\bm_i}}$ to $1$ and $Z_{i,s}$ to $0$ for $s>\acc{\bn}{\bm_i}$.

For $ L'\subseteq L $, let $W(\bn,L')$ be
the affine closed $ k $-subscheme of the affine space $ \Spec
(k[\uZ_{\bullet,<\acc{\bn}{\bm_i}}]) $ defined by the ideal
$\ide{\wt{\eps}(F_{\bell,s})\,;\,\bell\in L',s\in\N}$
and $\cW(\bn,L')$ the formal completion of
$W(\bn,L')$ along the origin of $\Spec(k[\uZ_{\bullet,<\acc{\bn}{\bm_i}}])$.
\begin{remark}\label{rema:old:remark}
Let $(\cA,\fkM_{\cA})$ be an object of $\Lacp_k$. Then $\Hom_{\Lacp_k}(\cW(\bn,L'),\cA)$
is in natural bijection with the set of families $\{z_{i,s};\,i\in \{1,\dots,h\},\,0\leq s<\acc{\bn}{\bm_i}\}$
of elements of $\fkM_{\cA}$ such that for every element $\bell\in L'$ one has
\[
F_{\bell}|_{Z_i=\sum_{s=0}^{\acc{\bn}{\bm_i}-1}z_{i,s}\vart^i+\vart^{\acc{\bn}{\bm_i}}}=0.
\]
\end{remark}
The following result follows from \cite[Theorem 5.2]{Bou-Seb:toric}.
\begin{thm}\label{thm:japan}
  For an appropriate choice of $L'\subseteq L$ such that
  $\{\bell_q\,;\,d+1\le q \le h\}\subseteq L'$, for every toric
  valuation $\bn\in N\cap \sigma$ and every arc $\alpha\in \arc(V_{\sigma})^{\circ}_{\bn}(k)$,
the formal neighborhood of $\arc(V_{\sigma})$ at $\alpha$ is
isomorphic to $\cW(\bn,L')\hat\otimes_k k\dblbr{(T_i)_{i\in\N}}$.
\end{thm}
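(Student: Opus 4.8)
The plan is to deduce Theorem~\ref{thm:japan} directly from \cite[Theorem 5.2]{Bou-Seb:toric} by unwinding the dictionary between the objects $\cW(\bn,L')$ defined here and the finite formal models constructed in \opcit, and by checking that the non-degenerate rational arcs $\alpha\in\arc(V_{\sigma})^{\circ}_{\bn}(k)$ are exactly the arcs to which that theorem applies. First I would recall that by Lemma~\ref{lemm:decr:mds:toric:set}(iv) such an $\alpha$ is a $k$-rational point of the open subset $\arc^{\circ}(V_{\sigma})_{\bn}$ of the Nash set $\mds_{\bn}$; in particular $\ord_{\alpha}=\ord_{\bn}=v$, so $\alpha$ is a non-degenerate rational arc with divisorial order exactly the valuation $\bn$. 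This is precisely the situation governed by \cite[Theorem 5.2]{Bou-Seb:toric}, which expresses the formal neighborhood $\compl{\arc(V_{\sigma})_{\alpha}}$ as the completed tensor product of a finite formal model with $k\dblbr{(T_i)_{i\in\N}}$.

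The second step is to identify that finite formal model with $\cW(\bn,L')$ for a suitable $L'$. Here Remark~\ref{rema:old:remark} is the key technical device: it gives a functorial description of $\Hom_{\Lacp_k}(\cW(\bn,L'),\cA)$ as the set of tuples $\{z_{i,s}\}$ in $\fkM_{\cA}$ killing the binomials $F_{\bell}$ after the substitution $Z_i\mapsto \sum_{s<\acc{\bn}{\bm_i}}z_{i,s}\vart^s+\vart^{\acc{\bn}{\bm_i}}$ for $\bell\in L'$. I would match this against the functor of points of the finite formal model in \opcit, which parametrizes infinitesimal deformations of the arc $\alpha_{\bn}\colon\bm\mapsto\vart^{\acc{\bm}{\bn}}$ that remain in $\arc^{\circ}(V_{\sigma})_{\bn}$; the morphism $\wt{\eps}$ defined just above is exactly the algebraic incarnation of specializing a general arc to this normal form (sending the ``leading'' coordinates $Z_{i,\acc{\bn}{\bm_i}}$ to $1$ and the higher ones to $0$). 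The choice of $L'$ is dictated by \opcit: one takes a finite subset of $L$ that, together with the distinguished relations $\bell_q$ ($d+1\le q\le h$) already singled out in \S\ref{subsec:toric:ideal}, generates enough of the toric ideal to cut out the finite model; the inclusion $\{\bell_q\}\subseteq L'$ can always be arranged since enlarging $L'$ only cuts down the deformation space by relations that are redundant modulo the $\bell_q$'s (compare Lemma~\ref{lemm:quotient}).

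The third step is the genericity clause ``for every arc $\alpha\in\arc(V_{\sigma})^{\circ}_{\bn}(k)$''. The point is that \cite[Theorem 5.3]{Bou-Seb:toric} (or the constancy statement \cite[Theorem 1.3]{Bou-Seb:toric} quoted in the introduction) shows that on a toric Nash set the isomorphism class of the finite formal model does not depend on the chosen rational arc, so once $L'$ is fixed so as to work for one arc it works for all of them simultaneously; this is what lets me quantify over all $\alpha$ rather than just a general one, and is the feature that makes the comparison theorem possible in the first place.

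I expect the main obstacle to be purely bookkeeping: making the translation between the indexing conventions of \cite{Bou-Seb:toric} and those of the present paper completely explicit — in particular tracking how the truncation levels $\acc{\bn}{\bm_i}$, the distinguished variables $Z_{i,\acc{\bn}{\bm_i}}$, and the relations $F_{\bell,s}$ versus $\wt{\eps}(F_{\bell,s})$ correspond, and verifying that the $T_i$'s absorb exactly the ``free'' coordinates $Z_{i,s}$ with $s>\acc{\bn}{\bm_i}$ (for each of the $d$ basis generators) that do not enter the finite model. Once the dictionary is pinned down, the statement is a direct transcription of \opcit, so I would keep the proof short and refer to \cite[Theorem 5.2]{Bou-Seb:toric} for the substantive content.
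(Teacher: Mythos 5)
Your proposal is correct and matches the paper's treatment: the paper gives no argument beyond stating that the result "follows from \cite[Theorem 5.2]{Bou-Seb:toric}", and your plan is exactly that deduction, with the additional (reasonable) bookkeeping of matching $\cW(\bn,L')$ to the finite formal model of \emph{op.\ cit.}\ via the functor-of-points description in remark \ref{rema:old:remark}.
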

The following lemma shows that for the computation of formal neighborhoods of $k$-rational arcs on $\arc(V_{\sigma})$,
one may also reduce to the complete intersection setting.
\begin{lemma}\label{lem:toric:reduction:complete:intersection:formal:rational}
Let $ \bn \in \sigma\cap N$ be a toric valuation and $ L'$ be a
subset of $L$ such that $ \{\bell_q\,;\,d+1\le q \le h\}\subseteq L'$.
Then $\cW(\bn,L')$ is isomorphic, as a formal $k$-scheme, to $\cW(\bn,\{\bell_q:d+1\le q\le h\})$.
\end{lemma}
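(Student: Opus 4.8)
The plan is to reduce the statement to the observation that the two formal schemes $\cW(\bn,L')$ and $\cW(\bn,\{\bell_q\,;\,d+1\le q\le h\})$ represent the same functor on $\Lacp_k$. Using remark \ref{rema:old:remark}, for an object $(\cA,\fkM_{\cA})$ of $\Lacp_k$ a point of $\cW(\bn,L')$ is a tuple $\{z_{i,s}\}$ of elements of $\fkM_{\cA}$ (indexed by $1\le i\le h$, $0\le s<\acc{\bn}{\bm_i}$) such that $F_{\bell}|_{Z_i=\sum_{s<\acc{\bn}{\bm_i}}z_{i,s}\vart^s+\vart^{\acc{\bn}{\bm_i}}}=0$ for every $\bell\in L'$, and similarly for $\{\bell_q\}$; since $\{\bell_q\}\subseteq L'$, there is an obvious natural transformation from the functor of points of $\cW(\bn,L')$ to that of $\cW(\bn,\{\bell_q\})$, and it suffices to show it is a bijection, i.e.\ that the vanishing of the $F_{\bell_q}$ already forces the vanishing of all $F_{\bell}$ for $\bell\in L'$ (in fact for $\bell\in L$).

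The key step is therefore the following: given a tuple $\{z_{i,s}\}$ in $\fkM_{\cA}$ satisfying the relations coming from the $\bell_q$, set $\zeta_i:=\sum_{s<\acc{\bn}{\bm_i}}z_{i,s}\vart^s+\vart^{\acc{\bn}{\bm_i}}\in\cA\dbT$; I must show $F_{\bell}|_{Z_i=\zeta_i}=0$ for all $\bell\in L$. Observe that each $\zeta_i$ is a regular element of $\cA\dbT$ of order exactly $\acc{\bn}{\bm_i}$ (its leading term is $\vart^{\acc{\bn}{\bm_i}}$ with unit coefficient $1$), hence a non-zero-divisor by the remark after \S\ref{subsec:WPT}. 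The relations $F_{\bell_q}|_{Z_i=\zeta_i}=0$ say precisely, in view of \eqref{eq:F_q}, that $\zeta_q=\prod_{\ell_{q,i}<0}\zeta_i^{-\ell_{q,i}}\big/\prod_{\ell_{q,i}>0, i\le d}\zeta_i^{\ell_{q,i}}$, i.e.\ that the assignment $\bm\mapsto\zeta(\bm)$ on the $\Z$-basis $\bm_1,\dots,\bm_d$ of $M$ extends to a group homomorphism $M\to(\cA\dbT)^{\inv}$ which sends $\bm_q$ to $\zeta_q$ for $d+1\le q\le h$ as well (the subtle point being that one must check the $\zeta_i$ for $i\le d$ are units in $\cA\dbT$ — but they are not, they have positive order). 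I would instead pass to $\cA\dbT[\vart^{-1}]$, or rather note that after multiplying through by suitable powers of $\vart$ the $F_{\bell}$ relations become polynomial identities among the $\zeta_i$: for $\bell\in L$ with $\bell=\sum_q c_q\bell_q$ ($c_q\in\Z$, since $\{\bell_q\}$ spans $L$ by \cite[Lemma 12.2]{Stu96} as used in lemma \ref{lemm:quotient}), the monomial identity $\prod_i\zeta_i^{\ell_i^+}=\prod_i\zeta_i^{\ell_i^-}$ follows formally from the identities $\prod_i\zeta_i^{\ell_{q,i}^+}=\prod_i\zeta_i^{\ell_{q,i}^-}$ by multiplicativity, once one is allowed to cancel common powers of non-zero-divisors; since all the $\zeta_i$ are non-zero-divisors in $\cA\dbT$, this cancellation is legitimate. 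Hence $F_{\bell}|_{Z_i=\zeta_i}=0$ for every $\bell\in L$, in particular for every $\bell\in L'$, which gives the desired bijection of functors and therefore an isomorphism of formal $k$-schemes.

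The main obstacle is the bookkeeping in the last step: the groups $L$ and the exponent vectors $\bell^{\pm}$ interact with the normalization $\ell_{q,q}=1$ (and the absence of $Z_{d+1},\dots,Z_h$ other than $Z_q$ in $F_{\bell_q}$) in a way that must be tracked carefully so that the "cancellation of non-zero-divisors" argument applies uniformly; equivalently, one wants to invoke that the map $\bell\mapsto\big(\prod_i\zeta_i^{\ell_i^+}-\prod_i\zeta_i^{\ell_i^-}\big)$, viewed inside the localization of $\cA\dbT$ at the multiplicative set generated by all $\zeta_i$, is a group homomorphism $L\to(\text{that localization})^{\inv}$ that vanishes on the generators $\bell_q$, hence is identically trivial, and then pull this back to $\cA\dbT$ using that $\cA\dbT\hookrightarrow$ that localization (injectivity because the $\zeta_i$ are non-zero-divisors). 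Once this is set up cleanly the proof is short; everything else (the translation via remark \ref{rema:old:remark}, Yoneda) is formal.
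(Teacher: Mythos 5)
Your proposal is correct and takes essentially the same route as the paper's proof: compare the functors of points on $\Lacp_k$ via remark \ref{rema:old:remark}, and use that the substituted series $z_i(\vart)$ (your $\zeta_i$) are Weierstrass polynomials, hence non-zero-divisors in $\cA\dbT$, to cancel and deduce $F_{\bell}|_{Z_i=z_i(\vart)}=0$ for all $\bell\in L'$ from the vanishing of the $\Fdis{q}$. The only difference is organizational: the paper imports from lemma \ref{lemm:quotient} the membership $(\prod_{i}Z_i)^N F_{\bell}\in\ide{\Fdis{q}\,;\,d+1\le q\le h}$ and cancels $(\prod_i z_i(\vart))^N$, whereas you rederive the needed identity directly (in effect inlining the Sturmfels-type argument) from the elementary fact that the $\bell_q$ generate $L$, working in the localization of $\cA\dbT$ at the $z_i(\vart)$, into which $\cA\dbT$ injects.
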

\begin{remark}\label{rema:Wn}
Thanks to this lemma, for any $L'\subseteq L$ such that  $\{\bell_q\,;\,d+1\le q \le h\}\subseteq L'$
and any $\bn\in \sigma\cap N$, one may denote $\cW(\bn,L')$ by $\cW(\bn)$.
\end{remark}
\begin{proof}
By remark \ref{rema:old:remark}, there is, for every object
$(\cA,\fkM_{\cA})$ of $\Lacp_k$, a natural inclusion
$\Hom_{\Lacp_k}(\cW(\bn,L'),\cA)\subset \Hom_{\Lacp_k}(\cW(\bn,\{\bell_q\,;\,d+1\le q \le h\}),\cA)$.
To conclude, it suffices to show that this is an equality.
Let $\{z_{i,s};\,i\in \{1,\dots,h\},\,0\leq s<\acc{\bn}{\bm_i}\}$
be a family of elements of $\fkM_{\cA}$ such that, setting
\[
z_i(\vart):=\sum_{s=0}^{\acc{\bn}{\bm_i}-1}z_{i,s}\vart^i+\vart^{\acc{\bn}{\bm_i}},
\]
one has, for every $d+1\le q \le h$,
$
\Fdis{q}|_{Z_i=z_i(\vart)}=0.
$
Let $\bell\in L'$. By lemma \ref{lemm:quotient} there exists a
positive integer $N$ such that
$(\prod_{i=1}^hZ_i)^NF_{\bell}\in \ide{\Fdis{q}\,;\,d+1\le q \le h}$.
Thus 
\[
\left(\prod_{i=1}^hz_i(\vart)\right)^NF_{\bell}|_{Z_i=z_i(\vart)}=0.
\]
Since $z_i(\vart)$ is a Weierstrass polynomial in $\cA\dblbr{t}$, it is a non zero
divisor (see \ref{subsec:WPT}). Thus one infers that $F_{\bell}|_{Z_i=z_i(\vart)}=0$.
That concludes the proof.
\end{proof}

The following proposition performs the aimed comparison in the
complete intersection setting.

\begin{proposition}\label{prop:comparison}
Let $\bn\in\sigma\cap N $ be a toric valuation.  Let
$K:=k(\uZ_{\leq d,\ge\acc{\bn}{\bm_i}})$. Then the residue field of
$\eta'_{\bn }$ is isomorphic to $K$ and the formal neighborhood
of $ \arc(W) $ at the point $ \eta'_{\bn } $ is isomorphic, as a
formal $K$-scheme, to $K\compl{\otimes}_{k}\cW(\bn,\{\bell_q:d+1\le q\le h\})$.
\end{proposition}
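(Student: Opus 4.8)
The plan is to obtain the statement by feeding the presentation of $\compl{\arc(W)_{\eta'_{\bn}}}$ furnished by corollary \ref{cor:toric:formal:neighbourhood} into the abstract comparison theorem \ref{thm:comparison:technical}. Concretely, I would fix the morphism $\compl{\eps}\colon k[\uZ_{\infty}]\to K\dblbr{\uZ_{\bullet,<\acc{\bn}{\bm_i}}}$ supplied by that corollary (with $K:=k(\uZ_{\leq d,\ge\acc{\bn}{\bm_i}})$) and apply theorem \ref{thm:comparison:technical} with the following data: $\Delta:=\{1,\dots,h\}$, $d_i:=\acc{\bn}{\bm_i}$, the set of variables $\uX$ identified with $\uZ_{\bullet,<\acc{\bn}{\bm_i}}$ through $X_{i,j}:=Z_{i,j}$, the finite set $\Omega:=\{d+1,\dots,h\}$, the binomials $P_q:=F_{\bell_q}$ (viewed in $K[\uY]$, with $Y_i$ standing for $Z_i$), and $x_{i,j}:=\compl{\eps}(Z_{i,j})$ for $j\geq\acc{\bn}{\bm_i}$. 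Unwinding the definitions, the series $\cY_i(\vart)$ (resp.\ $\wt{\cY_i}(\vart)$) of theorem \ref{thm:comparison:technical} is then obtained by applying $\compl{\eps}$ (resp.\ the morphism $\wt{\eps}$ recalled in \S\ref{ssec:finite:formal:model:rational:arc}) coefficientwise to $\HS(Z_i)=\sum_{s}Z_{i,s}\vart^s$; hence, by remark \ref{rema:definition:homogeneous:coefficients}, one has $P_{q,s,\cuY(\vart)}=\compl{\eps}(F_{\bell_q,s})$ and $P_{q,s,\wt{\cuY}(\vart)}=\wt{\eps}(F_{\bell_q,s})$ for every $s\in\N$.

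I would then check the hypotheses of theorem \ref{thm:comparison:technical}. Hypothesis (I) is immediate, $F_{\bell_q}$ being a binomial with nonnegative exponents. For (II): since $\bell_q\in L=\Ker(\pi)$ one has $\pi(\bell_q^{+})=\pi(\bell_q^{-})$ in $M$, so pairing with $\bn$ and using the linearity of $\acc{\bn}{\cdot}$ yields $F_{\bell_q}|_{Y_i=\vart^{d_i}}=0$, with common exponent $c_q=\acc{\bn}{\pi(\bell_q^{+})}=\acc{\bn}{\bell_q}$. Hypothesis (a) holds because for $i\leq d$ the element $\compl{\eps}(Z_{i,\acc{\bn}{\bm_i}})=Z_{i,\acc{\bn}{\bm_i}}$ is a nonzero element of the field $K$, hence a unit, while for $q\geq d+1$ the element $\compl{\eps}(Z_{q,\acc{\bn}{\bm_q}})$ is a unit by corollary \ref{cor:toric:formal:neighbourhood}(ii). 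Finally, hypothesis (b) asks that $P_{q,s,\cuY(\vart)}=\compl{\eps}(F_{\bell_q,s})$ vanish for $s\geq c_q=\acc{\bn}{\bell_q}$, which is exactly corollary \ref{cor:toric:formal:neighbourhood}(iii).

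It remains to identify the two ideals $\fka$ and $\fkb$ of $\KdX=K\dblbr{\uZ_{\bullet,<\acc{\bn}{\bm_i}}}$ occurring in theorem \ref{thm:comparison:technical}. Since $[\fkj]$ is generated by the $F_{\bell_q,s}$'s, the first paragraph gives $\fka=\compl{\eps}([\fkj])$, so by corollary \ref{cor:toric:formal:neighbourhood}(i) the ring $\KdX/\fka$ is the section ring of $\compl{\arc(W)_{\eta'_{\bn}}}$; as $\compl{\eps}([\fkj])\subseteq\oX{}$, its residue field is $K$, which also settles the assertion on $\kappa(\eta'_{\bn})$. On the other hand, $\fkb$ is the extension to $\KdX$ of the ideal $\ide{\wt{\eps}(F_{\bell_q,s})\,;\,d+1\leq q\leq h,\ s\in\N}$, which by construction cuts out $W(\bn,\{\bell_q:d+1\leq q\leq h\})$ inside $\Spec(k[\uZ_{\bullet,<\acc{\bn}{\bm_i}}])$; since $\uZ_{\bullet,<\acc{\bn}{\bm_i}}$ is a finite set, completion at the origin commutes with passing to that quotient and with the base change $\compl{\otimes}_{k}K$, so $\KdX/\fkb$ is the section ring of $K\compl{\otimes}_{k}\cW(\bn,\{\bell_q:d+1\leq q\leq h\})$. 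Theorem \ref{thm:comparison:technical} then provides an isomorphism $\KdX/\fka\isom\KdX/\fkb$ in $\Lacp_K$ which, being built from $K$-algebra automorphisms of $\KdX$, respects the $K$-structure, and this is exactly the sought isomorphism of formal $K$-schemes. I expect the only real work here to be bookkeeping, namely aligning the index conventions of the two abstract theorems with the toric data and checking that the power series rings, the field $K$ and the substitution series match on the nose; I do not anticipate a genuine obstacle, since the two substantive ingredients, the presentation $\KdX/\compl{\eps}([\fkj])$ and the vanishing and invertibility statements (ii) and (iii), are exactly what corollary \ref{cor:toric:formal:neighbourhood} delivers.
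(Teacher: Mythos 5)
Your proposal is correct and follows essentially the same route as the paper: it feeds the data $\Delta=\{1,\dots,h\}$, $d_i=\acc{\bn}{\bm_i}$, $P_q=\Fdis{q}$, $x_{i,s}=\compl{\eps}(Z_{i,s})$ into theorem \ref{thm:comparison:technical}, verifies (a) and (b) via corollary \ref{cor:toric:formal:neighbourhood}(ii)--(iii) with $c_q=\acc{\bn}{\bell_q}$, and identifies the two quotients of $K\dblbr{\uZ_{\bullet,<\acc{\bn}{\bm_i}}}$ with the formal neighborhood at $\eta'_{\bn}$ and with $K\compl{\otimes}_k\cW(\bn,\{\bell_q:d+1\le q\le h\})$, exactly as in the paper's proof (your extra remarks on hypotheses (I)--(II) and on the $K$-linearity of the isomorphism are details the paper leaves implicit).
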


\begin{proof}
We still denote by $\wt{\eps}$
the composition of the morphism defined 
 in \S\ref{ssec:finite:formal:model:rational:arc}
with the natural inclusion morphism $k\dblbr{\uZ_{\bullet,<\acc{\bn}{\bm_i}}}\to K\dblbr{\uZ_{\bullet,<\acc{\bn}{\bm_i}}}$.

By corollary \ref{cor:toric:formal:neighbourhood} and the very
definition of $\cW(\bn,\{\bell_q:d+1\le q\le h\})$, it is enough to show
that the quotients of $K\dblbr{\uZ_{\bullet,<\acc{\bn}{\bm_i}}}$ by
the ideals
$\ide{\compl{\eps}([\fkj])}=\ide{\compl{\eps}(\Fdis{q,s}):d+1\leq  q\leq h,s\in \N}$ on one hand,
$\ide{\wt{\eps}([\fkj])}=\ide{\wt{\eps}(\Fdis{q,s}):d+1\leq  q\leq h,s\in \N}$
on the other hand, are isomorphic.

We aim to apply theorem \ref{thm:comparison:technical}.
Set $\Delta:=\{1,\dots,h\}$.
For $i\in \Delta$, set $d_i:=\acc{\bn}{\bm_i}$;
for $0\leq s< \acc{\bn}{\bm_i}$ set $X_{i,s}:=Z_{i,s}$
and for $s\geq \acc{\bn}{\bm_i}$ set
$x_{i,s}:=\compl{\eps}(Z_{i,s})$.
For $i\in \Delta$, set
\[
\cY_{i}(\vart):=\sum_{s\in \N}\compl{\eps}(Z_{i,s})\vart^s
=\sum_{s=0}^{\acc{\bn}{\bm_i}-1}X_{i,s}\vart^s+\sum_{s\geq \acc{\bn}{\bm_i}}x_{i,s}\vart^s
\]
\[
\text{and}\quad \wt{\cY}_{i}(\vart):=\sum_{s=0}^{\acc{\bn}{\bm_i}-1}X_{i,s}\vart^s+\vart^{\acc{\bn}{\bm_i}}.
\]
For every $P\in k[\uZ]$, we then have (see
notation \ref{not:homogeneous:coefficients} and remark
\ref{rema:definition:homogeneous:coefficients})
$P_{s,\cuY(\vart)}=\compl{\eps}(P_{s})$
and $P_{s,\wt{\cuY}(\vart)}=\wt{\eps}(P_{s})$.

Set $\Omega:=\{d+1,\dots,h\}$ and for $q\in\Omega$ set $P_q:=\Fdis{q}$.

Assumption (a) is a consequence of
corollary \ref{cor:toric:formal:neighbourhood}.

With our identifications, the nonzero integer $ c_q $ defined in
the statement of theorem \ref{thm:comparison:technical} is
\[
c_q=\sum\limits_{i=1}^{h} (\bell^+_{q})_i\acc{\bn}{\bm_i}
=
\acc{\bn}{\sum\limits_{\substack{i=1\\\ell_{q,i}\ge 0}}^{h}\ell_{q,i}\bm_i}
=\acc{\bn}{\bell_q}. 
\]
Then still by corollary \ref{cor:toric:formal:neighbourhood},
for every $s\in \N$ we have $\compl{\eps}(\Fdisu{q}{\acc{\bn}{\bell_q}+s})=0$.
Thus $\Fdisu{q}{\acc{\bn}{\bell_q}+s,\cuY(\vart)}=0$ and assumption $ (b) $ holds.
That concludes the proof.
\end{proof}

\subsection{}\label{ssec:comparison}
Now one can state the main theorem of the article.
It illustrates the striking fact that not only the isomorphism class of the formal neighborhood of a
generic $k$-rational arc of the Nash set associated with a toric valuation
is constant (as observed in \cite{Bou-Seb:toric}) but moreover the
involved isomorphism class is encoded in some sense in the formal
neighborhood of the generic point of the Nash set. This could be
interpreted as the fact that the arc scheme of a toric variety is
analytically a product along the Nash set associated with the toric valuation.
\begin{thm}
\label{thm:comparison}
Let $ \bn\in \sigma\cap N $ be a toric valuation. Let $\eta_{\bn}$ be
the generic point of the Nash set $\mds_{\bn}$.
Let $\cW(\bn)$ be the noetherian formal $k$-scheme defined in remark \ref{rema:Wn}.

Then there exists a nonempty open subset $U_{\bn}$ of the Nash set $\mds_{\bn}$ such that:
\begin{itemize}\renewcommand{\labelitemi}{$\bullet$}
\item [(i)] The formal neighborhood of $\arc(V_{\sigma})$ at $\eta_{\bn }$ is isomorphic, as a formal
$\kappa(\eta_{\bn})$-scheme, to $\kappa(\eta_{\bn})\widehat{\otimes}_{k}\cW(\bn)$.
In particular it is isomorphic to the formal spectrum of the
completion of an essentially of finite type local $\kappa(\eta_{\bn})$-algebra.
\item [(ii)] For any arc $\alpha\in U_{\bn}(k)$,
  the formal neighborhood of $\arc(V_{\sigma})$ at $\alpha$ is
  isomorphic to $\cW(\bn)\widehat{\otimes}_k k\dblbr{(T_i)_{i\in\N}}$.
\end{itemize}
\end{thm}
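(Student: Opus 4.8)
The strategy is to assemble the pieces already prepared in the previous subsections, so the main work is bookkeeping rather than new ideas. For part (i), the point is that Corollary \ref{cor:toric:formal:neighbourhood} already identifies the section ring of the formal neighborhood of $\arc(V_{\sigma})$ at $\eta_{\bn}$ with the complete noetherian local ring $K\dblbr{\uZ_{\bullet,<\acc{\bn}{\bm_i}}}/\ide{\compl{\eps}([\fkj])}$, where $K:=k(\uZ_{\leq d,\ge\acc{\bn}{\bm_i}})$. Combining this with Proposition \ref{prop:comparison}, which identifies that same ring (through the complete intersection setting at $\eta'_{\bn}$) with the section ring of $K\compl{\otimes}_k\cW(\bn,\{\bell_q:d+1\le q\le h\})$, and then invoking Lemma \ref{lem:toric:reduction:complete:intersection:formal:rational} and Remark \ref{rema:Wn} to rewrite $\cW(\bn,\{\bell_q:d+1\le q\le h\})$ as $\cW(\bn)$, yields an isomorphism of the formal neighborhood at $\eta_{\bn}$ with $\kappa(\eta_{\bn})\compl{\otimes}_k\cW(\bn)$; here one uses Proposition \ref{prop:comparison} to know $\kappa(\eta_{\bn})\cong K$. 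Since $\cW(\bn)$ is the formal completion of an affine $k$-scheme of finite type along a rational point, $\kappa(\eta_{\bn})\compl{\otimes}_k\cW(\bn)$ is the formal spectrum of the completion of an essentially of finite type local $\kappa(\eta_{\bn})$-algebra, which gives the ``in particular'' clause.

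For part (ii), the plan is to combine Theorem \ref{thm:japan} with Lemma \ref{lem:toric:reduction:complete:intersection:formal:rational} and Remark \ref{rema:Wn}. Theorem \ref{thm:japan} says there is a fixed $L'\subseteq L$ containing $\{\bell_q:d+1\le q\le h\}$ such that for every toric valuation $\bn$ and every $k$-rational arc $\alpha\in\arc^{\circ}(V_{\sigma})_{\bn}(k)$ the formal neighborhood of $\arc(V_{\sigma})$ at $\alpha$ is isomorphic to $\cW(\bn,L')\compl{\otimes}_k k\dblbr{(T_i)_{i\in\N}}$. By Lemma \ref{lem:toric:reduction:complete:intersection:formal:rational}, $\cW(\bn,L')\cong\cW(\bn,\{\bell_q:d+1\le q\le h\})=\cW(\bn)$, so this becomes $\cW(\bn)\compl{\otimes}_k k\dblbr{(T_i)_{i\in\N}}$. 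It then remains to produce the open set: Lemma \ref{lemm:decr:mds:toric:set}(iv) shows that $\arc^{\circ}(V_{\sigma})_{\bn}$ is a nonempty open subset of $\mds_{\bn}$, and by \S\ref{ssec:toric:valuation} it contains a $k$-rational point, so we set $U_{\bn}:=\arc^{\circ}(V_{\sigma})_{\bn}$ and part (ii) follows for every $\alpha\in U_{\bn}(k)$.

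Finally, one should check that the two occurrences of $\cW(\bn)$ in (i) and (ii) are literally the same formal scheme; this is guaranteed by Remark \ref{rema:Wn}, which asserts that $\cW(\bn,L')$ is independent (up to isomorphism) of the choice of $L'\supseteq\{\bell_q:d+1\le q\le h\}$, so no compatibility issue arises between the $L'$ furnished by Theorem \ref{thm:japan} and the particular generating set $\{\bell_q:d+1\le q\le h\}$ used in Proposition \ref{prop:comparison}. I do not expect a serious obstacle here: every nontrivial step — the Hensel-type presentation, the Weierstrass manipulations identifying $\fka$ with $\fkb$, and the genericity of the arc — has already been isolated in Theorems \ref{theo:main:technical}, \ref{thm:comparison:technical} and \ref{thm:japan} and in Lemma \ref{lem:toric:hyp:formal:neighbourhood}. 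The one place requiring slight care is the passage from ``section ring isomorphic to'' to ``formal scheme isomorphic to'', i.e. checking that all the isomorphisms constructed are isomorphisms of $K$-formal schemes compatible with the $\compl{\otimes}$ structure; this is routine given that $\compl{\eps}$ is a $k[\uZ_{\leq d,\ge\acc{\bn}{\bm_i}}]$-algebra morphism and $K$ is the fraction field of that ring.
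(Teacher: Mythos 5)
Your proposal is correct and follows exactly the route of the paper: the paper's proof likewise takes $U_{\bn}=\arc^{\circ}(V_{\sigma})_{\bn}$ and combines corollary \ref{cor:toric:formal:neighbourhood}(i), proposition \ref{prop:comparison}, theorem \ref{thm:japan} and lemma \ref{lem:toric:reduction:complete:intersection:formal:rational}, precisely the assembly you describe. Your additional remarks (identification of $\kappa(\eta_{\bn})$ with $K$, independence of $L'$ via remark \ref{rema:Wn}, and the ``essentially of finite type'' clause) are just the bookkeeping the paper leaves implicit.
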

\begin{proof}
One takes $U_{\bn}:=\arc(V_{\sigma})^{\circ}_{\bn}$ and 
one combines  proposition \ref{prop:comparison},
theorem \ref{thm:japan}, lemma \ref{lem:toric:reduction:complete:intersection:formal:rational},
and corollary \ref{cor:toric:formal:neighbourhood}(i).
\end{proof}

\subsection{}\label{ssec:decomposition}
 An element $\bn\in N\setminus \{0\}$ is said to be {\em primitive} if it can not be
 written as $d\bn'$ where $\bn'\in N$ and $d$ is an integer $>1$.
  An element $\bn\in N\cap \sigma\setminus \{0\}$ is said to be
  {\em indecomposable} if it can not be written $\bn=\bn_1+\bn_2$ with
  $\bn_1,\bn_2\in N\cap \sigma\setminus \{0\}$. A decomposition of
  $\bn$ into indecomposable elements is a decomposition
  $\bn=\sum_{i=1}^r\bn_i$ where $r$ is a positive integer and the
  $\bn_i$'s are indecomposable elements in $N\cap \sigma\setminus \{0\}$; the {\em length} of such a
  decomposition is $r$.

\subsection{}\label{subsec:cor}  Using results of our previous work \cite{Bou-Seb:toric}, we
deduce, as a straightforward by-product of theorem \ref{thm:comparison}, the
following corollary. The result has been obtained independently by
Reguera using a different approach (see \cite{Reg:tor}).
\begin{corollary}\label{coro:dim:irrcomp}
Let $ \bn\in \sigma\cap N $ be a toric valuation of $V_{\sigma}$ and $\eta_{\bn}$ be
the generic point of the Nash set $\mds_{\bn}$.

Then there is a natural bijection between the set of irreducible components
of the formal neighborhood $\compl{\arc(V_{\sigma})_{\eta_{\bn}}}$ and the
set of decompositions of $\bn$ into a sum of indecomposable elements of the
semigroup $N\cap \sigma$.
The dimension of the component corresponding to a given decomposition of $\bn$ is the length of the decomposition.
In particular the dimension of $\compl{\arc(V_{\sigma})_{\eta_{\bn}}}$
is equal to the maximal length of such a decomposition of $\bn$.
\end{corollary}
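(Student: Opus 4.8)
The plan is to read the statement off theorem~\ref{thm:comparison} together with the explicit description of the finite formal model $\cW(\bn)$ obtained in \cite{Bou-Seb:toric}. By theorem~\ref{thm:comparison}(i) the formal neighborhood $\compl{\arc(V_{\sigma})_{\eta_{\bn}}}$ is isomorphic, as a formal $\kappa(\eta_{\bn})$-scheme, to $\kappa(\eta_{\bn})\widehat{\otimes}_{k}\cW(\bn)$, and (still by theorem~\ref{thm:comparison}(i)) it is the formal spectrum of a noetherian local ring, so that its dimension is the supremum of the dimensions of its irreducible components. Hence the two things I would check are: first, that passing from $\cW(\bn)$ to $\kappa(\eta_{\bn})\widehat{\otimes}_{k}\cW(\bn)$ changes neither the dimension nor the set of irreducible components; second, that \cite{Bou-Seb:toric} supplies the desired combinatorial description of the components of $\cW(\bn)$.

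For the first point, I would write $\cW(\bn)=\Spf(C)$ with $C=k\dblbr{\uw}/I$, where $\uw$ abbreviates the finite family of variables $\uZ_{\bullet,<\acc{\bn}{\bm_i}}$ and $I\subseteq\ide{\uw}$ (in fact $I$ is generated by the polynomials $\wt{\eps}(F_{\bell,s})$ of \S\ref{ssec:finite:formal:model:rational:arc}, which lie in $k[\uw]$). Then $\kappa(\eta_{\bn})\widehat{\otimes}_{k}C=\kappa(\eta_{\bn})\dblbr{\uw}/I\,\kappa(\eta_{\bn})\dblbr{\uw}$, and $C\to\kappa(\eta_{\bn})\widehat{\otimes}_{k}C$ is a flat local homomorphism of noetherian local rings whose closed fibre is $\kappa(\eta_{\bn})$, of dimension $0$. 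Since $\kappa(\eta_{\bn})=k(\uZ_{\leq d,\ge\acc{\bn}{\bm_i}})$ is purely transcendental over $k$, hence a regular field extension, this morphism is regular; a routine argument then shows that it preserves the dimension and that extension of ideals $\fkp\mapsto\fkp\,\kappa(\eta_{\bn})\dblbr{\uw}$ induces a bijection from the minimal primes of $C$ onto those of $\kappa(\eta_{\bn})\widehat{\otimes}_{k}C$, with $\dim(C/\fkp)=\dim\bigl(\kappa(\eta_{\bn})\dblbr{\uw}/\fkp\,\kappa(\eta_{\bn})\dblbr{\uw}\bigr)$ for each such $\fkp$. For the second point, \cite{Bou-Seb:toric} (combined with lemma~\ref{lem:toric:reduction:complete:intersection:formal:rational}, remark~\ref{rema:Wn} and theorem~\ref{thm:japan}) provides a natural bijection between the irreducible components of $\cW(\bn)$ and the decompositions of $\bn$ into a sum of indecomposable elements of $N\cap\sigma$, the component attached to a decomposition having dimension equal to its length. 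Combining the two points yields the asserted natural bijection and the dimension of each component; the last sentence then follows from the noetherianity recalled above, the dimension of the formal neighborhood being the maximum of the dimensions of its components.

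The hard part lies not in the present argument but in the precise input imported from \cite{Bou-Seb:toric}: what is needed is not the embedding-dimension formula of \opcit\ but a full description of the irreducible components of the \emph{formal} scheme $\cW(\bn)$ and of their dimensions in terms of indecomposable decompositions of $\bn$, phrased for the complete local ring $C$ itself rather than for a finite-type model of it (where completion could \emph{a priori} split branches). Once this is granted, the only extra ingredient is the elementary base-change observation above, which is exactly why corollary~\ref{coro:dim:irrcomp} is a ``straightforward by-product'' of theorem~\ref{thm:comparison}.
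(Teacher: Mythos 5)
Your proposal is correct and follows essentially the same route as the paper: the paper likewise deduces the corollary by combining theorem~\ref{thm:comparison} with the description of the irreducible components and dimensions of $\cW(\bn)$ given in section~(3) of the proof of \cite[Theorem 6.3]{Bou-Seb:toric} (noting only that the primitivity hypothesis there is not used). The one difference is that you spell out the base-change step $\cW(\bn)\rightsquigarrow\kappa(\eta_{\bn})\widehat{\otimes}_{k}\cW(\bn)$ (flatness plus the purely transcendental nature of $\kappa(\eta_{\bn})/k$ preserving minimal primes and dimensions), which the paper leaves implicit.
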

\begin{proof}
The fact that the conclusion holds for $\cW(\bn)$ 
is shown in section (3) of the proof of \cite[Theorem
6.3]{Bou-Seb:toric}. In the latter $\cW(\bn)$ is denoted by $\tilde{\mathscr{Y}}_n$.
Though it is assumed in the statement of the theorem that $\bn$ is
primitive, this is not used in the aforementioned section of the proof.
The corollary then follows from theorem \ref{thm:comparison}.
\end{proof}
\subsection{}
It remains to explain why Theorem \ref{theo:strong:essential} stated in the introduction
is a consequence of corollary \ref{coro:dim:irrcomp}.
Let $\bn\in N\cap \sigma$ be a primitive integral point
representing a toric valuation of multiplicity $1$ and assume that
$v$ is  centered in the singular locus of $V_{\sigma}$.
Then, by \cite[Theorem 1.10]{BouGon} and \cite[Theorem 1.2]{Bou:divess},
$\bn$ is indecomposable if and only if $v$ is a strongly essential
valuation, in the sense given in the introduction.
Thus theorem \ref{theo:strong:essential} is indeed a consequence of
corollary \ref{coro:dim:irrcomp} (using again the proof of \cite[Theorem 6.3]{Bou-Seb:toric}).

\subsection{}\label{subsec:exam} We end this section with an explicit
example of computation of the formal neighborhood of the generic point
of the Nash set associated with a toric valuation. See
\cite{MMC:PhD} for more details.

Let $ N=M=\Z^{2} $, $ \sigma $ be the cone of $ \R^2 $ generated by $ (1,0) $ and $ (1,2)$,
and  $ V_{\sigma} $  be the associated affine toric variety. 
The semigroup $ S_{\sigma} $ is minimally generated by $ m_1=(0,1) $,
$ m_2=(1,0) $ and $ m_3=(2,-1)$. We observe that $ m_1 $ and $ m_2 $
form a $ \Z $-basis of $ M $ and the relation $ m_1+m_3=2m_2 $ generates all the relations between elements of $ S_{\sigma} $.
Thus, setting $F:=Z_1Z_3-Z_2^2$,  the ideal of $ V_{\sigma} $ in $k[Z_1,Z_2,Z_3]$
is the ideal generated by $F$. The ideal of $\arc(V_{\sigma})$ in the ring $ k[\uZ_{\infty}]=k[Z_{1,s},Z_{2,s},Z_{3,s};s\in \N]$
is generated by $\{F_s;s\in \N\}$ where $F_{s}=\sum\limits_{r=0}^{s}(Z_{1,s-r}Z_{3,r}-Z_{2,s-r}Z_{2,r})$.

We now consider the toric valuation $\ord_{\bn}$ of $V_{\sigma}$
corresponding to $\bn=(1,1)\in \sigma\cap N$. The prime ideal of
$k[\uZ_{\infty}]$ corresponding to the generic point $\eta_{\bn}$ of the
Nash set associated with $\ord_{\bn}$ is the radical of the ideal $\ide{Z_{1,0}, Z_{2,0}, Z_{3,0}}$.
The residue field of $\eta_{\bn}$ is isomorphic to $K:=k(Z_{1,s},Z_{2,s};s\ge 1)$.

Denote by $\{z_{3,s}; s\geq 1\}$ the unique family of elements of $K$ such that
for every $s\geq 2$, one has
\[
\sum\limits_{r=1}^{s-1}\left(Z_{1,s-r}\cdot z_{3,r}-Z_{2,s-r}Z_{2,r}\right)=0.
\]
Note that the latter is a triangular invertible $K$-linear system in the $z_{3,s}$'s.

Now let $\{\cZ_{3,r};r\geq 1\}$ be the unique family of elements of $K\dblbr{Z_{1,0}, Z_{2,0}, Z_{3,0}}$
such that
\begin{enumerate}
\item for every $s\geq 1$, one has $\cZ_{3,s}=z_{3,s}\pmod{\ide{Z_{1,0}, Z_{2,0}, Z_{3,0}}}$;
\item for every $s\geq 2$, one has
  \[
  Z_{1,s}Z_{3,0}-Z_{2,0}Z_{2,s}+\sum\limits_{r=1}^{s}\left(Z_{1,s-r}\cdot\cZ_{3,r}-Z_{2,s-r}Z_{2,r}\right)=0.
  \]
\end{enumerate}
Explicit truncations of the series $\cZ_{3,s}$ may be obtained
by applying effectively the Hensel's lemma, in other words by successive
approximations, though explicit computations quickly become cumbersome. For example one has
\[
\cZ_{3,1}=\frac{Z_{2,1}^2}{Z_{1,1}}+\frac{Z_{1,0}Z_{1,2}Z_{2,1}^2}{Z_{1,1}^3}-\frac{2Z_{1,0}Z_{2,1}Z_{2,2}}{Z_{1,1}^2}-\frac{Z_{1,2}Z_{3,0}}{Z_{1,1}}+\frac{2Z_{2,0}Z_{2,2}}{Z_{1,1}}
\pmod{\ide{Z_{1,0}, Z_{2,0}, Z_{3,0}}^2}
\]
and
\begin{align*}
\cZ_{3,2}=&-\frac{Z_{1,2}Z_{2,1}^2}{Z_{1,1}^2}+\frac{2Z_{2,1}Z_{2,2}}{Z_{1,1}}-\frac{2Z_{1,0}Z_{1,2}^2Z_{2,1}^2}{Z_{1,1}^4}+\frac{4Z_{1,0}Z_{1,2}Z_{2,1}Z_{2,2}}{Z_{1,1}^3}\\&+\frac{Z_{1,0}Z_{1,3}Z_{2,1}^2}{Z_{1,1}^3}-\frac{Z_{1,0}Z_{2,2}^2}{Z_{1,1}^2}-\frac{2Z_{1,0}Z_{2,1}Z_{2,3}}{Z_{1,1}^2}+\frac{Z_{1,2}^2Z_{3,0}}{Z_{1,1}^2}-\frac{2Z_{1,2}Z_{2,0}Z_{2,2}}{Z_{1,1}^2}\\&-\frac{Z_{1,3}Z_{3,0}}{Z_{1,1}}+\frac{2Z_{2,0}Z_{2,3}}{Z_{1,1}}\pmod{\ide{Z_{1,0},
    Z_{2,0}, Z_{3,0}}^2}.
\end{align*}
Then the formal neighborhood of $\eta_{\bn}$ in $\arc(V_{\sigma})$ is isomorphic to the
formal spectrum of
\[
K\dblbr{Z_{1,0}, Z_{2,0}, Z_{3,0}}/\ide{Z_{1,0}Z_{3,0}-Z_{2,0}^2,Z_{1,1}Z_{3,0}+Z_{1,0}\cZ_{3,1}-2Z_{2,0}Z_{2,1}}.
\]
Note that it is not clear that the latter is the completion of an
essentially of finite type local $K$-algebra.

\subsection{} Using our comparison theorem, the computation of the
formal neighborhood of $\eta_{\bn}$ in $\arc(V_{\sigma})$
may also be done in the following much more straightforward way. First we compute the formal scheme
$\cW(\bn)$ defined in \ref{ssec:finite:formal:model:rational:arc}.
We have the following equality in $k[Z_{1,0},Z_{2,0},Z_{3,0},t]$:
\[
F|_{Z_j=t+Z_{j,0}}=(t+Z_{1,0})(t+Z_{3,0})-(t+Z_{2,0})^2=(Z_{1,0}+Z_{3,0}-2Z_{2,0})t+Z_{1,0}Z_{3,0}-Z_{2,0}^2.
\]
We deduce that 
\[
\cW(\bn)=\Spf\left(\dfrac{k\dblbr{Z_{1,0},Z_{2,0},Z_{3,0}}}{\ide{Z_{1,0}Z_{3,0}-Z_{2,0}^2,Z_{1,0}+Z_{3,0}-2Z_{2,0}}}\right)
\]
and that the formal neighborhood of $\eta_{\bn}$ in $\arc(V_{\sigma})$ is
isomorphic to 
\[
\Spf\left(\dfrac{K\dblbr{Z_{1,0},Z_{2,0},Z_{3,0}}}{\ide{Z_{1,0}Z_{3,0}-Z_{2,0}^2,Z_{1,0}+Z_{3,0}-2Z_{2,0}}}\right).
\]
In addition, it is not difficult to see that $\cW(\bn)$ is isomorphic
to $\Spf(k\dblbr{Z_{1,0},Z_{2,0} }/\ide{Z_{1,0}^2})$.

\bibliographystyle{amsplain}
\bibliography{Toric}

\end{document}